\newtheorem{thm}{Theorem}[section]
\newtheorem{prop}[thm]{Proposition}
\renewcommand{\lpsubjectto}{\normalfont s.t.}
\begin{document}

\title[Exact algorithms for BPCCSPs]{Exact algorithms for budgeted prize-collecting covering subgraph problems}



\author[N. Morandi, R. Leus, and H. Yaman]{Nicola Morandi, Roel Leus, and Hande Yaman}
\email{nicola.morandi@kuleuven.be}
\email{roel.leus@kuleuven.be}
\address{ORSTAT, Faculty of Economics and Business, KU Leuven, Belgium}
\email{hande.yaman@kuleuven.be}

\begin{abstract}
We introduce a class of budgeted prize-collecting covering subgraph problems.  For an input graph with prizes on the vertices and costs on the edges, the aim of these problems is to find a connected subgraph such that the cost of its edges does not exceed a given budget and its collected prize is maximum. A vertex prize is collected when the vertex is visited, but the price can also be partially collected if the vertex is covered, where an unvisited vertex is covered by a visited one if the latter belongs to the former's neighbourhood. A capacity limit is imposed on the  number of vertices that can be covered by the same visited vertex. Potential application areas include network design and intermodal transportation.  We develop a branch-and-cut framework and a Benders decomposition for the exact solution of the problems in this class.  We observe that the former algorithm results in shorter computational times on average, but also that the latter can outperform the former for specific instance settings.
Finally, we validate our algorithmic frameworks for the cases where the subgraph is a tour and a tree, and for these two cases we also identify novel symmetry-breaking inequalities.
\end{abstract}

\maketitle

\section{Introduction}
According to a recent report by Capgemini~\cite{lastmiledelivery}, last-mile delivery accounts for up to $41\%$ of overall supply chain costs, and this proportion is only expected to rise, as the volume of retail e-commerce sales is forecast to see a yearly increase of between $14\%$ and $18\%$ worldwide \cite{ecommerce}. In the ordering process, customers are often asked to choose between a more expensive home delivery and a cheaper delivery to a nearby pick-up location such as a postal office or a supermarket. The latter scenario leads to a two-layered ring-star network, where the upper-layer ring represents the trajectory of the delivery truck, and all customers' paths to a collection point form a disjoint union of stars as the lower layer. A relevant objective in the design of such a supply network is the cost of the network, and the corresponding minimization problem is known as the Ring-Star problem (RSP) \cite{Labbe04}.

Popular fast delivery options impose a strict schedule on the truck drivers, and put a significant strain on the logistics providers.  In such a context, visiting all the customers that desire home delivery at the minimum cost might not always be feasible. In Belgium, for instance, the leading postal-service operator BPost announced in November 2020, following a surge in online purchases, that it might be unable to deliver all packages to the addressee at home, forcing them instead to pick up the package at one of the 2300 pick-up points across the country \cite{brustimes}.  The profit associated with a customer that is referred to a pick-up point rather than served directly, can be considered to be only a certain fraction of the profit in case of home delivery, mainly due to a loss of goodwill from the part of the customer. Specifically for this setting, a more accurate objective is the total collected profit; the resulting maximization problem is known as the Time Constrained Maximum Covering Salesman problem  (TCMCSP) \cite{Najiazimi14, Ozbaygin2016}.

A closely related scenario arises in telecommunication services when high-speed broadband connection is brought to the customers via a tree-star network, whose upper layer is designed with a fiber-to-the-curb architecture. When the infrastructure's cost is the main concern for the providers, a number of problems arise that are known in the literature as tree-star problems \cite{Leitner17}. In particular, when further constraints are imposed on the size of the stars in the lower layer, the latter problems reduce  to the capacitated connected facility location problem \cite{Leitner11}.

In this article, we introduce the class of Budgeted Prize-Collecting Covering Subgraph problems (BPCCSPs), which generalize the TCMCSP by the inclusion of constraints on the size of the lower-layer stars, and which also include a budgeted prize-collecting variant of the tree-star problems.
Furthermore, in the class of BPCCSPs we allow for any type of connected upper-layer subgraph, with a tree and a tour as special cases.
The contribution of this paper consists in generalizing existing MIP (mixed-integer programming) models in order to solve the BPCCSPs and in designing a branch-and-cut framework for finding optimal solutions.  We validate our methods on realistic-sized instances for the most common cases where the subgraph is a tour and a tree.  We also develop a Benders decomposition as a viable alternative to the branch-and-cut for specific 
instance settings.  Finally, we introduce novel symmetry-breaking inequalities for the tree and the tour case.

The remainder of this paper is structured as follows.  In Section 2, we first provide an overview of the related literature. In Section 3, we describe our branch-and-cut and Benders decomposition algorithms. A detailed discussion of the tree and the tour case is given in Sections 4 and 5, respectively. We present and discuss our computational results in Section~6, and we conclude in Section 7.

\section{Related literature}
\subsection{Budgeted prize-collecting problems}
\emph{Prize-collecting} problems on a graph arise when visiting all the vertices of the graph is not desirable or feasible.  Prizes are then associated to the vertices, and the optimization needs to strike a balance between the routing costs and the collected prizes. The first example dates back to the work by Segev \cite{Segev87}, who introduced the node-weighted Steiner tree problem, in which a tree is to be found that minimizes the total cost of the edges minus the total prize of the vertices. Shortly after, Balas \cite{Balas89} introduced the Prize-collecting TSP, aiming at finding a tour that minimizes the total cost of its edges plus the prize of the unvisited vertices. Following the insights provided by the survey by Feillet et al.~\cite{Feillet05}, we distinguish between four approaches in the literature to model this trade-off between costs and prizes.
\begin{enumerate}
\item[i.] In a prize-collecting problem, a linear combination of routing costs and penalties for not visiting some vertices is usually minimized. An early example of this approach is the Profitable Tour problem by Dell'Amico et al.~\cite{Dell'amico95}. Equivalently, uncollected prizes can be minimized in the objective instead of penalties. We find this approach, among other studies, in Ljubi\'c et al.~\cite{Ljubic06} for the prize-collecting Steiner tree problem (PCSTP).
\item[ii.] Another natural approach resorts to \emph{bi-objective} models. Early examples are Keller \cite{Keller85} and Keller and Goodchild~\cite{Goodchild88}. More recently, Leitner et al.~\cite{Leitner13, Leitner15} proposed several methods to find Pareto-optimal solutions to the PCSTP\@. Specifically for rooted tree graphs, Coene et al.~\cite{Coene13} provided a FPTAS to find all the efficient solutions to an analogous bi-objective problem.
\item[iii.] A \emph{quota} constraint requires a minimum total prize to be collected. This type of constraint was investigated, among others, by Haouari et al.~\cite{Haouari10} while assessing the strength of MIP formulations for the PCSTP, and by Ljubi\'c et al.~\cite{Ljubic14} while solving a prize-collecting problem in network design.
\item[iv.] Prize-collecting problems can also feature a \emph{budgeted} variant, where the collected prize of the visited vertices is maximized, while a budget on the total routing cost is imposed. The first example of this variant is the Orienteering problem (OP) by Tsiligirides \cite{Tsiligirides84} and by Laporte and Martello \cite{Laporte90}, which was effectively solved via a branch-and-cut algorithm in Fischetti et al.~\cite{Fischetti98}. Johnson et al.~\cite{Johnson00} introduced the budgeted variant of the PCSTP, while more recently Paul et al.\ \cite{Paul20} described a $2$-PTAS for the case with unit profits.
Costa et al.~\cite{Costa09} studied a budgeted prize-collecting tree problem with additional hop constraints, in  which a solution vertex should not be more than a given number of solution edges away from the depot. Sinnl and Ljubi\'c \cite{Sinnl16} solved this problem using a MIP formulation involving layered graphs.
\end{enumerate}
Finally, for relevant studies on prize-collecting problems with disconnected, directed or generalized spanning subgraphs, which do not belong to the scope of this work, we refer the reader to \cite{Alvarez13,Bateni12,chimani07, Golden08,Prodon10}.
\subsection{Coverage in network design, routing, and facility location}
The idea of \emph{coverage} appears in problems at the intersection of network design and facility location with routing. 
For an overview of both capacitated and uncapacitated facility location problems, we refer the reader to the work by Verter \cite{Verter11} and references therein. We identify six different groups of related literature that approach this intersection 
from different and equally interesting angles.
\begin{enumerate}
\item[i.] An early example of coverage in routing was introduced as a variant of the TSP by Current \cite{Current89}, who named it the Covering Salesman problem (CSP) and proposed a heuristic algorithm for its solution. In the CSP, all the vertices have to be either visited by the tour or be such that their distance from a visited vertex is not greater than a given value. Shortly after, Gendreau et al.\ \cite{Gendreau97} generalized the CSP by introducing the Covering Tour problem, where some vertices have to be visited and some other ones have to be covered.
\item[ii.] The Maximum Covering Cycle problem (MCCP) aims at finding a simple cycle in a non-complete undirected graph such that it maximizes the sum of the number of its vertices and of those adjacent to it. It was introduced by Grosso et al.~\cite{Grosso16}; in this latter work, it was also shown that the MCCP is NP-complete, and an exact constraint generation approach was developed and tested both on instances from the Hamiltonian Cycle problem and on randomly generated ones. More recently, \'Alvarez-Miranda and Sinnl\ \cite{Alvarez20} identified valid and lifted inequalities for the MCCP, and designed an effective branch-and-cut framework for its solution, reporting exact solutions for large instances in just a few seconds.
\item[iii.] When a facility is extensive, i.e., it cannot be modelled as a single vertex, its coverage capability is captured by its \emph{accessibility}, defined as the sum of the distances from each customer to the facility, with inverted sign. Labb\'e et al.\ \cite{Labbe98} proposed a bi-objective study in which the network cost is minimized and its accessibility maximized simultaneously. A bi-objective approach was also proposed by Current \cite{Current94} while introducing the Median Tour problem, which aims at finding the Pareto-efficient tours made by $p$ vertices out of the total $n$. A single-objective minimum cost approach has instead been studied by Labb\'e et al.\ \cite{Labbe05} for median cycles with a lower bound on the accessibility.
\item[iv.] Two-layer network design problems, where the lower layer is a star, also provide a way to model coverage. The RSP seeks to find an optimal ring such that its edge cost plus the total cost of assigning all the vertices outside of the ring to the cheapest vertex on the ring is minimum. Labb\'e et al.\ \cite{Labbe04} solved the RSP exactly and identified facet-defining inequalities for the convex hull of the feasible solutions. Lee et al.\ \cite{Lee98} developed a branch-and-cut algorithm to solve a Steiner variant of the RSP, where terminal nodes are considered. Baldacci et al.\ \cite{Baldacci07} proposed a branch-and-cut approach to the $m$-rings capacitated variant of the RSP, where a capacity is assigned to every ring. This latter variant is solved via a branch-and-cut-and-price algorithm by Hoshino and de Souza in \cite{Hoshino12}. The upper ring layer of the RSP can be relaxed to a $2$-edge-connected subgraph, and facet-defining inequalities for the resulting problem were identified by Fouilhoux et al.\ \cite{Fouilhoux12}, in a work where furthermore a branch-and-cut algorithm was devised. On the other hand, the Tree Star analogous problem, where the upper layer of the network is a tree, was effectively solved by Leitner et al.\ \cite{Leitner17} via an algorithmic framework that is also capable of solving closely related problems such as the Rent-or-Buy and the Connected Facility Location problem. Lee et al.\ \cite{Lee96} solved the corresponding Steiner variant of the problem, where a subset of terminal vertices needs to be spanned by the upper-layer Steiner tree.
\item[v.] The Time-Constrained  Maximal Covering Salesman problem (TCMCSP) was introduced by Naji-Azimi et al.\ \cite{Najiazimi14}. It distinguishes between customer vertices and facility vertices, and seeks to find a tour through the facility vertices such that its routing cost is not greater than a given budget, and it maximizes the number of customers who are not further away from the closest visited facility than a given value. The TCMCSP in its original form does not involve prizes, but it was extended to a budgeted prize-collecting problem by Ozbaygin et al.\ \cite{Ozbaygin2016}, who introduced the TCMCSP with partial cover: no distinction is made between customers and facilities, and every vertex has its own prize, to be collected entirely if the vertex is in the tour, or instead partially if the vertex lies within a given distance from a vertex in the tour. The TCMCSP with partial cover is closely related to our work; our model is a generalization of the one studied in \cite{Ozbaygin2016}, in which we incorporate capacities into the lower coverage layer and a non-tour subgraph as the upper visiting one.
\item[vi.] Lastly, the Connected Facility Location (conFL) problem arises when a graph consists of facility vertices and customer vertices, and costs are associated to opening facilities, assigning customers to facilities and connecting the facilities. Customers need to be assigned to facilities, and facilities in turn need to be connected via a Steiner tree whose cost is minimum. Karget and Minkoff \cite{Karget00} first introduced the problem, and Gupta et al.\ \cite{Gupta01} coined the name conFL. A hybrid metaheuristic for conFL, together with an exact method to assess its quality, was provided by Ljubi\'c \cite{Ljubic07} and validated on instances obtained by combining facility location and Steiner tree instances. Gollowitzer and Ljubi\'c \cite{Gollowitzer11} compared different models for conFL and proposed two branch-and-cut algorithms. A branch-and-cut and a branch-and-cut-and-price algorithm were devised by Leitner and Raidl \cite{Leitner11} to solve the Capacitated conFL, with capacities on the facilities and prizes on the customers. Customers can potentially be unassigned and their corresponding uncollected prize is minimized in the objective function. Gollowitzer et al.~\cite{Gollowitzer13} proposed a cutting-plane algorithm for the case where capacities are also imposed on the edges connecting the facilities, and not only on the facilities themselves.
\end{enumerate}

\section{
Problem statement and solution methods}
\subsection{Problem statement}
Let $G=\left( V, E\right)$ be a graph, where $V$ denotes the set of vertices and $E$ the set of edges. The vertex $0\in V$ is the \emph{depot}. We are given a budget $L>0$ and a cost $\ell_e\geq 0$ for each edge $e\in E$. For any vertex $v\in V$, the \emph{neighbourhood}  $N_{v}$ is the subset of vertices in $V\smallsetminus\left\lbrace v\right\rbrace$ 
that can cover vertex $v$. We denote by $c_v$ the \emph{coverage capacity} of vertex $v$.

Our aim is to find a connected subgraph of $G$ such that the total cost of its edges is not greater than $L$. Additional requirements such as degree constraints can be imposed on this subgraph. The vertices of the subgraph are called  \emph{visited} vertices; the depot must be visited. Each unvisited vertex can be covered by at most one visited vertex in its neighbourhood, while the number of vertices covered by a given visited vertex~$v$ cannot exceed its coverage capacity~$c_v$.

The prize earned by visiting vertex $v\in V$ is denoted by $p_v$ and the prize resulting from covering vertex $v \in V\smallsetminus\left\lbrace 0\right\rbrace$ via vertex $w\in N_{v}$ is $q_{vw}$. We assume that all prizes are non-negative. A BPCCSP seeks to find a feasible connected subgraph that maximizes the collected prize.  This problem statement is generic and leads to a class of different BPCCSPs, dependent on the constraints imposed on the constructed subgraph. 
\subsection{
MIP model}
We use the following binary variables to model a BPCCSP: 
\begin{align*}
\phantom{z_{vw}=}
&\begin{aligned}
\mathllap{x_{e}=} & \begin{cases} 1 & $if $e\in E$ is used in the subgraph$, \\ 0 & $otherwise,$\end{cases}
\end{aligned}\\
&\begin{aligned}
\mathllap{y_{v}=} &\begin{cases} 1 & $if $v\in V\smallsetminus\left\lbrace 0\right\rbrace $ is visited$, \\ 0 & $otherwise,$\end{cases}
\end{aligned}\\
&\begin{aligned}
\mathllap{z_{vw}=} &\begin{cases} 1 & $if $v\in V\smallsetminus\left\lbrace 0\right\rbrace$ is covered by $w\in N_{v},\\ 0 & $otherwise.$\end{cases}
\end{aligned}
\end{align*}
For any set $S\subset V$, let $\delta\left( S\right)=\left\lbrace e \in E :|e\cap S|=1\right\rbrace$. A BPCCSP can be modeled as follows:
\begin{lpformulation}[]
\lpobj{\normalfont max}{\sum_{v\in V\smallsetminus\left\lbrace 0\right\rbrace}\left( p_{v}\; y_{v}+\sum_{w\in N_{v}}q_{vw}\; z_{vw}\right)}
\lpeq{\label{glp:length_threshold}\sum_{e\in E}\ell_{e}x_{e}\leq L}{}
\lpeq{\label{glp:visorcov}y_{v}+\sum_{w\in N_{v}}z_{vw}\leq 1&&}{v\in V\smallsetminus\left\lbrace 0\right\rbrace}
\lpeq{\label{glp:coverage}\sum_{v\in V\smallsetminus\left\lbrace 0\right\rbrace\; :\; w\in N_{v}}z_{vw}\leq \begin{cases}c_{w}y_{w}& w\neq 0\\ c_{0}& w=0\end{cases}&&}{w\in V}
\lpeq{\label{glp:liftednconn}
\sum_{e\in\delta\left( S\right)}x_{e}\geq y_{v}+\sum_{w\in N_{v}\cap S}z_{vw} &&}{S\subseteq V\smallsetminus\left\lbrace 0\right\rbrace, \; v\in S}
\lpeq{\label{tree:xy}x_{\left\lbrace v,w\right\rbrace}\leq y_{v} &&}{\left\lbrace v,w\right\rbrace\in E}
\lpeq{\label{glp:subgraph}  \left(\mbox{additional constraints on }x \mbox{ and } y \right)&&}{ }
\lpeq{x_{e}\in\left\lbrace 0,1\right\rbrace&&}{e\in E}
\lpeq{y_{v}\in\left\lbrace 0,1\right\rbrace&&}{v\in V\smallsetminus\left\lbrace 0\right\rbrace}
\lpeq{\label{glp:end}z_{vw}\geq 0&&}{v\in V\smallsetminus\left\lbrace 0\right\rbrace,\; w\in N_{v}}
\end{lpformulation}

Constraint \eqref{glp:length_threshold} imposes the budget $L$ on the total cost of the edges in the subgraph. Constraints \eqref{glp:visorcov} ensure that a vertex cannot be visited and covered at the same time and if it is covered, then it can be covered only by a single vertex. Constraints \eqref{glp:coverage} make sure that an unvisited vertex does not cover any other vertex and that a visited vertex $w$ covers at most $c_{w}$ vertices. Constraints \eqref{glp:liftednconn} impose connectivity; this requirement follows from the assumption that there is a single depot.  These constraints are analogous to those that 
appear, among other studies, in Ozbaygin et al.\ \cite{Ozbaygin2016}, Gollowitzer and Ljubi\'c \cite{Gollowitzer11}, and Fouilhoux et al.\ \cite{Fouilhoux12}. Constraints \eqref{tree:xy} prevent an edge from being selected if one of its endpoints is not visited. Additional constraints \eqref{glp:subgraph} can be imposed on the visited vertices and the edges of the subgraph. The remaining constraints are variable restrictions. 

Note that we do not constrain the $z$ variables to be binary. Indeed, observe that for $S\subseteq V\smallsetminus \{0\}$ and $v\in S$, if $\sum_{e\in \delta\left( S\right)}x_{e} - y_{v} =0\:$ then the corresponding constraint \eqref{glp:liftednconn} forces $z_{vw}$ to be zero for any $w\in N_{v} \cap S$. On the other hand, if $\sum_{e\in \delta\left( S\right)}x_{e} - y_{v} \geq 1\:$ then constraint \eqref{glp:liftednconn} is redundant. Consequently, when $x$ and $y$ are given, the remaining problem of finding the best $z$ is a transportation problem. Hence, there exists an optimal solution to our model with binary $z$.

This MIP model generalizes the model of \cite{Ozbaygin2016} by replacing its single-indexed coverage variables $z_v$  with the double-indexed $z_{vw}$ defined above and by allowing non-tour visiting subgraphs via the generic constraints \eqref{glp:subgraph}. We believe that this generalization is certainly worth investigating, as it opens the door to the study of more balanced coverage layers and of non-tour solution subgraphs, e.g., the trees, that pose new challenges in terms of the solutions' symmetry. Indeed, depending on the subgraph type and on the specific constraints \eqref{glp:subgraph} used to impose this type of requirement, this MIP can have symmetric optimal solutions, which only differ in the values of the $x$ variables. In particular, there might be multiple feasible subgraphs whose cost does not exceed the budget and that contain the same vertices. With a slight abuse of terminology, we refer to this as ``symmetry''. We identify symmetry-breaking inequalities for subgraphs that are trees and tours in Sections 4 and 5, respectively.
\subsection{Branch-and-cut framework}
In this subsection, we propose a branch-and-cut framework based on the separation of inequalities \eqref{glp:liftednconn}, analogously to the schemes proposed by Fischetti et al.\ \cite{Fischetti98} and Ozbaygin et al.\ \cite{Ozbaygin2016}. The framework can be described as follows:
\begin{enumerate}
\item[i.] We initialize the formulation with only a subset of the inequalities \eqref{glp:liftednconn}, namely for every $v\in V$ and with $S = N_{v}\cup\left\lbrace v\right\rbrace\smallsetminus\left\lbrace 0\right\rbrace$.
\item[ii.] At any given node of the branch-and-cut tree, with either fractional or integral solutions, we define the support graph $\Gamma$ that consists of all the vertices $v$ and the edges $e$ such that, respectively, $y_{v}>0$ and $x_{e}>0$. If $\Gamma$ is disconnected, then for every connected component $S$ that does not contain the depot and for every $v\in S$, we add the corresponding inequality \eqref{glp:liftednconn}. Let $S_{0}$ be the connected component containing the depot. We then also add the connectivity constraints corresponding to the set $S'=V\smallsetminus S_{0}$ and the vertices $v\in S'$.
\item[iii.] At the root node, if the solution is fractional and if the support graph $\Gamma$ is connected, we set the weight of each edge $e$ in $\Gamma$ as its solution value $x_{e}$ and we then compute a global min-cut. We choose the side $S$ of the min-cut that does not contain the depot. We then check, for every vertex $v\in S$, whether the corresponding inequality \eqref{glp:liftednconn} is violated and if so, we add it.
\end{enumerate}
\subsection{A Benders decomposition for the case with independent prizes}
For any vertex pair $v\in V\smallsetminus\left\lbrace 0\right\rbrace$ and $w\in N_{v}$, we say that the coverage prize $q_{vw}$ is \emph{independent} if it does not depend on the covering vertex $w$, i.e., if $q_{vw}=q_{v}$ 
and only depends on $v$.
For this special case, we propose a Benders decomposition whose master problem is obtained from model \eqref{glp:length_threshold}-\eqref{glp:end} by projecting out the $z$ variables.

We first add new continuous variables $\theta_{v}\geq 0$ for any $v\in V\smallsetminus\left\lbrace 0\right\rbrace$, and $\eta_{v}\geq 0$ for any $v\in V\smallsetminus\left\lbrace 0\right\rbrace$ such that $0\in N_{v}$ to the model \eqref{glp:length_threshold}-\eqref{glp:end}. These variables variables have the following interpretation: $\theta_{v}$ indicates whether vertex $v\in V\smallsetminus\left\lbrace 0\right\rbrace$ is covered by some non-depot vertex or not, as in Ozbaygin et al. \cite{Ozbaygin2016}, while $\eta_{v}$ indicates whether vertex $v$ is covered by the depot or not. We can then relate these variables to the former $z$ variables as follows:
\begin{align}
\theta_{v}&=\sum_{w\in N_{v}\smallsetminus\left\lbrace 0\right\rbrace} z_{vw} &\forall v\in V\smallsetminus\left\lbrace 0\right\rbrace\\
\eta_{v}&=z_{v0} &\forall v\in V\smallsetminus\left\lbrace 0\right\rbrace\; :\; 0\in N_{v}
\end{align}
These relations allow us to refrain from imposing integrality constraints on vectors $\theta$ and $\eta$. Then, we impose the coverage capacity on the depot by making use of the $\eta$ variables:
\begin{align}
\sum_{v\in V\smallsetminus\left\lbrace 0\right\rbrace\; :\; 0\in N_{v}}&\eta_{v}\leq c_{0}
\end{align}
and we update constraints \eqref{glp:visorcov} as follows:
\begin{align}
y_{v}+\theta_{v} & +\eta_{v}\leq 1 & \forall v\in V\smallsetminus\left\lbrace 0\right\rbrace\; :\; 0\in N_{v}\\
y_{v}+\theta_{v} & \leq 1 & \forall v\in V\smallsetminus\left\lbrace 0\right\rbrace\; :\; 0\in V\smallsetminus N_{v}
\end{align}
In the master problem of the branch-and-cut framework above, the connectivity constraints \eqref{glp:liftednconn} identified in step~i. are restated as follows:
\begin{align}
&\sum_{e\in\delta\left( N_{v}\cup\left\lbrace v\right\rbrace\smallsetminus\left\lbrace 0\right\rbrace\right)}x_{e}\geq  y_{v}+\theta_{v} &\forall  v\in V\smallsetminus\left\lbrace 0\right\rbrace
\end{align}
We project the variables $z$ out of the model, and for every optimal solution $\left( y^{*}, \theta^{*}\right)$ we get a Benders feasibility subproblem as follows:
\begin{align}
\label{substart}&\sum_{w\in N_{v}\smallsetminus\left\lbrace 0\right\rbrace}z_{vw}= \theta^{*}_{v}&\forall v\in V\smallsetminus\left\lbrace 0\right\rbrace\\
&\sum_{v\in V\smallsetminus\left\lbrace 0\right\rbrace\; :\; w\in N_{v}}z_{vw}\leq c_{w}y^{*}_{w}& \forall w\in V\smallsetminus\left\lbrace 0\right\rbrace\\
\label{subend}&z_{vw}\geq 0&\forall v\in V\smallsetminus\left\lbrace 0\right\rbrace,\;w\in N_{v}\smallsetminus\left\lbrace 0\right\rbrace
\end{align}
We then update the objective of the resulting master problem to:
\begin{align}
\sum_{v\in V\smallsetminus\left\lbrace 0\right\rbrace}\left( p_{v}y_{v} + q_{v}\theta_{v}\right) + \sum_{v\in V\smallsetminus\left\lbrace 0\right\rbrace\; :\; 0\in N_{v}} q_{v}\eta_{v}
\end{align}
Finally, we solve the master problem in the branch-and-cut framework above, where at each node we also solve the Benders feasibility subproblem \eqref{substart}-\eqref{subend}. If this subproblem is infeasible at a node, we retrieve an improving direction $\left( u^{\theta}, u^{y}\right)$ for its dual and add a feasibility cut of the form:
\begin{align}
\label{glp:bendersfeas}
\sum_{v\in V\smallsetminus\left\lbrace 0\right\rbrace} \left( u^{\theta}_{v}\cdot\theta_{v}+u^{y}_{v}\cdot y_{v}\right)\geq 0
\end{align}

\section{The tree BPCCSP}
In this section, we discuss how to adapt the generic model \eqref{glp:length_threshold}-\eqref{glp:end} to the tree subgraph case and we introduce ad-hoc symmetry-breaking inequalities. Two optimal BPCCSP solution trees on the same instance are shown in Figure \ref{figure_Belgium_tree}, when all the coverage capacities $c_{v}$ are set to zero or two, respectively. This small example shows that the subgraph can change significantly when coverage is taken into account.

\begin{figure}[t]
\centering
\begin{adjustbox}{valign=t,minipage={.45\textwidth}}
\centering
\begin{tikzpicture}[scale=0.45]
\tikzstyle{every node}=[draw,circle,fill=white,minimum size=4pt,
                            inner sep=0pt]

\draw (4.22,	2.67) node (1) [overlay] {};
\draw (5.24,	2.91) node (2) [overlay] {};
\draw (8.00,	1.43) node (3) [overlay] {};
\draw (4.36,	5.00) node (4) [overlay] {};
\draw (7.16,	3.16) node (5) [overlay] {};
\draw (0.00,	4.97) node (6) [overlay] {};
\draw (0.98,	2.53) node (7) [overlay] {};
\draw (2.37,	4.06) node (8) [overlay] {};
\draw (0.88,	4.93) node (9) [overlay] {};
\draw (5.70,	0.30) node (10) [overlay] {};
\draw (4.49,	0.00) node (11) [overlay] {};
\draw (3.05,	0.27) node (12) [overlay] {};

\draw (9) -- (6) [very thick, color=red];
\draw (9) -- (7) [very thick, color=red];
\draw (7) -- (12) [very thick, color=red];
\draw (12) -- (1) [very thick, color=red];
\end{tikzpicture}
\begin{center}BPCCSP with $c_{v}=0\;\;\;\forall v\in V$\end{center}
\end{adjustbox}\hfill
\begin{adjustbox}{valign=t,minipage={.45\textwidth}}
\centering
\begin{tikzpicture}[scale=0.45]
\tikzstyle{every node}=[draw,circle,fill=white,minimum size=4pt,
                            inner sep=0pt]

\draw (4.22,	2.67) node (1) [overlay] {};
\draw (5.24,	2.91) node (2) [overlay] {};
\draw (8.00,	1.43) node (3) [overlay] {};
\draw (4.36,	5.00) node (4) [overlay] {};
\draw (7.16,	3.16) node (5) [overlay] {};
\draw (0.00,	4.97) node (6) [overlay] {};
\draw (0.98,	2.53) node (7) [overlay] {};
\draw (2.37,	4.06) node (8) [overlay] {};
\draw (0.88,	4.93) node (9) [overlay] {};
\draw (5.70,	0.30) node (10) [overlay] {};
\draw (4.49,	0.00) node (11) [overlay] {};
\draw (3.05,	0.27) node (12) [overlay] {};

\draw (1) -- (11) [very thick, color=red];
\draw (11) -- (10) [very thick, color=red];
\draw (11) -- (12) [very thick, color=red];
\draw (12) -- (7) [very thick, color=red];

\draw [dashed, very thin, color=teal] (7) -- (6);
\draw [dashed, very thin, color=teal] (7) -- (9);
\draw [dashed, very thin, color=teal] (1) -- (4);

\draw [dashed, very thin, color=teal] (1) -- (8);
\draw [dashed, very thin, color=teal] (10) -- (2);
\draw [dashed, very thin, color=teal] (10) -- (3);

\end{tikzpicture}
\begin{center}BPCCSP with $c_{v}=2\;\;\;\forall v\in V$\end{center}
\end{adjustbox}
\caption{Two solutions for the tree BPCCSP on the same instance are shown, with coverage capacities set to zero or two, respectively. The instance is made by $12$ Belgian cities with actual Euclidean distances as edge costs and a $200$ km budget. Prizes were arbitrarily generated.}
\label{figure_Belgium_tree}
\end{figure}
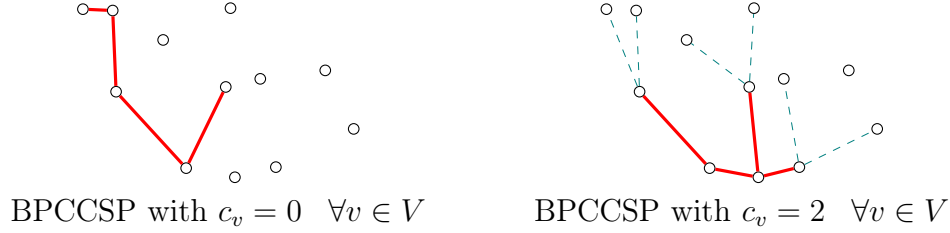

\subsection{MIP model}
Preliminary computational experiments show that introducing directed arc variables is more effective than formulating the problem with only edge variables, in line with the hierarchy of formulations for the spanning tree problem proposed by Magnanti and Wolsey \cite{trees_Wolsey}. Accordingly, we add the following continuous variables to the model \eqref{glp:length_threshold}-\eqref{glp:end}, and we relate them to the former edge variables $x$:
\begin{align}
&u_{vw}\geq 0 & \forall v\in V,\; w\in V\smallsetminus\left\lbrace v\right\rbrace \: :\:\left\lbrace v,w\right\rbrace\in E\\
&x_{e}=u_{vw}+u_{wv} & \forall e=\left\lbrace v,w\right\rbrace\in E
\end{align}
Instead of inequalities \eqref{glp:liftednconn}, we derive the analogous connectivity inequalities with the $u$ variables:
\begin{align}
\sum_{v\in V\smallsetminus S}\;\sum_{w\in S: \left\lbrace v,w\right\rbrace\in E}\;u_{vw}\;\geq y_{k} + \sum_{w\in S\cap N_{k}}z_{kw} && \forall S\subseteq V\smallsetminus\left\lbrace 0\right\rbrace,\; k\in S
\end{align}
Moreover, we constrain the number of solution edges and vertices as follows:
\begin{align}
\label{n_edges_vertices}
\sum_{e\in E}x_{e}\; =\; \sum_{v\in V\smallsetminus\left\lbrace 0\right\rbrace}y_{v}
\end{align}
Finally, as proposed by Labb\'e et al.\ \cite{Labbe04}, we strengthen our model by lifting constraints \eqref{tree:xy}, when applicable, with the $z$ variables:
\begin{align}
\label{tree:xylifted}
x_{\left\lbrace v,w\right\rbrace}+z_{wv}\leq y_{v} && \forall w\in V\smallsetminus\left\lbrace 0\right\rbrace ,\; v\in N_{w}\smallsetminus\left\lbrace 0\right\rbrace \: :\:\left\lbrace v,w\right\rbrace\in E
\end{align}
Indeed, $x_{\left\lbrace v,w\right\rbrace}$ and $z_{wv}$ cannot be $1$ at the same time for such vertices $v$ and $w$, and they are both constrained to take the value $0$ if $y_{v}=0$.
\subsection{Symmetry-breaking inequalities}
The budgeted prize-col\-lect\-ing character of this problem might give rise to symmetries. In particular, there might be a number of optimal solutions that only differ in their values of the $x$ variables. Indeed, the objective depends on the choice of vertices visited by the tree but is indifferent between alternative edge choices, as long as the tree spans the visited vertices without exceeding the budget $L$.
\begin{prop}
\label{prop_triangles}
Let $T\subseteq E$ be an optimal tree and $v,w,k\in V$ be distinct visited vertices by $T$. Suppose that the three edges $\left\lbrace v,w\right\rbrace$, $\left\lbrace v,k\right\rbrace$, $\left\lbrace w,k\right\rbrace$ are in $E$ and that $\left\lbrace v,w\right\rbrace$ is the most expensive one among them. Then, either $\left\lbrace v,w\right\rbrace\notin T$ or there exists another optimal tree $T'$ such that $\left\lbrace v,w\right\rbrace\notin T'$ and
\begin{equation}
\label{eqn_prop_tria}
T\smallsetminus \left\lbrace\left\lbrace v,w\right\rbrace, \left\lbrace v,k\right\rbrace, \left\lbrace w,k\right\rbrace\right\rbrace = T'\smallsetminus \left\lbrace\left\lbrace v,w\right\rbrace, \left\lbrace v,k\right\rbrace, \left\lbrace w,k\right\rbrace\right\rbrace.
\end{equation}
\end{prop}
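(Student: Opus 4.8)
The plan is to assume $\{v,w\}\in T$ (otherwise the first alternative in the statement already holds) and to manufacture $T'$ by a single edge swap confined to the triangle. First I would delete $\{v,w\}$ from $T$; since $T$ is a tree, this disconnects it into exactly two components $T_{v}\ni v$ and $T_{w}\ni w$. Because $k$ is visited by $T$, it lies in precisely one of these two components, which splits the argument into two symmetric cases.

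In the case $k\in T_{v}$, I would reconnect the pieces by adding the edge $\{w,k\}$, setting $T'=\left(T\smallsetminus\{v,w\}\right)\cup\{w,k\}$. Uniqueness of paths in a tree guarantees $\{w,k\}\notin T$: were it present, $w$ and $k$ would be joined by a path avoiding $\{v,w\}$, contradicting $k\in T_{v}$. Hence the added edge joins the two components without creating a cycle and restores the correct edge count, so $T'$ is again a tree on the same vertex set. Its total edge cost is $\sum_{e\in T}\ell_{e}-\ell_{\{v,w\}}+\ell_{\{w,k\}}\leq\sum_{e\in T}\ell_{e}\leq L$, since $\{v,w\}$ is the most expensive triangle edge; thus the budget \eqref{glp:length_threshold} is respected. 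The case $k\in T_{w}$ is identical with the roles of $v$ and $w$ interchanged: I would instead add $\{v,k\}$ and use $\ell_{\{v,k\}}\leq\ell_{\{v,w\}}$.

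It then remains to check that $T'$ is not merely feasible but \emph{optimal}, and that it satisfies identity \eqref{eqn_prop_tria}. For optimality the key observation is that the objective contains no $x$-terms and that $T'$ spans exactly the same visited vertices as $T$, depot included; I can therefore reuse the same $y$ and $z$ from the optimal solution associated with $T$, leaving the objective value unchanged. I would then verify that $(x',y,z)$ is feasible: constraints \eqref{tree:xy} hold because every edge of $T'$ still joins two visited vertices, the count \eqref{n_edges_vertices} holds because $T'$ is again a spanning tree of the visited set, and the connectivity constraints \eqref{glp:liftednconn} continue to hold because, as noted after the model, for any $S\subseteq V\smallsetminus\{0\}$ and $v\in S$ the right-hand side $y_{v}+\sum_{w\in N_{v}\cap S}z_{vw}$ is at most $1$ and is positive only when $v$ is visited or is covered by some vertex in $S$; in either situation that visited vertex is connected to the depot through $T'$, so its path crosses $\delta(S)$ and supplies the required cut edge. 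Finally, $T$ and $T'$ differ only by triangle edges, i.e.\ $T\mathbin{\triangle}T'\subseteq\{\{v,w\},\{v,k\},\{w,k\}\}$, so deleting the three triangle edges from each side yields \eqref{eqn_prop_tria}.

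The step I expect to demand the most care is the feasibility of the coverage variables after the swap, namely confirming that reusing the old $z$ violates no connectivity cut \eqref{glp:liftednconn}. This hinges on the fact that those cuts depend only on $T'$ being a connected spanning tree of the visited set, and not on its particular edges. Everything else — the component split, the non-membership of the swapped-in edge, and the budget inequality — is routine once the case distinction on the location of $k$ is in place.
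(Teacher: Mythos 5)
Your proof is correct and takes essentially the same route as the paper's: assume $\left\lbrace v,w\right\rbrace\in T$, note which component of $T\smallsetminus\left\lbrace\left\lbrace v,w\right\rbrace\right\rbrace$ contains $k$, verify the cheaper reconnecting edge ($\left\lbrace w,k\right\rbrace$ or $\left\lbrace v,k\right\rbrace$) is not already in $T$, and swap it in, preserving cost, the spanned vertex set, and the tree property. Your extra check that the original $y$ and $z$ remain feasible for the connectivity cuts \eqref{glp:liftednconn} is more explicit than the paper's proof, which only remarks that $T'$ visits the same vertices and hence collects the same prize, but it is the same underlying argument.
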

\begin{proof}
Suppose that $T$ contains $\left\lbrace v,w\right\rbrace$. $T$ is connected, thus there exists a path $P\subseteq T\smallsetminus\left\lbrace\left\lbrace v,w\right\rbrace\right\rbrace$ from $k$ to either $v$ or $w$. Suppose without loss of generality that $P$ connects $k$ to $v$ and that, consequently, $T$ does not contain $\left\lbrace w,k\right\rbrace$. Then, $T'=T\smallsetminus\left\lbrace\left\lbrace v,w\right\rbrace\right\rbrace\cup\left\lbrace\left\lbrace w,k\right\rbrace\right\rbrace$ is the sought optimal tree. First, notice that condition \eqref{eqn_prop_tria} holds by construction. Because of $\ell_{vw}\geq\ell_{wk}$, $T'$ is not more expensive than $T$. Moreover, $T'$ visits the same vertices as $T$, and hence collects the same prize. Since $T$ is connected, $T'$ is connected if and only if $k$ and $v$ are endpoints of a path in $T'$. But $P$ is contained in $T'$, so $T'$ is connected. Finally, $T'$ contains the same number of edges as $T$, hence it satisfies constraint \eqref{n_edges_vertices} and therefore cannot contain cycles.
\end{proof}
%
Notice that condition \eqref{eqn_prop_tria} allows us to replace the most expensive edge of any visited $3$-clique independently, and thus we can translate the latter result into symmetry-breaking inequalities involving the $x$ and the $y$ variables, for any three distinct vertices $v,w,k\in V$ such that their connecting edges $\left\lbrace v,w\right\rbrace , \left\lbrace v,k \right\rbrace , \left\lbrace w,k \right\rbrace$ belong to $E$:
\begin{align}
\label{triangle_ineq}
x_{vw}+y_{k}\leq 1 && \text{if }\ell_{vw}> \ell_{vk}\text{ and } \ell_{vw}>\ell_{wk}
\end{align}
If needed, we can break ties between the edge costs based on  endpoint indices. Figure \ref{figure_symmetry} shows a case where symmetry occurs and which can be broken by inequalities \eqref{triangle_ineq}.

\begin{figure}[t]
\centering
\begin{adjustbox}{valign=t,minipage={.45\textwidth}}
\centering
\begin{tikzpicture}[scale=0.45]
\tikzstyle{every node}=[draw,circle,fill=white,minimum size=4pt,
                            inner sep=0pt]

\draw (4.22,	2.67) node (1) [overlay, fill=black] {};
\draw (5.24,	2.91) node (2) [overlay] {};
\draw (8.00,	1.43) node (3) [overlay] {};
\draw (4.36,	5.00) node (4) [overlay] {};
\draw (7.16,	3.16) node (5) [overlay] {};
\draw (0.00,	4.97) node (6) [overlay] {};
\draw (0.98,	2.53) node (7) [overlay] {};
\draw (2.37,	4.06) node (8) [overlay] {};
\draw (0.88,	4.93) node (9) [overlay] {};
\draw (5.70,	0.30) node (10) [overlay] {};
\draw (4.49,	0.00) node (11) [overlay, fill=black] {};
\draw (3.05,	0.27) node (12) [overlay, fill=black] {};

\draw (1) -- (12) [very thick, color=red];
\draw (11) -- (10) [very thick, color=red];
\draw (11) -- (12) [very thick, color=red];
\draw (12) -- (7) [very thick, color=red];

\draw [dashed, very thin, color=teal] (7) -- (6);
\draw [dashed, very thin, color=teal] (7) -- (9);
\draw [dashed, very thin, color=teal] (1) -- (4);

\draw [dashed, very thin, color=teal] (1) -- (8);
\draw [dashed, very thin, color=teal] (10) -- (2);
\draw [dashed, very thin, color=teal] (10) -- (3);
\end{tikzpicture}
\end{adjustbox}\hfill
\begin{adjustbox}{valign=t,minipage={.45\textwidth}}
\centering
\begin{tikzpicture}[scale=0.45]
\tikzstyle{every node}=[draw,circle,fill=white,minimum size=4pt,
                            inner sep=0pt]

\draw (4.22,	2.67) node (1) [overlay, fill=black] {};
\draw (5.24,	2.91) node (2) [overlay] {};
\draw (8.00,	1.43) node (3) [overlay] {};
\draw (4.36,	5.00) node (4) [overlay] {};
\draw (7.16,	3.16) node (5) [overlay] {};
\draw (0.00,	4.97) node (6) [overlay] {};
\draw (0.98,	2.53) node (7) [overlay] {};
\draw (2.37,	4.06) node (8) [overlay] {};
\draw (0.88,	4.93) node (9) [overlay] {};
\draw (5.70,	0.30) node (10) [overlay, fill=black] {};
\draw (4.49,	0.00) node (11) [overlay, fill=black] {};
\draw (3.05,	0.27) node (12) [overlay] {};

\draw (1) -- (10) [very thick, color=red];
\draw (11) -- (10) [very thick, color=red];
\draw (11) -- (12) [very thick, color=red];
\draw (12) -- (7) [very thick, color=red];

\draw [dashed, very thin, color=teal] (7) -- (6);
\draw [dashed, very thin, color=teal] (7) -- (9);
\draw [dashed, very thin, color=teal] (1) -- (4);

\draw [dashed, very thin, color=teal] (1) -- (8);
\draw [dashed, very thin, color=teal] (10) -- (2);
\draw [dashed, very thin, color=teal] (10) -- (3);

\end{tikzpicture}
\end{adjustbox}
\caption{Two symmetric tree subgraph BPCCSP solutions on the same instance as in Figure \ref{figure_Belgium_tree} are shown, with full black vertices corresponding to those inequalities \eqref{triangle_ineq} that can break the symmetry.}
\label{figure_symmetry}
\end{figure}
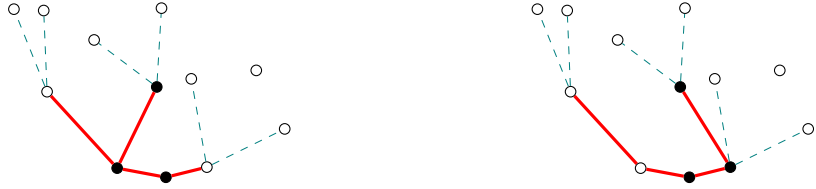

Other symmetries may occur for $n$-tuples of vertices for $n>3$, even if we add all the constraints \eqref{triangle_ineq} to the model. Already for $n = 4$, however, adding to the model their corresponding symmetry-breaking inequalities does not 
compensate the effect of the larger size of the model.

The impact of inequalities \eqref{triangle_ineq} on the solution process is discussed in Section \ref{comp_sym}, based on the computational results reported in Table \ref{table_triangles}.

\section{The tour BPCCSP}

\begin{figure}[t]
\centering
\begin{adjustbox}{valign=t,minipage={.45\textwidth}}
\hspace{33pt}
\begin{tikzpicture}[scale=0.45]
\tikzstyle{every node}=[draw,circle,fill=white,minimum size=4pt,
                            inner sep=0pt]

\draw (4.22,	2.67) node (1) [overlay] {};
\draw (5.24,	2.91) node (2) [overlay] {};
\draw (8.00,	1.43) node (3) [overlay] {};
\draw (4.36,	5.00) node (4) [overlay] {};
\draw (7.16,	3.16) node (5) [overlay] {};
\draw (0.00,	4.97) node (6) [overlay] {};
\draw (0.98,	2.53) node (7) [overlay] {};
\draw (2.37,	4.06) node (8) [overlay] {};
\draw (0.88,	4.93) node (9) [overlay] {};
\draw (5.70,	0.30) node (10) [overlay] {};
\draw (4.49,	0.00) node (11) [overlay] {};
\draw (3.05,	0.27) node (12) [overlay] {};

\draw (1) -- (2) [very thick, color=red];
\draw (2) -- (4) [very thick, color=red];
\draw (4) -- (8) [very thick, color=red];
\draw (8) -- (1) [very thick, color=red];

\draw [dashed, very thin, color=teal] (8) -- (9);
\draw [dashed, very thin, color=teal] (8) -- (6);
\draw [dashed, very thin, color=teal] (8) -- (7);

\draw [dashed, very thin, color=teal] (1) -- (10);
\draw [dashed, very thin, color=teal] (1) -- (11);
\draw [dashed, very thin, color=teal] (1) -- (12);

\draw [dashed, very thin, color=teal] (2) -- (5);
\draw [dashed, very thin, color=teal] (2) -- (3);

\end{tikzpicture}
\begin{center}TCMCSP\end{center}
\end{adjustbox}\hfill
\begin{adjustbox}{valign=t,minipage={.45\textwidth}}
\hspace{13pt}
\begin{tikzpicture}[scale=0.45]
\tikzstyle{every node}=[draw,circle,fill=white,minimum size=4pt,
                            inner sep=0pt]

\draw (4.22,	2.67) node (1) [overlay] {};
\draw (5.24,	2.91) node (2) [overlay] {};
\draw (8.00,	1.43) node (3) [overlay] {};
\draw (4.36,	5.00) node (4) [overlay] {};
\draw (7.16,	3.16) node (5) [overlay] {};
\draw (0.00,	4.97) node (6) [overlay] {};
\draw (0.98,	2.53) node (7) [overlay] {};
\draw (2.37,	4.06) node (8) [overlay] {};
\draw (0.88,	4.93) node (9) [overlay] {};
\draw (5.70,	0.30) node (10) [overlay] {};
\draw (4.49,	0.00) node (11) [overlay] {};
\draw (3.05,	0.27) node (12) [overlay] {};

\draw [very thick, color=red] (1) -- (12);
\draw [very thick, color=red] (12) -- (11);
\draw [very thick, color=red] (11) -- (10);
\draw [very thick, color=red] (10) -- (1);

\draw [dashed, very thin, color=teal] (12) -- (7);
\draw [dashed, very thin, color=teal] (11) -- (2);
\draw [dashed, very thin, color=teal] (10) -- (5);
\draw [dashed, very thin, color=teal] (10) -- (3);
\draw [dashed, very thin, color=teal] (1) -- (4);
\draw [dashed, very thin, color=teal] (1) -- (8);

\end{tikzpicture}
\begin{center}BPCCSP with $c_{v}=2\;\;\;\forall v\in V$\end{center}
\end{adjustbox}
\caption{Optimal solutions of the TCMCSP (left) and the tour subgraph BPCCSP with $c_{v}=2$ for every vertex $v$ (right) are shown, on the same instance as in Figure \ref{figure_Belgium_tree}.}
\label{figure_Belgium}
\end{figure}
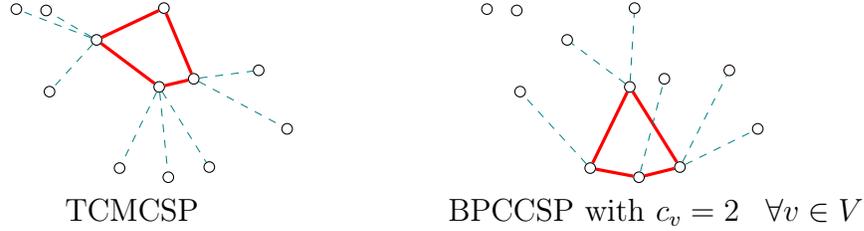
The tour subgraph BPCCSP extends the definition of the TCMCSP by considering covering capacities. Figure \ref{figure_Belgium} compares their optimal solutions for an instance with $12$ Belgian cities.
A formulation for the tour subgraph BPCCSP can be obtained from the generic model \eqref{glp:length_threshold}-\eqref{glp:end} by replacing the generic constraints \eqref{glp:subgraph} with $2$-degree constraints:
\begin{align}
\label{2degreetour}
\sum_{w\in V\smallsetminus\left\lbrace v\right\rbrace\; :\;\left\lbrace v,w\right\rbrace\in E}x_{\left\lbrace v,w\right\rbrace}\; =\; \begin{cases} 2y_{v} & \;\; v\neq 0 \\ 2 & \;\; v=0 \end{cases}&& \forall v\in V
\end{align}
Since any tour is $2$-edge connected, i.e., it cannot be disconnected by removing one edge, we can also replace constraints \eqref{glp:liftednconn} with $2$-edge connectivity constraints, which imply the former, as already proposed by Ozbaygin et al.\ \cite{Ozbaygin2016} for the TCMCSP:
\begin{align}
\sum_{e\in\delta\left( S\right)}x_{e}\geq 2\left( y_{v}+\sum_{w\in N_{v}\cap S}z_{vw} \right)&& \forall S\subseteq V\smallsetminus\left\lbrace 0\right\rbrace,\; v\in S
\end{align}
The latter constraints, given the $2$-degree constraints \eqref{2degreetour}, can be separated with the same separation scheme as \eqref{glp:liftednconn}.


Furthermore, symmetry-breaking inequalities can be added to the model for the tour case BPCCSP, to limit the number of optimal solutions that have in common the same visited vertices and coverage layer, but not the selected edges.
\begin{prop}
Let $v_{1},v_{2},v_{3},v_{4}$ be four distinct visited vertices in $V$, with $e_{12}$, $e_{13}$, $e_{14}$, $e_{23}$, $e_{24}$, $e_{34}\in E$ the edges connecting them. Suppose that $\ell_{e_{12}}+\ell_{e_{34}}>\ell_{e_{13}}+\ell_{e_{24}}$ and $\ell_{e_{12}}+\ell_{e_{34}}>\ell_{e_{23}}+\ell_{e_{14}}$. For any optimal tour $T$, there exists an optimal tour $T'$ that does not contain both the edges $e_{12}$ and $e_{34}$, and such that $T\smallsetminus\left\lbrace e_{12}, e_{13}, e_{14}, e_{23}, e_{24}, e_{34}\right\rbrace = T'\smallsetminus\left\lbrace e_{12}, e_{13}, e_{14}, e_{23}, e_{24}, e_{34}\right\rbrace$.
\end{prop}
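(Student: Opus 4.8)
The plan is to mirror the argument of Proposition \ref{prop_triangles}, replacing the single-edge swap inside a triangle by a two-edge swap (a 2-opt exchange) inside the complete graph on $v_1,v_2,v_3,v_4$. If $T$ does not already contain both $e_{12}$ and $e_{34}$, then we may take $T'=T$ and be done; so I would assume throughout that $T$ contains both of these edges and construct from it an optimal tour on the same vertices that avoids this pair while leaving every edge outside the six-edge set untouched.

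Since $v_1,v_2,v_3,v_4$ are distinct, the edges $e_{12}$ and $e_{34}$ are vertex-disjoint, so deleting them from the Hamiltonian cycle $T$ yields exactly two vertex-disjoint paths whose four endpoints are precisely $v_1,v_2,v_3,v_4$. How these endpoints are distributed over the two paths depends only on the cyclic order in which the four vertices appear along $T$, and there are just two possibilities: either the pairs alternate as $v_1-v_2\rightsquigarrow v_3-v_4\rightsquigarrow v_1$, or as $v_1-v_2\rightsquigarrow v_4-v_3\rightsquigarrow v_1$. In the first case I reconnect the two paths with the matching $\{e_{13},e_{24}\}$, and in the second with $\{e_{14},e_{23}\}$; in each configuration this is the unique way, other than reinstating $e_{12}$ and $e_{34}$, to pair up the four path-endpoints so that the two paths fuse into a single cycle rather than into two disjoint ones.

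Calling the resulting edge set $T'$, it is a Hamiltonian cycle on exactly the visited vertices of $T$, so it satisfies the $2$-degree constraints \eqref{2degreetour}, is $2$-edge connected, and—visiting the same vertices as $T$—collects the same prize under the same coverage assignment; hence $T'$ is optimal as soon as it respects the budget. This is where the two hypotheses enter: in the first cyclic case the total edge cost changes by $\ell_{e_{13}}+\ell_{e_{24}}-\ell_{e_{12}}-\ell_{e_{34}}<0$, and in the second by $\ell_{e_{23}}+\ell_{e_{14}}-\ell_{e_{12}}-\ell_{e_{34}}<0$, so in either case $T'$ is no more expensive than $T$ and continues to satisfy \eqref{glp:length_threshold}. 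Since $T'$ contains neither $e_{12}$ nor $e_{34}$ and coincides with $T$ on every edge outside $\{e_{12},e_{13},e_{14},e_{23},e_{24},e_{34}\}$, the required conclusion follows.

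I expect the only genuinely delicate point to be the verification that the chosen reconnection produces a single cycle and not two shorter ones; this is the standard 2-opt fact, but it must be carried out in tandem with the case split, so that each cyclic arrangement is paired with the single alternative matching that closes a Hamiltonian cycle—and, crucially, with the one of the two assumed inequalities that bounds its cost change. This pairing is exactly why both strict inequalities are needed in the hypothesis, since a priori we do not know which of the two cyclic orders $T$ realises. Degenerate configurations, such as one or both of the two paths reducing to a single edge (in particular $T$ being the $4$-cycle on $v_1,v_2,v_3,v_4$ itself), should be flagged but cause no difficulty, as the reconnection argument goes through verbatim.
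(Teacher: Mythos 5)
Your proof is correct and takes essentially the same route as the paper's: a 2-opt exchange inside the 4-clique, replacing $\left\lbrace e_{12},e_{34}\right\rbrace$ by whichever of $\left\lbrace e_{13},e_{24}\right\rbrace$ or $\left\lbrace e_{14},e_{23}\right\rbrace$ re-forms a single cycle, with the matching hypothesis on the costs guaranteeing the budget is still respected and optimality is preserved. The only difference is one of detail: the paper merely asserts that at least one of the two reconnections is $2$-edge connected and subtour-free, whereas you justify this explicitly via the two possible cyclic orders of $v_{1},v_{2},v_{3},v_{4}$ along $T$, which is a strictly more complete rendering of the same argument.
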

\begin{proof}
If one of the edges $e_{12}$ and $e_{34}$ is not in $T$, there is nothing to prove. Suppose now that both $e_{12}$ and $e_{34}$ are in $T$. Then, consider the other two couples of opposite edges in the $4$-clique determined by $v_{1},v_{2},v_{3},v_{4}$, namely $\left\lbrace e_{13}, e_{24}\right\rbrace$ and $\left\lbrace e_{14}, e_{23}\right\rbrace$. By removing $e_{12}$ and $e_{34}$ from $T$ and reconnecting it with the other two couples of opposite edges, we obtain $T'=T\smallsetminus\left\lbrace e_{12},e_{34}\right\rbrace\cup\left\lbrace e_{13}, e_{24}\right\rbrace$ and $T''=T\smallsetminus\left\lbrace e_{12},e_{34}\right\rbrace\cup\left\lbrace e_{14}, e_{23}\right\rbrace$, respectively. At least one of the subgraphs $T'$ and $T''$ is $2$-edge connected and does not contain subtours, say $T'$ without loss of generality. Moreover, the total cost of $T'$ is not greater than $T$, as $\ell_{e_{12}}+\ell_{e_{34}}>\ell_{e_{13}}+\ell_{e_{24}}$. Finally, $T'$ visits the same vertices as $T$, hence it is optimal.
\end{proof}
This proposition allows us to add the following symmetry-breaking inequalities to our model, for every such non-incident edges $e_{12}$ and $e_{34}$:
\begin{flalign}
\label{tour_quad}
x_{e_{12}}+x_{e_{34}}\leq 1
\end{flalign}
If needed, we can break ties on the costs of different couples of opposite edges by using the involved vertices' indices. Notice that these inequalities are the constraints counterpart of a $2$-OPT move. Further pursuing this idea, we can generalize the result in a $k$-OPT fashion for $k\geq 3$. Even though the number of the resulting constraints is polynomial in $k$, their benefits are more than offset by the higher runtimes due to increased model size, already for $k=3$.

The impact of inequalities \eqref{tour_quad} on the computational times is discussed in Section \ref{comp_sym}.

\section{Computational results}
We test our branch-and-cut framework and the Benders decomposition for the BPCCSPs with tree and with tour subgraphs. We implement our branch-and-cut framework using Gurobi 8.1, and we run our computational experiments on an Intel i7-8850H CPU at $2.60\:\mathrm{GHz}$, 12 cores and $31.2$ GB RAM machine. For every instance, we set the time limit to one hour. With this experimental setup, we are able to solve instances with up to $200$ vertices. Subroutines such as counting the number of connected components of a graph or the Stoer-Wagner max-flow algorithm have been called from the Boost library for c++ \cite{boost}.
\subsection{Test instances}
We test our algorithms on six groups of instances. Every group is generated from a single \emph{base instance}, i.e., a publicly available instance of the vehicle routing problem (VRP, four instances) or from the TSPLIB (two instances). These base instances are Euclidean and symmetric, which are reasonable properties for practical real-world applications. The details of  the base instances can be found in Table \ref{tab:instances}. We always use the edge length as edge cost.

\begin{table}[t]
\centering
\small
\begin{tabular}{ccccccc}
Name & $\left| V\right|$ & TSP & AVG & MST & Type & Source\\\cmidrule[.1em]{1-7}
p4 & $151$ & $707.86$ & $33.47$ & $635.28$ & VRP & \cite{instances_Neumann}\\\hline
p5 & $200$ & $776.44$ & $32.91$ & $691.97$ & VRP & \cite{instances_Neumann}\\\hline
X-n162-k11 & $162$ & $9174.95$ & $491.56$ & $7903.35$ & CVRP & \cite{instances_Uchoa}\\\hline
X-n195-k51 & $195$ & $10221.68$ & $496.46$ & $8882.14$ & CVRP & \cite{instances_Uchoa}\\\hline
ch150 & $150$ & $6530.90$ & $359.31$ & $5880.96$ & TSP & TSPLIB, \cite{Rein91}\\\hline
kroa200 & $200$ & $29369.41$ & $1701.17$ & $25932.60$ & TSP & TSPLIB, \cite{Rein91}
\end{tabular}
\caption{The six base instances used to generate each group of test instances. For each base instance, we show its size, its TSP objective, its average edge cost, its minimum spanning tree (MST) objective, the problem type, and its source.}
\label{tab:instances}
\end{table}

For every VRP instance, we use the vertex demands as their prize, as also proposed by Ozbaygin et al.\ \cite{Ozbaygin2016}. For the TSPLIB instances, we choose the first vertex as depot and generate the prizes pseudo-randomly according to the formula $p_{v}=1+\left( 7141 v+ 73\right) \mathrm{mod} \left( 100\right)$ for every vertex $v\in V$, as already proposed for the OP by Fischetti et al.\ \cite{Fischetti98}. Finally, for each of the six base instances, we compute the TSP and minimum spanning tree (MST) optimal solutions, and the average distance between two vertices (AVG). In every instance group we combine different parameter values:
\begin{enumerate}
\item[i.] the budget $L$ takes the values of $25\%$, $50\%$ and $75\%$ of the base instance's TSP (for tour subgraphs) or MST (tree subgraphs) optimal objective value;
\item[ii.] the neighbourhood of a vertex $v$ is chosen as
\begin{equation*}
N_{v}=\left\lbrace w\in V\smallsetminus \left\lbrace v\right\rbrace\text{ such that }\ell_{vw}\leq r_{v}\right\rbrace
\end{equation*}
for some given \emph{covering radius} $r_{v}\geq 0$ of $v$. For every $v\in V$, the radius $r_{v}$ takes  the values of $0.5$, $1$ and $2$ times the value of AVG;
\item[iii.] the coverage capacity $c_{v}$ is chosen as $0.5\%$, $1\%$, $2\%$ and $5\%$ of the base instance's size, for all $v\in V$, rounded to the closest integer.
\end{enumerate}
The covering prizes are \emph{independent}, i.e., each $q_{vw}$ is set to either $50\%$ or $75\%$ of the prize $p_{v}$ of the covered vertex. For both the tour and the tree subgraph case then, each of the six groups contains $72$ different instances, which are solved both by the branch-and-cut and by the Benders procedure. Table \ref{table_param} summarizes all the parameter values that were tested.

\begin{table}[t]
\centering
\resizebox{\textwidth}{!}{
\begin{tabular}{cccccccccccccccc}
& \multicolumn{3}{c}{$L$} && \multicolumn{3}{c}{$r_{v}$} && \multicolumn{4}{c}{$c_{v}$} && \multicolumn{2}{c}{$\nicefrac{q_{vw}}{p_{v}}$} \\\cmidrule[.1em]{2-4}\cmidrule[.1em]{6-8}\cmidrule[.1em]{10-13}\cmidrule[.1em]{15-16}
p4 & \begin{tabular}{@{}c@{}}  176.97\\  158.82\end{tabular} & \begin{tabular}{@{}c@{}}  353.93\\  317.64\end{tabular} & \begin{tabular}{@{}c@{}}  530.90\\  476.50\end{tabular} && 16.74 & 33.47 & 66.94 && 1 & 2 & 3 & 8 && 0.5 & 0.75\\\hline
p5 & \begin{tabular}{@{}c@{}}  194.11\\  173.00\end{tabular} & \begin{tabular}{@{}c@{}}  388.22\\  345.99\end{tabular} & \begin{tabular}{@{}c@{}}  582.33\\  518.98\end{tabular} && 16.46 & 32.91 & 65.82 && 1 & 2 & 4 & 10 && 0.5 & 0.75\\\hline
X-n162-k11 & \begin{tabular}{@{}c@{}}  2293.74\\  1975.84\end{tabular} & \begin{tabular}{@{}c@{}}  4587.48\\  3951.68\end{tabular} & \begin{tabular}{@{}c@{}}  6881.21\\  5927.51\end{tabular} && 245.78 & 491.56 & 983.12 && 1 & 2 & 3 & 8 && 0.5 & 0.75\\\hline
X-n195-k51 & \begin{tabular}{@{}c@{}}  2555.42\\  2220.53\end{tabular} & \begin{tabular}{@{}c@{}}  5110.84\\  4441.07\end{tabular} & \begin{tabular}{@{}c@{}}  7666.26\\  6661.61\end{tabular} && 248.23 & 496.46 & 992.92 && 1 & 2 & 4 & 10 && 0.5 & 0.75\\\hline
ch150 & \begin{tabular}{@{}c@{}}  1632.73\\  1470.24\end{tabular} & \begin{tabular}{@{}c@{}}  3265.45\\  2940.48\end{tabular} & \begin{tabular}{@{}c@{}}  4898.18\\  4410.72\end{tabular} && 179.66 & 359.31 & 718.62 && 1 & 2 & 3 & 8 && 0.5 & 0.75\\\hline
kroa200 & \begin{tabular}{@{}c@{}}  7342.35\\  6483.15\end{tabular} & \begin{tabular}{@{}c@{}} 14684.70\\  12966.30\end{tabular} & \begin{tabular}{@{}c@{}}  22027.10\\  19449.50\end{tabular} && 850.59 & 1701.17 & 3402.34 && 1 & 2 & 4 & 10 && 0.5 & 0.75\\
\end{tabular}
}
\caption{Overview of the tested parameter values. In the columns corresponding to $L$, the first line refers to the tour subgraph case, and the second line to the tree subgraph case.}
\label{table_param}
\end{table}

\subsection{Comparison between algorithms}
In this subsection and in the remainder of the paper, we report the computational results of both the branch-and-cut and the Benders decomposition while they incorporate either symmetry-breaking inequalities \eqref{triangle_ineq} or \eqref{tour_quad} (except when otherwise specified).

Out of the $864$ instances tested in total,  $782$ can be solved to optimality with the branch-and-cut, and $732$ with the Benders decomposition. We cluster the instances by group and budget $L$, and summarize the average computational times in Table \ref{computational_results}.

\begin{table}[!p]
\centering
\ssmall
\begin{tabular}{cccccccccc}
&\multicolumn{4}{c}{tour subgraph}&&\multicolumn{4}{c}{tree subgraph}\\\cmidrule[.1em]{2-5}\cmidrule[.1em]{7-10}
group& $L$  & solved  &  CPU & nodes && $L$ & solved & CPU & nodes\\\cmidrule[.1em]{1-10}
p4       & 176.97 & \begin{tabular}{@{}c@{}}24  \\ 24 \end{tabular}  & \begin{tabular}{@{}c@{}}131.97 \\ \textbf{127.27}\end{tabular} & \begin{tabular}{@{}c@{}}\textbf{1262.38} \\ 1557.29\end{tabular} && 158.82 & \begin{tabular}{@{}c@{}}24  \\ 24 \end{tabular}  & \begin{tabular}{@{}c@{}}\textbf{308.57} \\ 309.59\end{tabular}  & \begin{tabular}{@{}c@{}}\textbf{688.29} \\ 1200.08\end{tabular}  \\\cmidrule[.05em]{2-10}
         & 353.93 & \begin{tabular}{@{}c@{}}24  \\ 24 \end{tabular}  & \begin{tabular}{@{}c@{}}\textbf{4.61} \\ 9.83\end{tabular}   & \begin{tabular}{@{}c@{}}\textbf{65.50} \\ 97.25\end{tabular}    && 317.64 & \begin{tabular}{@{}c@{}}24  \\ 24 \end{tabular}  & \begin{tabular}{@{}c@{}}\textbf{200.20} \\ 209.40\end{tabular}   & \begin{tabular}{@{}c@{}}\textbf{614.67} \\ 639.63\end{tabular}  \\\cmidrule[.05em]{2-10}
         & 530.90  & \begin{tabular}{@{}c@{}}24  \\ 24 \end{tabular}  & \begin{tabular}{@{}c@{}}\textbf{16.66} \\ 55.31\end{tabular}  & \begin{tabular}{@{}c@{}}911.17 \\ \textbf{835.88}\end{tabular}  && 476.50  & \begin{tabular}{@{}c@{}}24  \\ 24 \end{tabular}  & \begin{tabular}{@{}c@{}}\textbf{179.98} \\ 272.87\end{tabular}  & \begin{tabular}{@{}c@{}}\textbf{1586.17} \\ 1633.38\end{tabular} \\\cmidrule[.05em]{1-10}
p5       & 194.11 & \begin{tabular}{@{}c@{}}24  \\ 24 \end{tabular}  & \begin{tabular}{@{}c@{}}102.17 \\ \textbf{98.58}\end{tabular} & \begin{tabular}{@{}c@{}}\textbf{648.50} \\ 679.50\end{tabular}   && 173.00    & \begin{tabular}{@{}c@{}}23  \\ \textbf{24} \end{tabular}  & \begin{tabular}{@{}c@{}}1233.35 \\ \textbf{1134.45}\end{tabular} & \begin{tabular}{@{}c@{}}\textbf{501.61} \\ 983.96\end{tabular}  \\\cmidrule[.05em]{2-10}
         & 388.22 & \begin{tabular}{@{}c@{}}\textbf{24}  \\ 20 \end{tabular}  & \begin{tabular}{@{}c@{}}\textbf{319.29} \\ 501.94\end{tabular} & \begin{tabular}{@{}c@{}}2493.38 \\ \textbf{2085.90}\end{tabular} && 345.99 & \begin{tabular}{@{}c@{}}\textbf{23}  \\ 19\end{tabular}  & \begin{tabular}{@{}c@{}}\textbf{1151.97} \\ 1768.57\end{tabular} & \begin{tabular}{@{}c@{}}1240.48 \\ \textbf{1212.26}\end{tabular} \\\cmidrule[.05em]{2-10}
         & 582.33 & \begin{tabular}{@{}c@{}}\textbf{22}  \\ 8\end{tabular}  & \begin{tabular}{@{}c@{}}\textbf{988.60} \\ 1949.67\end{tabular}  & \begin{tabular}{@{}c@{}}4845.00 \\ \textbf{4254.75}\end{tabular}    && 518.98 & \begin{tabular}{@{}c@{}}24  \\ 24\end{tabular}  & \begin{tabular}{@{}c@{}}\textbf{880.93} \\ 1243.81\end{tabular}  & \begin{tabular}{@{}c@{}}938.75 \\ \textbf{894.52}\end{tabular}  \\\cmidrule[.05em]{1-10}
X-n162-k11       & 2293.74 & \begin{tabular}{@{}c@{}}22  \\ \textbf{23} \end{tabular}  & \begin{tabular}{@{}c@{}}\textbf{349.77} \\ 527.70\end{tabular} & \begin{tabular}{@{}c@{}}\textbf{2462.50} \\ 6747.83\end{tabular} && 1975.84 & \begin{tabular}{@{}c@{}}20  \\ 20 \end{tabular}  & \begin{tabular}{@{}c@{}}1262.75 \\ \textbf{1011.27}\end{tabular}  & \begin{tabular}{@{}c@{}}959.80 \\ \textbf{898.70}\end{tabular}  \\\cmidrule[.05em]{2-10}
         & 4587.48 & \begin{tabular}{@{}c@{}}24  \\ 24 \end{tabular}  & \begin{tabular}{@{}c@{}}328.85 \\ \textbf{274.60}\end{tabular}   & \begin{tabular}{@{}c@{}}761.67 \\ \textbf{708.21}\end{tabular}    && 3951.68 & \begin{tabular}{@{}c@{}}24  \\ 24 \end{tabular}  & \begin{tabular}{@{}c@{}}\textbf{584.23} \\ 828.46\end{tabular}   & \begin{tabular}{@{}c@{}}\textbf{599.88} \\ 670.50\end{tabular}  \\\cmidrule[.05em]{2-10}
         & 6881.21  & \begin{tabular}{@{}c@{}}24  \\ 24 \end{tabular}  & \begin{tabular}{@{}c@{}}\textbf{307.58} \\ 346.86\end{tabular}  & \begin{tabular}{@{}c@{}}2292.38 \\ \textbf{1938.71}\end{tabular}  && 5927.51  & \begin{tabular}{@{}c@{}}24  \\ 24 \end{tabular}  & \begin{tabular}{@{}c@{}}\textbf{342.15} \\ 495.99\end{tabular}  & \begin{tabular}{@{}c@{}}\textbf{617.71} \\ 705.92\end{tabular} \\\cmidrule[.05em]{1-10}
X-n195-k51       & 2555.42 & \begin{tabular}{@{}c@{}}\textbf{24}  \\ 22 \end{tabular}  & \begin{tabular}{@{}c@{}}\textbf{627.73} \\ 975.62\end{tabular} & \begin{tabular}{@{}c@{}}\textbf{1848.54} \\ 2634.09\end{tabular}   && 2220.53    & \begin{tabular}{@{}c@{}}20  \\ \textbf{23} \end{tabular}  & \begin{tabular}{@{}c@{}}1262.75 \\ \textbf{1025.97}\end{tabular} & \begin{tabular}{@{}c@{}}\textbf{649.95} \\ 949.48\end{tabular}  \\\cmidrule[.05em]{2-10}
         & 5110.84 & \begin{tabular}{@{}c@{}}\textbf{21}  \\ 10 \end{tabular}  & \begin{tabular}{@{}c@{}}1431.02 \\ \textbf{1321.87}\end{tabular} & \begin{tabular}{@{}c@{}}2503.76 \\ \textbf{2232.20}\end{tabular} && 4441.07 & \begin{tabular}{@{}c@{}}24  \\ 24\end{tabular}  & \begin{tabular}{@{}c@{}}\textbf{625.55} \\ 692.72\end{tabular} & \begin{tabular}{@{}c@{}}467.17 \\ \textbf{300.33}\end{tabular} \\\cmidrule[.05em]{2-10}
         & 7666.26 & \begin{tabular}{@{}c@{}}\textbf{22}  \\ 13\end{tabular}  & \begin{tabular}{@{}c@{}}\textbf{637.96} \\ 1324.38\end{tabular}  & \begin{tabular}{@{}c@{}}2667.32 \\ \textbf{2004.38}\end{tabular}    && 6661.61 & \begin{tabular}{@{}c@{}}24  \\ 24\end{tabular}  & \begin{tabular}{@{}c@{}}\textbf{523.98} \\ 779.72\end{tabular}  & \begin{tabular}{@{}c@{}}\textbf{429.54} \\ 537.29\end{tabular}  \\\cmidrule[.05em]{1-10}
ch150       & 1632.73 & \begin{tabular}{@{}c@{}}\textbf{21}  \\ 18 \end{tabular}  & \begin{tabular}{@{}c@{}}438.67 \\ \textbf{201.57}\end{tabular} & \begin{tabular}{@{}c@{}}3209.38 \\ \textbf{2154.22}\end{tabular} && 1470.24 & \begin{tabular}{@{}c@{}}19  \\ \textbf{20} \end{tabular}  & \begin{tabular}{@{}c@{}}841.81 \\ \textbf{538.32}\end{tabular}  & \begin{tabular}{@{}c@{}}\textbf{488.16} \\ 1149.55\end{tabular}  \\\cmidrule[.05em]{2-10}
         & 3265.45 & \begin{tabular}{@{}c@{}}18  \\ \textbf{23} \end{tabular}  & \begin{tabular}{@{}c@{}}\textbf{1049.28} \\ 1443.10\end{tabular}   & \begin{tabular}{@{}c@{}}\textbf{1960.11} \\ 2328.57\end{tabular}    && 2940.48 & \begin{tabular}{@{}c@{}}24  \\ 24 \end{tabular}  & \begin{tabular}{@{}c@{}}707.69 \\ \textbf{375.84}\end{tabular}   & \begin{tabular}{@{}c@{}}22.42 \\ \textbf{1.25}\end{tabular}  \\\cmidrule[.05em]{2-10}
         & 4898.18  & \begin{tabular}{@{}c@{}}24  \\ 24 \end{tabular}  & \begin{tabular}{@{}c@{}}\textbf{235.91} \\ 623.69\end{tabular}  & \begin{tabular}{@{}c@{}}12369.29 \\ \textbf{6573.42}\end{tabular}  && 4410.72  & \begin{tabular}{@{}c@{}}24  \\ 24 \end{tabular}  & \begin{tabular}{@{}c@{}}762.54 \\ \textbf{379.31}\end{tabular}  & \begin{tabular}{@{}c@{}}114.42 \\ \textbf{34.38}\end{tabular} \\\cmidrule[.05em]{1-10}
kroa200       & 7342.35 & \begin{tabular}{@{}c@{}}\textbf{16}  \\ 15 \end{tabular}  & \begin{tabular}{@{}c@{}}\textbf{801.16} \\ 1223.17\end{tabular} & \begin{tabular}{@{}c@{}}\textbf{2764.88} \\ 4987.73\end{tabular}   && 6483.15    & \begin{tabular}{@{}c@{}}18  \\ \textbf{22} \end{tabular}  & \begin{tabular}{@{}c@{}}2128.64 \\ \textbf{1613.38}\end{tabular} & \begin{tabular}{@{}c@{}}\textbf{384.33} \\ 494.45\end{tabular}  \\\cmidrule[.05em]{2-10}
         & 14684.70 & \begin{tabular}{@{}c@{}}24  \\ 24 \end{tabular}  & \begin{tabular}{@{}c@{}}\textbf{66.60} \\ 68.07\end{tabular} & \begin{tabular}{@{}c@{}}405.04 \\ \textbf{352.71}\end{tabular} && 12966.30 & \begin{tabular}{@{}c@{}}\textbf{14}  \\ 9\end{tabular}  & \begin{tabular}{@{}c@{}}3001.87 \\ \textbf{2713.58}\end{tabular} & \begin{tabular}{@{}c@{}}1185.57 \\ \textbf{1062.44}\end{tabular} \\\cmidrule[.05em]{2-10}
         & 22027.10 & \begin{tabular}{@{}c@{}}\textbf{21}  \\ 13\end{tabular}  & \begin{tabular}{@{}c@{}}\textbf{593.44} \\ 885.50\end{tabular}  & \begin{tabular}{@{}c@{}}8095.10 \\ \textbf{5333.23}\end{tabular}    && 19449.50 & \begin{tabular}{@{}c@{}}\textbf{2}  \\ 0\end{tabular}  & \begin{tabular}{@{}c@{}}\textbf{2694.92} \\ n/a\end{tabular}  & \begin{tabular}{@{}c@{}}\textbf{2721.00} \\ n/a\end{tabular}  
\end{tabular}
\caption{For every value of $L$ considered, we show, from left to right, the number of instances solved to optimality (out of $24$), the average CPU time, and the average number of explored nodes, where averages are computed only for the solved instances.  We report both for the tour as well as for the tree case. The first line in each cell refers to the branch-and-cut, while the second line pertains to the Benders decomposition.}
\label{computational_results}
\end{table}
 
 $21$ instances have been solved via the Benders decomposition but have not been solved within the time limit via the branch-and-cut. Conversely, $77$ instances have been solved by the branch-and-cut but not by the Benders decomposition. For every instance that can be solved by both the algorithms, we compute the ratio $\varphi$ as the CPU time of the Benders decomposition divided by that for the branch-and-cut. This ratio is less than $0.9$ in $258$ cases, and greater than $1.1$ in $366$ cases. We artificially set $\varphi=0$ if the Benders decomposition can solve the instance but the branch-and-cut cannot and $\varphi=+\infty$ for the opposite case. Table \ref{table_comparison} reports on the number of times such cases have occurred. We observe that most of the instances where the Benders decomposition outperforms the branch-and-cut have a budget $L$ corresponding to $25\%$ of the TSP or the MST optimal solution, while the performance of the branch-and-cut is clearly better as $L$ approaches the TSP or the MST solution.
These observations suggest that the two algorithms are complementary to each other, and that the Benders should be used on instances with low $L$, while the branch-and-cut on all the other ones.

\begin{table}[!p]
\centering
\scriptsize
\begin{tabular}{ccccccccc}
group & $L$ & $\varphi$ & $\varphi<1$ & $\varphi\geq 1$ & $\varphi<0.9$ & $\varphi>1.1$ & $\varphi=0$ & $\varphi=+\infty$ \\\cmidrule[.1em]{1-9}

p4 & \begin{tabular}{@{}c@{}} 176.97\\ 158.82\end{tabular} & \begin{tabular}{@{}c@{}} 1.44\\ 1.02\end{tabular} & \begin{tabular}{@{}c@{}} 11\\ 13\end{tabular} & \begin{tabular}{@{}c@{}} 13\\ 11\end{tabular} & \begin{tabular}{@{}c@{}} 9\\ 12\end{tabular} & \begin{tabular}{@{}c@{}} 12\\ 8\end{tabular} & \begin{tabular}{@{}c@{}} 0\\0 \end{tabular} & \begin{tabular}{@{}c@{}} 0\\ 0\end{tabular} \\\cmidrule[.05em]{2-9}

& \begin{tabular}{@{}c@{}} 353.93\\ 317.64\end{tabular} & \begin{tabular}{@{}c@{}} 2.14\\ 1.13\end{tabular} & \begin{tabular}{@{}c@{}} 11\\ 9\end{tabular} & \begin{tabular}{@{}c@{}} 13\\ 15\end{tabular} & \begin{tabular}{@{}c@{}} 10\\ 8\end{tabular} & \begin{tabular}{@{}c@{}} 9\\ 12\end{tabular} &\begin{tabular}{@{}c@{}} 0\\ 0\end{tabular} & \begin{tabular}{@{}c@{}} 0\\ 0\end{tabular} \\\cmidrule[.05em]{2-9}
 
& \begin{tabular}{@{}c@{}} 530.90\\ 476.50\end{tabular} & \begin{tabular}{@{}c@{}} 3.87\\ 1.72\end{tabular} & \begin{tabular}{@{}c@{}} 4\\ 4\end{tabular} & \begin{tabular}{@{}c@{}} 20\\ 20\end{tabular} & \begin{tabular}{@{}c@{}} 3\\ 3\end{tabular} & \begin{tabular}{@{}c@{}} 20\\ 19\end{tabular} & \begin{tabular}{@{}c@{}} 0\\ 0\end{tabular} & \begin{tabular}{@{}c@{}} 0\\ 0\end{tabular} \\\hline
 
p5 & \begin{tabular}{@{}c@{}} 194.11\\ 173.00\end{tabular} & \begin{tabular}{@{}c@{}} 1.33\\ 0.99\end{tabular} & \begin{tabular}{@{}c@{}} 9\\ 18\end{tabular} & \begin{tabular}{@{}c@{}} 15\\ 5\end{tabular} & \begin{tabular}{@{}c@{}} 8\\ 12\end{tabular} & \begin{tabular}{@{}c@{}} 14\\ 4\end{tabular} & \begin{tabular}{@{}c@{}} 0\\ 1\end{tabular} & \begin{tabular}{@{}c@{}} 0\\ 0\end{tabular} \\\cmidrule[.05em]{2-9}

& \begin{tabular}{@{}c@{}} 388.22\\ 345.99\end{tabular} & \begin{tabular}{@{}c@{}} 2.35\\ 1.77\end{tabular} & \begin{tabular}{@{}c@{}} 4\\ 0\end{tabular} & \begin{tabular}{@{}c@{}} 16\\ 18\end{tabular} & \begin{tabular}{@{}c@{}} 4\\ 0\end{tabular} & \begin{tabular}{@{}c@{}} 15\\ 18\end{tabular} & \begin{tabular}{@{}c@{}} 0\\ 1\end{tabular} & \begin{tabular}{@{}c@{}} 4\\ 5\end{tabular} \\\cmidrule[.05em]{2-9}
 
& \begin{tabular}{@{}c@{}} 582.33\\ 518.98\end{tabular} & \begin{tabular}{@{}c@{}} 2.23\\ 1.58\end{tabular} & \begin{tabular}{@{}c@{}} 0\\ 6\end{tabular} & \begin{tabular}{@{}c@{}} 8\\ 17\end{tabular} & \begin{tabular}{@{}c@{}} 0\\ 3\end{tabular} & \begin{tabular}{@{}c@{}} 8\\ 15\end{tabular} & \begin{tabular}{@{}c@{}} 0\\ 0\end{tabular} & \begin{tabular}{@{}c@{}} 14\\ 1\end{tabular} \\\hline
 
X-n162-k11 & \begin{tabular}{@{}c@{}} 2293.74\\ 1975.84\end{tabular} & \begin{tabular}{@{}c@{}} 1.89\\ 0.89\end{tabular} & \begin{tabular}{@{}c@{}} 5\\ 16\end{tabular} & \begin{tabular}{@{}c@{}} 16\\ 4\end{tabular} & \begin{tabular}{@{}c@{}} 5\\ 14\end{tabular} & \begin{tabular}{@{}c@{}} 15\\ 4\end{tabular} & \begin{tabular}{@{}c@{}} 2\\ 0\end{tabular} & \begin{tabular}{@{}c@{}} 1\\ 0\end{tabular} \\\cmidrule[.05em]{2-9}

& \begin{tabular}{@{}c@{}} 4587.48\\ 3951.68\end{tabular} & \begin{tabular}{@{}c@{}} 1.69\\ 1.53\end{tabular} & \begin{tabular}{@{}c@{}} 13\\ 4\end{tabular} & \begin{tabular}{@{}c@{}} 11\\ 20\end{tabular} & \begin{tabular}{@{}c@{}} 13\\ 3\end{tabular} & \begin{tabular}{@{}c@{}} 11\\ 18\end{tabular} & \begin{tabular}{@{}c@{}} 0\\ 0\end{tabular} & \begin{tabular}{@{}c@{}} 0\\ 0\end{tabular} \\\cmidrule[.05em]{2-9}
 
& \begin{tabular}{@{}c@{}} 6881.21\\ 5927.51\end{tabular} & \begin{tabular}{@{}c@{}} 1.88\\ 1.51\end{tabular} & \begin{tabular}{@{}c@{}} 10\\ 2\end{tabular} & \begin{tabular}{@{}c@{}} 14\\ 22\end{tabular} & \begin{tabular}{@{}c@{}} 9\\ 1\end{tabular} & \begin{tabular}{@{}c@{}} 13\\ 21\end{tabular} & \begin{tabular}{@{}c@{}} 0\\ 0\end{tabular} & \begin{tabular}{@{}c@{}} 0\\ 0\end{tabular} \\\hline
 
X-n195-k51 & \begin{tabular}{@{}c@{}} 2555.42\\ 2220.53\end{tabular} & \begin{tabular}{@{}c@{}} 2.48\\ 0.90\end{tabular} & \begin{tabular}{@{}c@{}} 7\\ 13\end{tabular} & \begin{tabular}{@{}c@{}} 15\\ 7\end{tabular} & \begin{tabular}{@{}c@{}} 6\\ 10\end{tabular} & \begin{tabular}{@{}c@{}} 15\\5 \end{tabular} & \begin{tabular}{@{}c@{}} 0\\ 3\end{tabular} & \begin{tabular}{@{}c@{}} 2\\ 0\end{tabular} \\\cmidrule[.05em]{2-9}

& \begin{tabular}{@{}c@{}} 5110.84\\ 4441.07\end{tabular} & \begin{tabular}{@{}c@{}} 1.73\\ 1.18\end{tabular} & \begin{tabular}{@{}c@{}} 3\\ 9\end{tabular} & \begin{tabular}{@{}c@{}} 5\\ 15\end{tabular} & \begin{tabular}{@{}c@{}} 3\\ 7\end{tabular} & \begin{tabular}{@{}c@{}} 5\\ 10\end{tabular} &\begin{tabular}{@{}c@{}} 2\\ 0\end{tabular} & \begin{tabular}{@{}c@{}} 13\\ 0\end{tabular} \\\cmidrule[.05em]{2-9}
 
& \begin{tabular}{@{}c@{}} 7666.26\\ 6661.61\end{tabular} & \begin{tabular}{@{}c@{}} 3.32\\ 1.66\end{tabular} & \begin{tabular}{@{}c@{}} 3\\ 3\end{tabular} & \begin{tabular}{@{}c@{}} 9\\ 21\end{tabular} & \begin{tabular}{@{}c@{}} 2\\ 3\end{tabular} & \begin{tabular}{@{}c@{}} 9\\ 20\end{tabular} & \begin{tabular}{@{}c@{}} 1\\ 0\end{tabular} & \begin{tabular}{@{}c@{}} 10\\ 0\end{tabular} \\\hline
 
ch150 & \begin{tabular}{@{}c@{}} 1632.73\\ 1470.24\end{tabular} & \begin{tabular}{@{}c@{}} 1.76\\ 0.46\end{tabular} & \begin{tabular}{@{}c@{}} 12\\ 19\end{tabular} & \begin{tabular}{@{}c@{}} 6\\ 0\end{tabular} & \begin{tabular}{@{}c@{}} 10\\ 18\end{tabular} & \begin{tabular}{@{}c@{}} 6\\0 \end{tabular} & \begin{tabular}{@{}c@{}} 0\\0 \end{tabular} & \begin{tabular}{@{}c@{}} 3\\ 1\end{tabular} \\\cmidrule[.05em]{2-9}
 
& \begin{tabular}{@{}c@{}} 3265.45\\ 2940.48\end{tabular} & \begin{tabular}{@{}c@{}} 2.71\\ 0.58\end{tabular} & \begin{tabular}{@{}c@{}} 8\\ 20\end{tabular} & \begin{tabular}{@{}c@{}} 9\\ 4\end{tabular} & \begin{tabular}{@{}c@{}} 8\\ 20\end{tabular} & \begin{tabular}{@{}c@{}} 8\\ 1\end{tabular} & \begin{tabular}{@{}c@{}} 1\\ 0\end{tabular} & \begin{tabular}{@{}c@{}} 1\\ 0\end{tabular} \\\cmidrule[.05em]{2-9}

& \begin{tabular}{@{}c@{}} 4898.18\\ 4410.72\end{tabular} & \begin{tabular}{@{}c@{}} 3.38\\ 0.56\end{tabular} & \begin{tabular}{@{}c@{}} 6\\ 23\end{tabular} & \begin{tabular}{@{}c@{}} 18\\ 1\end{tabular} & \begin{tabular}{@{}c@{}} 5\\ 21\end{tabular} & \begin{tabular}{@{}c@{}} 18\\ 1\end{tabular} & \begin{tabular}{@{}c@{}} 0\\ 0\end{tabular} & \begin{tabular}{@{}c@{}} 0\\ 0\end{tabular} \\\hline
 
kroa200 & \begin{tabular}{@{}c@{}} 7342.35\\ 6483.15\end{tabular} & \begin{tabular}{@{}c@{}} 2.14\\ 0.79\end{tabular} & \begin{tabular}{@{}c@{}} 3\\ 16\end{tabular} & \begin{tabular}{@{}c@{}} 11\\ 2\end{tabular} & \begin{tabular}{@{}c@{}} 3\\ 11\end{tabular} & \begin{tabular}{@{}c@{}} 8\\ 1\end{tabular} & \begin{tabular}{@{}c@{}} 1\\ 4\end{tabular} & \begin{tabular}{@{}c@{}} 2\\ 0\end{tabular} \\\cmidrule[.05em]{2-9}

& \begin{tabular}{@{}c@{}} 14684.70\\ 12966.30\end{tabular} & \begin{tabular}{@{}c@{}} 1.38\\ 0.97\end{tabular} & \begin{tabular}{@{}c@{}} 10\\ 4\end{tabular} & \begin{tabular}{@{}c@{}} 14\\ 3\end{tabular} & \begin{tabular}{@{}c@{}} 9\\ 3\end{tabular} & \begin{tabular}{@{}c@{}} 13\\ 2\end{tabular} & \begin{tabular}{@{}c@{}} 0\\ 2\end{tabular} & \begin{tabular}{@{}c@{}} 0\\ 7\end{tabular} \\\cmidrule[.05em]{2-9}
 
& \begin{tabular}{@{}c@{}} 22027.10\\ 19449.50\end{tabular} & \begin{tabular}{@{}c@{}} 1.92\\ n/a\end{tabular} & \begin{tabular}{@{}c@{}} 2\\ n/a\end{tabular} & \begin{tabular}{@{}c@{}} 8\\ n/a\end{tabular} & \begin{tabular}{@{}c@{}} 2\\ n/a\end{tabular} & \begin{tabular}{@{}c@{}} 8\\ n/a\end{tabular} & \begin{tabular}{@{}c@{}} 3\\ 0\end{tabular} & \begin{tabular}{@{}c@{}} 11\\ 2\end{tabular}
\end{tabular}
\caption{We show, clustered by group and budget $L$, consecutively the average ratio $\varphi$ and the number of instances where $\varphi$ satisfies the condition in the header of the column. The first line in each cell refers to the tour case and the second line to the tree case.}
\label{table_comparison}
\end{table}

\subsection{Impact of the parameters on the optimal solutions}

When $L$ is sufficiently large, the tour subgraph problem reduces to the TSP and analogously, the tree subgraph case reduces to the MST. By contrast, when $L$ is comparatively low with respect to the TSP (MST) optimal value, the most important decision is the choice of the vertices to visit, and thus the problem's solution approaches that of an OP (or analogous tree variant). Figure \ref{final_fig_11} and Figure \ref{final_fig_12} compare optimal solutions of two instances with high and low values of $L$, for both the tour and the tree subgraph cases respectively.

\begin{figure}[t]
\centering
\begin{adjustbox}{valign=t,minipage={.45\textwidth}}
\centering
\begin{tikzpicture}[scale=0.09]
\tikzstyle{every node}=[draw,circle,fill=white,minimum size=4pt,
                            inner sep=0pt]
\draw (35, 35) node (0) [overlay, color=black] {};
\draw (22, 22) node (1) [overlay] {};
\draw (36, 26) node (2) [overlay] {};
\draw (21, 45) node (3) [overlay] {};
\draw (45, 35) node (4) [overlay] {};
\draw (55, 20) node (5) [overlay] {};
\draw (33, 34) node (6) [overlay] {};
\draw (50, 50) node (7) [overlay] {};
\draw (55, 45) node (8) [overlay] {};
\draw (26, 59) node (9) [overlay] {};
\draw (40, 66) node (10) [overlay] {};
\draw (55, 65) node (11) [overlay] {};
\draw (35, 51) node (12) [overlay] {};
\draw (62, 35) node (13) [overlay] {};
\draw (62, 57) node (14) [overlay] {};
\draw (62, 24) node (15) [overlay] {};
\draw (21, 36) node (16) [overlay] {};
\draw (33, 44) node (17) [overlay] {};
\draw (9, 56) node (18) [overlay] {};
\draw (62, 48) node (19) [overlay] {};
\draw (66, 14) node (20) [overlay] {};
\draw (44, 13) node (21) [overlay] {};
\draw (26, 13) node (22) [overlay] {};
\draw (11, 28) node (23) [overlay] {};
\draw (7, 43) node (24) [overlay] {};
\draw (17, 64) node (25) [overlay] {};
\draw (41, 46) node (26) [overlay] {};
\draw (55, 34) node (27) [overlay] {};
\draw (35, 16) node (28) [overlay] {};
\draw (52, 26) node (29) [overlay] {};
\draw (43, 26) node (30) [overlay] {};
\draw (31, 76) node (31) [overlay] {};
\draw (22, 53) node (32) [overlay] {};
\draw (26, 29) node (33) [overlay] {};
\draw (50, 40) node (34) [overlay] {};
\draw (55, 50) node (35) [overlay] {};
\draw (54, 10) node (36) [overlay] {};
\draw (60, 15) node (37) [overlay] {};
\draw (47, 66) node (38) [overlay] {};
\draw (30, 60) node (39) [overlay] {};
\draw (30, 50) node (40) [overlay] {};
\draw (12, 17) node (41) [overlay] {};
\draw (15, 14) node (42) [overlay] {};
\draw (16, 19) node (43) [overlay] {};
\draw (21, 48) node (44) [overlay] {};
\draw (50, 30) node (45) [overlay] {};
\draw (51, 42) node (46) [overlay] {};
\draw (50, 15) node (47) [overlay] {};
\draw (48, 21) node (48) [overlay] {};
\draw (12, 38) node (49) [overlay] {};
\draw (37, 52) node (50) [overlay] {};
\draw (49, 49) node (51) [overlay] {};
\draw (52, 64) node (52) [overlay] {};
\draw (20, 26) node (53) [overlay] {};
\draw (40, 30) node (54) [overlay] {};
\draw (21, 47) node (55) [overlay] {};
\draw (17, 63) node (56) [overlay] {};
\draw (31, 62) node (57) [overlay] {};
\draw (52, 33) node (58) [overlay] {};
\draw (51, 21) node (59) [overlay] {};
\draw (42, 41) node (60) [overlay] {};
\draw (31, 32) node (61) [overlay] {};
\draw (5, 25) node (62) [overlay] {};
\draw (12, 42) node (63) [overlay] {};
\draw (36, 16) node (64) [overlay] {};
\draw (52, 41) node (65) [overlay] {};
\draw (27, 23) node (66) [overlay] {};
\draw (17, 33) node (67) [overlay] {};
\draw (13, 13) node (68) [overlay] {};
\draw (57, 58) node (69) [overlay] {};
\draw (62, 42) node (70) [overlay] {};
\draw (42, 57) node (71) [overlay] {};
\draw (16, 57) node (72) [overlay] {};
\draw (8, 52) node (73) [overlay] {};
\draw (7, 38) node (74) [overlay] {};
\draw (27, 68) node (75) [overlay] {};
\draw (30, 48) node (76) [overlay] {};
\draw (43, 67) node (77) [overlay] {};
\draw (58, 48) node (78) [overlay] {};
\draw (58, 27) node (79) [overlay] {};
\draw (37, 69) node (80) [overlay] {};
\draw (38, 46) node (81) [overlay] {};
\draw (46, 10) node (82) [overlay] {};
\draw (61, 33) node (83) [overlay] {};
\draw (62, 63) node (84) [overlay] {};
\draw (63, 69) node (85) [overlay] {};
\draw (32, 22) node (86) [overlay] {};
\draw (45, 35) node (87) [overlay] {};
\draw (59, 15) node (88) [overlay] {};
\draw (5, 6) node (89) [overlay] {};
\draw (10, 17) node (90) [overlay] {};
\draw (21, 10) node (91) [overlay] {};
\draw (5, 64) node (92) [overlay] {};
\draw (30, 15) node (93) [overlay] {};
\draw (39, 10) node (94) [overlay] {};
\draw (32, 39) node (95) [overlay] {};
\draw (25, 32) node (96) [overlay] {};
\draw (25, 55) node (97) [overlay] {};
\draw (48, 28) node (98) [overlay] {};
\draw (56, 37) node (99) [overlay] {};
\draw (41, 49) node (100) [overlay] {};
\draw (35, 17) node (101) [overlay] {};
\draw (55, 45) node (102) [overlay] {};
\draw (55, 20) node (103) [overlay] {};
\draw (15, 30) node (104) [overlay] {};
\draw (25, 30) node (105) [overlay] {};
\draw (20, 50) node (106) [overlay] {};
\draw (10, 43) node (107) [overlay] {};
\draw (55, 60) node (108) [overlay] {};
\draw (30, 60) node (109) [overlay] {};
\draw (20, 65) node (110) [overlay] {};
\draw (50, 35) node (111) [overlay] {};
\draw (30, 25) node (112) [overlay] {};
\draw (15, 10) node (113) [overlay] {};
\draw (30, 5) node (114) [overlay] {};
\draw (10, 20) node (115) [overlay] {};
\draw (5, 30) node (116) [overlay] {};
\draw (20, 40) node (117) [overlay] {};
\draw (15, 60) node (118) [overlay] {};
\draw (45, 65) node (119) [overlay] {};
\draw (45, 20) node (120) [overlay] {};
\draw (45, 10) node (121) [overlay] {};
\draw (55, 5) node (122) [overlay] {};
\draw (65, 35) node (123) [overlay] {};
\draw (65, 20) node (124) [overlay] {};
\draw (45, 30) node (125) [overlay] {};
\draw (35, 40) node (126) [overlay] {};
\draw (41, 37) node (127) [overlay] {};
\draw (64, 42) node (128) [overlay] {};
\draw (40, 60) node (129) [overlay] {};
\draw (31, 52) node (130) [overlay] {};
\draw (35, 69) node (131) [overlay] {};
\draw (53, 52) node (132) [overlay] {};
\draw (65, 55) node (133) [overlay] {};
\draw (63, 65) node (134) [overlay] {};
\draw (2, 60) node (135) [overlay] {};
\draw (20, 20) node (136) [overlay] {};
\draw (5, 5) node (137) [overlay] {};
\draw (60, 12) node (138) [overlay] {};
\draw (40, 25) node (139) [overlay] {};
\draw (42, 7) node (140) [overlay] {};
\draw (24, 12) node (141) [overlay] {};
\draw (23, 3) node (142) [overlay] {};
\draw (11, 14) node (143) [overlay] {};
\draw (6, 38) node (144) [overlay] {};
\draw (2, 48) node (145) [overlay] {};
\draw (8, 56) node (146) [overlay] {};
\draw (13, 52) node (147) [overlay] {};
\draw (6, 68) node (148) [overlay] {};
\draw (47, 47) node (149) [overlay] {};
\draw (49, 58) node (150) [overlay] {};
\draw (27, 43) node (151) [overlay] {};
\draw (37, 31) node (152) [overlay] {};
\draw (57, 29) node (153) [overlay] {};
\draw (63, 23) node (154) [overlay] {};
\draw (53, 12) node (155) [overlay] {};
\draw (32, 12) node (156) [overlay] {};
\draw (36, 26) node (157) [overlay] {};
\draw (21, 24) node (158) [overlay] {};
\draw (17, 34) node (159) [overlay] {};
\draw (12, 24) node (160) [overlay] {};
\draw (24, 58) node (161) [overlay] {};
\draw (27, 69) node (162) [overlay] {};
\draw (15, 77) node (163) [overlay] {};
\draw (62, 77) node (164) [overlay] {};
\draw (49, 73) node (165) [overlay] {};
\draw (67, 5) node (166) [overlay] {};
\draw (56, 39) node (167) [overlay] {};
\draw (37, 47) node (168) [overlay] {};
\draw (37, 56) node (169) [overlay] {};
\draw (57, 68) node (170) [overlay] {};
\draw (47, 16) node (171) [overlay] {};
\draw (44, 17) node (172) [overlay] {};
\draw (46, 13) node (173) [overlay] {};
\draw (49, 11) node (174) [overlay] {};
\draw (49, 42) node (175) [overlay] {};
\draw (53, 43) node (176) [overlay] {};
\draw (61, 52) node (177) [overlay] {};
\draw (57, 48) node (178) [overlay] {};
\draw (56, 37) node (179) [overlay] {};
\draw (55, 54) node (180) [overlay] {};
\draw (15, 47) node (181) [overlay] {};
\draw (14, 37) node (182) [overlay] {};
\draw (11, 31) node (183) [overlay] {};
\draw (16, 22) node (184) [overlay] {};
\draw (4, 18) node (185) [overlay] {};
\draw (28, 18) node (186) [overlay] {};
\draw (26, 52) node (187) [overlay] {};
\draw (26, 35) node (188) [overlay] {};
\draw (31, 67) node (189) [overlay] {};
\draw (15, 19) node (190) [overlay] {};
\draw (22, 22) node (191) [overlay] {};
\draw (18, 24) node (192) [overlay] {};
\draw (26, 27) node (193) [overlay] {};
\draw (25, 24) node (194) [overlay] {};
\draw (22, 27) node (195) [overlay] {};
\draw (25, 21) node (196) [overlay] {};
\draw (19, 21) node (197) [overlay] {};
\draw (20, 26) node (198) [overlay] {};
\draw (18, 18) node (199) [overlay] {};
\draw (0) -- (6) [very thick, color=red];
\draw (0) -- (152) [very thick, color=red];
\draw (1) -- (191) [very thick, color=red];
\draw (1) -- (194) [very thick, color=red];
\draw (2) -- (139) [very thick, color=red];
\draw (2) -- (157) [very thick, color=red];
\draw (3) -- (55) [very thick, color=red];
\draw (3) -- (117) [very thick, color=red];
\draw (5) -- (59) [very thick, color=red];
\draw (5) -- (103) [very thick, color=red];
\draw (6) -- (61) [very thick, color=red];
\draw (8) -- (102) [very thick, color=red];
\draw (8) -- (176) [very thick, color=red];
\draw (9) -- (39) [very thick, color=red];
\draw (9) -- (161) [very thick, color=red];
\draw (10) -- (77) [very thick, color=red];
\draw (10) -- (80) [very thick, color=red];
\draw (11) -- (52) [very thick, color=red];
\draw (11) -- (108) [very thick, color=red];
\draw (16) -- (67) [very thick, color=red];
\draw (16) -- (117) [very thick, color=red];
\draw (27) -- (153) [very thick, color=red];
\draw (27) -- (179) [very thick, color=red];
\draw (30) -- (48) [very thick, color=red];
\draw (30) -- (139) [very thick, color=red];
\draw (32) -- (97) [very thick, color=red];
\draw (32) -- (106) [very thick, color=red];
\draw (33) -- (105) [very thick, color=red];
\draw (33) -- (193) [very thick, color=red];
\draw (35) -- (132) [very thick, color=red];
\draw (35) -- (178) [very thick, color=red];
\draw (38) -- (52) [very thick, color=red];
\draw (38) -- (119) [very thick, color=red];
\draw (39) -- (109) [very thick, color=red];
\draw (44) -- (55) [very thick, color=red];
\draw (44) -- (106) [very thick, color=red];
\draw (46) -- (65) [very thick, color=red];
\draw (46) -- (176) [very thick, color=red];
\draw (48) -- (59) [very thick, color=red];
\draw (53) -- (158) [very thick, color=red];
\draw (53) -- (198) [very thick, color=red];
\draw (57) -- (109) [very thick, color=red];
\draw (57) -- (131) [very thick, color=red];
\draw (61) -- (96) [very thick, color=red];
\draw (65) -- (167) [very thick, color=red];
\draw (67) -- (104) [very thick, color=red];
\draw (69) -- (108) [very thick, color=red];
\draw (69) -- (180) [very thick, color=red];
\draw (77) -- (119) [very thick, color=red];
\draw (79) -- (103) [very thick, color=red];
\draw (79) -- (153) [very thick, color=red];
\draw (80) -- (131) [very thick, color=red];
\draw (96) -- (105) [very thick, color=red];
\draw (97) -- (161) [very thick, color=red];
\draw (99) -- (167) [very thick, color=red];
\draw (99) -- (179) [very thick, color=red];
\draw (102) -- (178) [very thick, color=red];
\draw (104) -- (198) [very thick, color=red];
\draw (132) -- (180) [very thick, color=red];
\draw (152) -- (157) [very thick, color=red];
\draw (158) -- (191) [very thick, color=red];
\draw (193) -- (194) [very thick, color=red];
\draw (4) -- (102) [dashed, very thin, color=teal];
\draw (7) -- (99) [dashed, very thin, color=teal];
\draw (12) -- (10) [dashed, very thin, color=teal];
\draw (13) -- (178) [dashed, very thin, color=teal];
\draw (14) -- (69) [dashed, very thin, color=teal];
\draw (15) -- (179) [dashed, very thin, color=teal];
\draw (17) -- (39) [dashed, very thin, color=teal];
\draw (18) -- (55) [dashed, very thin, color=teal];
\draw (19) -- (108) [dashed, very thin, color=teal];
\draw (20) -- (79) [dashed, very thin, color=teal];
\draw (21) -- (157) [dashed, very thin, color=teal];
\draw (22) -- (33) [dashed, very thin, color=teal];
\draw (23) -- (117) [dashed, very thin, color=teal];
\draw (24) -- (106) [dashed, very thin, color=teal];
\draw (25) -- (97) [dashed, very thin, color=teal];
\draw (26) -- (35) [dashed, very thin, color=teal];
\draw (28) -- (33) [dashed, very thin, color=teal];
\draw (29) -- (46) [dashed, very thin, color=teal];
\draw (31) -- (77) [dashed, very thin, color=teal];
\draw (34) -- (65) [dashed, very thin, color=teal];
\draw (36) -- (5) [dashed, very thin, color=teal];
\draw (37) -- (79) [dashed, very thin, color=teal];
\draw (40) -- (32) [dashed, very thin, color=teal];
\draw (41) -- (104) [dashed, very thin, color=teal];
\draw (42) -- (194) [dashed, very thin, color=teal];
\draw (43) -- (67) [dashed, very thin, color=teal];
\draw (45) -- (102) [dashed, very thin, color=teal];
\draw (47) -- (139) [dashed, very thin, color=teal];
\draw (49) -- (106) [dashed, very thin, color=teal];
\draw (50) -- (77) [dashed, very thin, color=teal];
\draw (51) -- (69) [dashed, very thin, color=teal];
\draw (54) -- (6) [dashed, very thin, color=teal];
\draw (56) -- (9) [dashed, very thin, color=teal];
\draw (58) -- (167) [dashed, very thin, color=teal];
\draw (60) -- (167) [dashed, very thin, color=teal];
\draw (62) -- (198) [dashed, very thin, color=teal];
\draw (63) -- (32) [dashed, very thin, color=teal];
\draw (64) -- (191) [dashed, very thin, color=teal];
\draw (68) -- (191) [dashed, very thin, color=teal];
\draw (70) -- (27) [dashed, very thin, color=teal];
\draw (71) -- (108) [dashed, very thin, color=teal];
\draw (72) -- (57) [dashed, very thin, color=teal];
\draw (73) -- (3) [dashed, very thin, color=teal];
\draw (74) -- (3) [dashed, very thin, color=teal];
\draw (75) -- (10) [dashed, very thin, color=teal];
\draw (76) -- (61) [dashed, very thin, color=teal];
\draw (78) -- (8) [dashed, very thin, color=teal];
\draw (81) -- (132) [dashed, very thin, color=teal];
\draw (82) -- (139) [dashed, very thin, color=teal];
\draw (83) -- (179) [dashed, very thin, color=teal];
\draw (84) -- (180) [dashed, very thin, color=teal];
\draw (85) -- (11) [dashed, very thin, color=teal];
\draw (86) -- (96) [dashed, very thin, color=teal];
\draw (87) -- (99) [dashed, very thin, color=teal];
\draw (88) -- (153) [dashed, very thin, color=teal];
\draw (90) -- (1) [dashed, very thin, color=teal];
\draw (91) -- (198) [dashed, very thin, color=teal];
\draw (93) -- (157) [dashed, very thin, color=teal];
\draw (94) -- (2) [dashed, very thin, color=teal];
\draw (95) -- (44) [dashed, very thin, color=teal];
\draw (98) -- (176) [dashed, very thin, color=teal];
\draw (100) -- (132) [dashed, very thin, color=teal];
\draw (101) -- (152) [dashed, very thin, color=teal];
\draw (107) -- (55) [dashed, very thin, color=teal];
\draw (110) -- (161) [dashed, very thin, color=teal];
\draw (111) -- (178) [dashed, very thin, color=teal];
\draw (112) -- (0) [dashed, very thin, color=teal];
\draw (113) -- (158) [dashed, very thin, color=teal];
\draw (115) -- (53) [dashed, very thin, color=teal];
\draw (118) -- (57) [dashed, very thin, color=teal];
\draw (120) -- (152) [dashed, very thin, color=teal];
\draw (121) -- (30) [dashed, very thin, color=teal];
\draw (122) -- (5) [dashed, very thin, color=teal];
\draw (123) -- (8) [dashed, very thin, color=teal];
\draw (124) -- (153) [dashed, very thin, color=teal];
\draw (125) -- (65) [dashed, very thin, color=teal];
\draw (126) -- (46) [dashed, very thin, color=teal];
\draw (127) -- (176) [dashed, very thin, color=teal];
\draw (128) -- (27) [dashed, very thin, color=teal];
\draw (129) -- (80) [dashed, very thin, color=teal];
\draw (130) -- (109) [dashed, very thin, color=teal];
\draw (133) -- (52) [dashed, very thin, color=teal];
\draw (134) -- (38) [dashed, very thin, color=teal];
\draw (136) -- (61) [dashed, very thin, color=teal];
\draw (138) -- (103) [dashed, very thin, color=teal];
\draw (140) -- (48) [dashed, very thin, color=teal];
\draw (141) -- (194) [dashed, very thin, color=teal];
\draw (143) -- (53) [dashed, very thin, color=teal];
\draw (144) -- (117) [dashed, very thin, color=teal];
\draw (146) -- (161) [dashed, very thin, color=teal];
\draw (147) -- (9) [dashed, very thin, color=teal];
\draw (149) -- (180) [dashed, very thin, color=teal];
\draw (150) -- (119) [dashed, very thin, color=teal];
\draw (151) -- (44) [dashed, very thin, color=teal];
\draw (155) -- (59) [dashed, very thin, color=teal];
\draw (156) -- (193) [dashed, very thin, color=teal];
\draw (159) -- (6) [dashed, very thin, color=teal];
\draw (160) -- (193) [dashed, very thin, color=teal];
\draw (162) -- (80) [dashed, very thin, color=teal];
\draw (164) -- (11) [dashed, very thin, color=teal];
\draw (165) -- (131) [dashed, very thin, color=teal];
\draw (168) -- (39) [dashed, very thin, color=teal];
\draw (169) -- (131) [dashed, very thin, color=teal];
\draw (170) -- (119) [dashed, very thin, color=teal];
\draw (171) -- (30) [dashed, very thin, color=teal];
\draw (172) -- (103) [dashed, very thin, color=teal];
\draw (173) -- (48) [dashed, very thin, color=teal];
\draw (174) -- (59) [dashed, very thin, color=teal];
\draw (175) -- (35) [dashed, very thin, color=teal];
\draw (177) -- (52) [dashed, very thin, color=teal];
\draw (181) -- (97) [dashed, very thin, color=teal];
\draw (182) -- (105) [dashed, very thin, color=teal];
\draw (183) -- (16) [dashed, very thin, color=teal];
\draw (184) -- (158) [dashed, very thin, color=teal];
\draw (185) -- (104) [dashed, very thin, color=teal];
\draw (186) -- (1) [dashed, very thin, color=teal];
\draw (187) -- (109) [dashed, very thin, color=teal];
\draw (188) -- (2) [dashed, very thin, color=teal];
\draw (189) -- (38) [dashed, very thin, color=teal];
\draw (192) -- (16) [dashed, very thin, color=teal];
\draw (195) -- (0) [dashed, very thin, color=teal];
\draw (196) -- (67) [dashed, very thin, color=teal];
\draw (197) -- (105) [dashed, very thin, color=teal];
\draw (199) -- (96) [dashed, very thin, color=teal];
\end{tikzpicture}
\begin{center}An optimal tour with $L=194.11$.\end{center}
\end{adjustbox}\hfill
\begin{adjustbox}{valign=t,minipage={.45\textwidth}}
\centering
\begin{tikzpicture}[scale=0.09]
\tikzstyle{every node}=[draw,circle,fill=white,minimum size=4pt,
                            inner sep=0pt]
\draw (35, 35) node (0) [overlay, color=black] {};
\draw (22, 22) node (1) [overlay] {};
\draw (36, 26) node (2) [overlay] {};
\draw (21, 45) node (3) [overlay] {};
\draw (45, 35) node (4) [overlay] {};
\draw (55, 20) node (5) [overlay] {};
\draw (33, 34) node (6) [overlay] {};
\draw (50, 50) node (7) [overlay] {};
\draw (55, 45) node (8) [overlay] {};
\draw (26, 59) node (9) [overlay] {};
\draw (40, 66) node (10) [overlay] {};
\draw (55, 65) node (11) [overlay] {};
\draw (35, 51) node (12) [overlay] {};
\draw (62, 35) node (13) [overlay] {};
\draw (62, 57) node (14) [overlay] {};
\draw (62, 24) node (15) [overlay] {};
\draw (21, 36) node (16) [overlay] {};
\draw (33, 44) node (17) [overlay] {};
\draw (9, 56) node (18) [overlay] {};
\draw (62, 48) node (19) [overlay] {};
\draw (66, 14) node (20) [overlay] {};
\draw (44, 13) node (21) [overlay] {};
\draw (26, 13) node (22) [overlay] {};
\draw (11, 28) node (23) [overlay] {};
\draw (7, 43) node (24) [overlay] {};
\draw (17, 64) node (25) [overlay] {};
\draw (41, 46) node (26) [overlay] {};
\draw (55, 34) node (27) [overlay] {};
\draw (35, 16) node (28) [overlay] {};
\draw (52, 26) node (29) [overlay] {};
\draw (43, 26) node (30) [overlay] {};
\draw (31, 76) node (31) [overlay] {};
\draw (22, 53) node (32) [overlay] {};
\draw (26, 29) node (33) [overlay] {};
\draw (50, 40) node (34) [overlay] {};
\draw (55, 50) node (35) [overlay] {};
\draw (54, 10) node (36) [overlay] {};
\draw (60, 15) node (37) [overlay] {};
\draw (47, 66) node (38) [overlay] {};
\draw (30, 60) node (39) [overlay] {};
\draw (30, 50) node (40) [overlay] {};
\draw (12, 17) node (41) [overlay] {};
\draw (15, 14) node (42) [overlay] {};
\draw (16, 19) node (43) [overlay] {};
\draw (21, 48) node (44) [overlay] {};
\draw (50, 30) node (45) [overlay] {};
\draw (51, 42) node (46) [overlay] {};
\draw (50, 15) node (47) [overlay] {};
\draw (48, 21) node (48) [overlay] {};
\draw (12, 38) node (49) [overlay] {};
\draw (37, 52) node (50) [overlay] {};
\draw (49, 49) node (51) [overlay] {};
\draw (52, 64) node (52) [overlay] {};
\draw (20, 26) node (53) [overlay] {};
\draw (40, 30) node (54) [overlay] {};
\draw (21, 47) node (55) [overlay] {};
\draw (17, 63) node (56) [overlay] {};
\draw (31, 62) node (57) [overlay] {};
\draw (52, 33) node (58) [overlay] {};
\draw (51, 21) node (59) [overlay] {};
\draw (42, 41) node (60) [overlay] {};
\draw (31, 32) node (61) [overlay] {};
\draw (5, 25) node (62) [overlay] {};
\draw (12, 42) node (63) [overlay] {};
\draw (36, 16) node (64) [overlay] {};
\draw (52, 41) node (65) [overlay] {};
\draw (27, 23) node (66) [overlay] {};
\draw (17, 33) node (67) [overlay] {};
\draw (13, 13) node (68) [overlay] {};
\draw (57, 58) node (69) [overlay] {};
\draw (62, 42) node (70) [overlay] {};
\draw (42, 57) node (71) [overlay] {};
\draw (16, 57) node (72) [overlay] {};
\draw (8, 52) node (73) [overlay] {};
\draw (7, 38) node (74) [overlay] {};
\draw (27, 68) node (75) [overlay] {};
\draw (30, 48) node (76) [overlay] {};
\draw (43, 67) node (77) [overlay] {};
\draw (58, 48) node (78) [overlay] {};
\draw (58, 27) node (79) [overlay] {};
\draw (37, 69) node (80) [overlay] {};
\draw (38, 46) node (81) [overlay] {};
\draw (46, 10) node (82) [overlay] {};
\draw (61, 33) node (83) [overlay] {};
\draw (62, 63) node (84) [overlay] {};
\draw (63, 69) node (85) [overlay] {};
\draw (32, 22) node (86) [overlay] {};
\draw (45, 35) node (87) [overlay] {};
\draw (59, 15) node (88) [overlay] {};
\draw (5, 6) node (89) [overlay] {};
\draw (10, 17) node (90) [overlay] {};
\draw (21, 10) node (91) [overlay] {};
\draw (5, 64) node (92) [overlay] {};
\draw (30, 15) node (93) [overlay] {};
\draw (39, 10) node (94) [overlay] {};
\draw (32, 39) node (95) [overlay] {};
\draw (25, 32) node (96) [overlay] {};
\draw (25, 55) node (97) [overlay] {};
\draw (48, 28) node (98) [overlay] {};
\draw (56, 37) node (99) [overlay] {};
\draw (41, 49) node (100) [overlay] {};
\draw (35, 17) node (101) [overlay] {};
\draw (55, 45) node (102) [overlay] {};
\draw (55, 20) node (103) [overlay] {};
\draw (15, 30) node (104) [overlay] {};
\draw (25, 30) node (105) [overlay] {};
\draw (20, 50) node (106) [overlay] {};
\draw (10, 43) node (107) [overlay] {};
\draw (55, 60) node (108) [overlay] {};
\draw (30, 60) node (109) [overlay] {};
\draw (20, 65) node (110) [overlay] {};
\draw (50, 35) node (111) [overlay] {};
\draw (30, 25) node (112) [overlay] {};
\draw (15, 10) node (113) [overlay] {};
\draw (30, 5) node (114) [overlay] {};
\draw (10, 20) node (115) [overlay] {};
\draw (5, 30) node (116) [overlay] {};
\draw (20, 40) node (117) [overlay] {};
\draw (15, 60) node (118) [overlay] {};
\draw (45, 65) node (119) [overlay] {};
\draw (45, 20) node (120) [overlay] {};
\draw (45, 10) node (121) [overlay] {};
\draw (55, 5) node (122) [overlay] {};
\draw (65, 35) node (123) [overlay] {};
\draw (65, 20) node (124) [overlay] {};
\draw (45, 30) node (125) [overlay] {};
\draw (35, 40) node (126) [overlay] {};
\draw (41, 37) node (127) [overlay] {};
\draw (64, 42) node (128) [overlay] {};
\draw (40, 60) node (129) [overlay] {};
\draw (31, 52) node (130) [overlay] {};
\draw (35, 69) node (131) [overlay] {};
\draw (53, 52) node (132) [overlay] {};
\draw (65, 55) node (133) [overlay] {};
\draw (63, 65) node (134) [overlay] {};
\draw (2, 60) node (135) [overlay] {};
\draw (20, 20) node (136) [overlay] {};
\draw (5, 5) node (137) [overlay] {};
\draw (60, 12) node (138) [overlay] {};
\draw (40, 25) node (139) [overlay] {};
\draw (42, 7) node (140) [overlay] {};
\draw (24, 12) node (141) [overlay] {};
\draw (23, 3) node (142) [overlay] {};
\draw (11, 14) node (143) [overlay] {};
\draw (6, 38) node (144) [overlay] {};
\draw (2, 48) node (145) [overlay] {};
\draw (8, 56) node (146) [overlay] {};
\draw (13, 52) node (147) [overlay] {};
\draw (6, 68) node (148) [overlay] {};
\draw (47, 47) node (149) [overlay] {};
\draw (49, 58) node (150) [overlay] {};
\draw (27, 43) node (151) [overlay] {};
\draw (37, 31) node (152) [overlay] {};
\draw (57, 29) node (153) [overlay] {};
\draw (63, 23) node (154) [overlay] {};
\draw (53, 12) node (155) [overlay] {};
\draw (32, 12) node (156) [overlay] {};
\draw (36, 26) node (157) [overlay] {};
\draw (21, 24) node (158) [overlay] {};
\draw (17, 34) node (159) [overlay] {};
\draw (12, 24) node (160) [overlay] {};
\draw (24, 58) node (161) [overlay] {};
\draw (27, 69) node (162) [overlay] {};
\draw (15, 77) node (163) [overlay] {};
\draw (62, 77) node (164) [overlay] {};
\draw (49, 73) node (165) [overlay] {};
\draw (67, 5) node (166) [overlay] {};
\draw (56, 39) node (167) [overlay] {};
\draw (37, 47) node (168) [overlay] {};
\draw (37, 56) node (169) [overlay] {};
\draw (57, 68) node (170) [overlay] {};
\draw (47, 16) node (171) [overlay] {};
\draw (44, 17) node (172) [overlay] {};
\draw (46, 13) node (173) [overlay] {};
\draw (49, 11) node (174) [overlay] {};
\draw (49, 42) node (175) [overlay] {};
\draw (53, 43) node (176) [overlay] {};
\draw (61, 52) node (177) [overlay] {};
\draw (57, 48) node (178) [overlay] {};
\draw (56, 37) node (179) [overlay] {};
\draw (55, 54) node (180) [overlay] {};
\draw (15, 47) node (181) [overlay] {};
\draw (14, 37) node (182) [overlay] {};
\draw (11, 31) node (183) [overlay] {};
\draw (16, 22) node (184) [overlay] {};
\draw (4, 18) node (185) [overlay] {};
\draw (28, 18) node (186) [overlay] {};
\draw (26, 52) node (187) [overlay] {};
\draw (26, 35) node (188) [overlay] {};
\draw (31, 67) node (189) [overlay] {};
\draw (15, 19) node (190) [overlay] {};
\draw (22, 22) node (191) [overlay] {};
\draw (18, 24) node (192) [overlay] {};
\draw (26, 27) node (193) [overlay] {};
\draw (25, 24) node (194) [overlay] {};
\draw (22, 27) node (195) [overlay] {};
\draw (25, 21) node (196) [overlay] {};
\draw (19, 21) node (197) [overlay] {};
\draw (20, 26) node (198) [overlay] {};
\draw (18, 18) node (199) [overlay] {};
\draw (0) -- (6) [very thick, color=red];
\draw (0) -- (126) [very thick, color=red];
\draw (1) -- (136) [very thick, color=red];
\draw (1) -- (191) [very thick, color=red];
\draw (2) -- (54) [very thick, color=red];
\draw (2) -- (157) [very thick, color=red];
\draw (3) -- (55) [very thick, color=red];
\draw (3) -- (117) [very thick, color=red];
\draw (4) -- (87) [very thick, color=red];
\draw (4) -- (127) [very thick, color=red];
\draw (5) -- (79) [very thick, color=red];
\draw (5) -- (103) [very thick, color=red];
\draw (6) -- (61) [very thick, color=red];
\draw (7) -- (51) [very thick, color=red];
\draw (7) -- (132) [very thick, color=red];
\draw (8) -- (102) [very thick, color=red];
\draw (8) -- (176) [very thick, color=red];
\draw (9) -- (39) [very thick, color=red];
\draw (9) -- (161) [very thick, color=red];
\draw (10) -- (77) [very thick, color=red];
\draw (10) -- (80) [very thick, color=red];
\draw (11) -- (52) [very thick, color=red];
\draw (11) -- (170) [very thick, color=red];
\draw (12) -- (50) [very thick, color=red];
\draw (12) -- (130) [very thick, color=red];
\draw (13) -- (83) [very thick, color=red];
\draw (13) -- (128) [very thick, color=red];
\draw (14) -- (69) [very thick, color=red];
\draw (14) -- (84) [very thick, color=red];
\draw (16) -- (117) [very thick, color=red];
\draw (16) -- (188) [very thick, color=red];
\draw (17) -- (76) [very thick, color=red];
\draw (17) -- (126) [very thick, color=red];
\draw (18) -- (72) [very thick, color=red];
\draw (18) -- (146) [very thick, color=red];
\draw (19) -- (70) [very thick, color=red];
\draw (19) -- (78) [very thick, color=red];
\draw (21) -- (121) [very thick, color=red];
\draw (21) -- (173) [very thick, color=red];
\draw (22) -- (141) [very thick, color=red];
\draw (22) -- (186) [very thick, color=red];
\draw (24) -- (107) [very thick, color=red];
\draw (24) -- (144) [very thick, color=red];
\draw (25) -- (56) [very thick, color=red];
\draw (25) -- (110) [very thick, color=red];
\draw (26) -- (60) [very thick, color=red];
\draw (26) -- (81) [very thick, color=red];
\draw (27) -- (58) [very thick, color=red];
\draw (27) -- (179) [very thick, color=red];
\draw (28) -- (64) [very thick, color=red];
\draw (28) -- (93) [very thick, color=red];
\draw (29) -- (45) [very thick, color=red];
\draw (29) -- (59) [very thick, color=red];
\draw (30) -- (125) [very thick, color=red];
\draw (30) -- (139) [very thick, color=red];
\draw (31) -- (131) [very thick, color=red];
\draw (31) -- (162) [very thick, color=red];
\draw (32) -- (106) [very thick, color=red];
\draw (32) -- (187) [very thick, color=red];
\draw (33) -- (105) [very thick, color=red];
\draw (33) -- (193) [very thick, color=red];
\draw (34) -- (65) [very thick, color=red];
\draw (34) -- (175) [very thick, color=red];
\draw (36) -- (122) [very thick, color=red];
\draw (36) -- (155) [very thick, color=red];
\draw (37) -- (88) [very thick, color=red];
\draw (37) -- (138) [very thick, color=red];
\draw (38) -- (52) [very thick, color=red];
\draw (38) -- (119) [very thick, color=red];
\draw (39) -- (109) [very thick, color=red];
\draw (40) -- (76) [very thick, color=red];
\draw (40) -- (130) [very thick, color=red];
\draw (41) -- (90) [very thick, color=red];
\draw (41) -- (143) [very thick, color=red];
\draw (42) -- (68) [very thick, color=red];
\draw (42) -- (113) [very thick, color=red];
\draw (43) -- (184) [very thick, color=red];
\draw (43) -- (199) [very thick, color=red];
\draw (44) -- (55) [very thick, color=red];
\draw (44) -- (106) [very thick, color=red];
\draw (45) -- (98) [very thick, color=red];
\draw (46) -- (175) [very thick, color=red];
\draw (46) -- (176) [very thick, color=red];
\draw (47) -- (155) [very thick, color=red];
\draw (47) -- (174) [very thick, color=red];
\draw (48) -- (59) [very thick, color=red];
\draw (48) -- (120) [very thick, color=red];
\draw (49) -- (74) [very thick, color=red];
\draw (49) -- (182) [very thick, color=red];
\draw (50) -- (169) [very thick, color=red];
\draw (51) -- (149) [very thick, color=red];
\draw (53) -- (195) [very thick, color=red];
\draw (53) -- (198) [very thick, color=red];
\draw (54) -- (152) [very thick, color=red];
\draw (56) -- (118) [very thick, color=red];
\draw (57) -- (109) [very thick, color=red];
\draw (57) -- (129) [very thick, color=red];
\draw (58) -- (111) [very thick, color=red];
\draw (60) -- (127) [very thick, color=red];
\draw (61) -- (152) [very thick, color=red];
\draw (62) -- (160) [very thick, color=red];
\draw (62) -- (185) [very thick, color=red];
\draw (63) -- (107) [very thick, color=red];
\draw (63) -- (181) [very thick, color=red];
\draw (64) -- (101) [very thick, color=red];
\draw (65) -- (167) [very thick, color=red];
\draw (66) -- (112) [very thick, color=red];
\draw (66) -- (194) [very thick, color=red];
\draw (67) -- (104) [very thick, color=red];
\draw (67) -- (159) [very thick, color=red];
\draw (68) -- (143) [very thick, color=red];
\draw (69) -- (108) [very thick, color=red];
\draw (70) -- (128) [very thick, color=red];
\draw (72) -- (118) [very thick, color=red];
\draw (73) -- (146) [very thick, color=red];
\draw (73) -- (147) [very thick, color=red];
\draw (74) -- (144) [very thick, color=red];
\draw (75) -- (110) [very thick, color=red];
\draw (75) -- (162) [very thick, color=red];
\draw (77) -- (119) [very thick, color=red];
\draw (78) -- (178) [very thick, color=red];
\draw (79) -- (153) [very thick, color=red];
\draw (80) -- (131) [very thick, color=red];
\draw (81) -- (168) [very thick, color=red];
\draw (82) -- (121) [very thick, color=red];
\draw (82) -- (174) [very thick, color=red];
\draw (83) -- (153) [very thick, color=red];
\draw (84) -- (134) [very thick, color=red];
\draw (86) -- (101) [very thick, color=red];
\draw (86) -- (112) [very thick, color=red];
\draw (87) -- (111) [very thick, color=red];
\draw (88) -- (103) [very thick, color=red];
\draw (90) -- (115) [very thick, color=red];
\draw (91) -- (113) [very thick, color=red];
\draw (91) -- (141) [very thick, color=red];
\draw (93) -- (186) [very thick, color=red];
\draw (96) -- (105) [very thick, color=red];
\draw (96) -- (188) [very thick, color=red];
\draw (97) -- (161) [very thick, color=red];
\draw (97) -- (187) [very thick, color=red];
\draw (98) -- (125) [very thick, color=red];
\draw (99) -- (167) [very thick, color=red];
\draw (99) -- (179) [very thick, color=red];
\draw (100) -- (149) [very thick, color=red];
\draw (100) -- (168) [very thick, color=red];
\draw (102) -- (178) [very thick, color=red];
\draw (104) -- (160) [very thick, color=red];
\draw (108) -- (180) [very thick, color=red];
\draw (115) -- (185) [very thick, color=red];
\draw (120) -- (172) [very thick, color=red];
\draw (122) -- (138) [very thick, color=red];
\draw (129) -- (169) [very thick, color=red];
\draw (132) -- (180) [very thick, color=red];
\draw (134) -- (170) [very thick, color=red];
\draw (136) -- (197) [very thick, color=red];
\draw (139) -- (157) [very thick, color=red];
\draw (147) -- (181) [very thick, color=red];
\draw (158) -- (192) [very thick, color=red];
\draw (158) -- (198) [very thick, color=red];
\draw (159) -- (182) [very thick, color=red];
\draw (171) -- (172) [very thick, color=red];
\draw (171) -- (173) [very thick, color=red];
\draw (184) -- (192) [very thick, color=red];
\draw (191) -- (196) [very thick, color=red];
\draw (193) -- (195) [very thick, color=red];
\draw (194) -- (196) [very thick, color=red];
\draw (197) -- (199) [very thick, color=red];
\draw (15) -- (98) [dashed, very thin, color=teal];
\draw (20) -- (47) [dashed, very thin, color=teal];
\draw (23) -- (16) [dashed, very thin, color=teal];
\draw (35) -- (51) [dashed, very thin, color=teal];
\draw (71) -- (57) [dashed, very thin, color=teal];
\draw (85) -- (38) [dashed, very thin, color=teal];
\draw (89) -- (42) [dashed, very thin, color=teal];
\draw (92) -- (110) [dashed, very thin, color=teal];
\draw (94) -- (174) [dashed, very thin, color=teal];
\draw (95) -- (127) [dashed, very thin, color=teal];
\draw (114) -- (93) [dashed, very thin, color=teal];
\draw (116) -- (192) [dashed, very thin, color=teal];
\draw (123) -- (29) [dashed, very thin, color=teal];
\draw (124) -- (47) [dashed, very thin, color=teal];
\draw (133) -- (70) [dashed, very thin, color=teal];
\draw (135) -- (147) [dashed, very thin, color=teal];
\draw (137) -- (143) [dashed, very thin, color=teal];
\draw (140) -- (174) [dashed, very thin, color=teal];
\draw (142) -- (22) [dashed, very thin, color=teal];
\draw (145) -- (181) [dashed, very thin, color=teal];
\draw (148) -- (110) [dashed, very thin, color=teal];
\draw (150) -- (80) [dashed, very thin, color=teal];
\draw (151) -- (0) [dashed, very thin, color=teal];
\draw (154) -- (98) [dashed, very thin, color=teal];
\draw (156) -- (86) [dashed, very thin, color=teal];
\draw (163) -- (31) [dashed, very thin, color=teal];
\draw (164) -- (84) [dashed, very thin, color=teal];
\draw (165) -- (131) [dashed, very thin, color=teal];
\draw (166) -- (122) [dashed, very thin, color=teal];
\draw (177) -- (70) [dashed, very thin, color=teal];
\draw (183) -- (33) [dashed, very thin, color=teal];
\draw (189) -- (187) [dashed, very thin, color=teal];
\draw (190) -- (1) [dashed, very thin, color=teal];

\end{tikzpicture}
\begin{center}An optimal tour with $L=582.33$.\end{center}
\end{adjustbox}
\caption{Comparison between optimal solutions of two tour instances generated from base instance p5, with $c_{v}=2$ and $r_{v}=16.46$ for all vertices $v\in V$.}
\label{final_fig_11}
\end{figure}
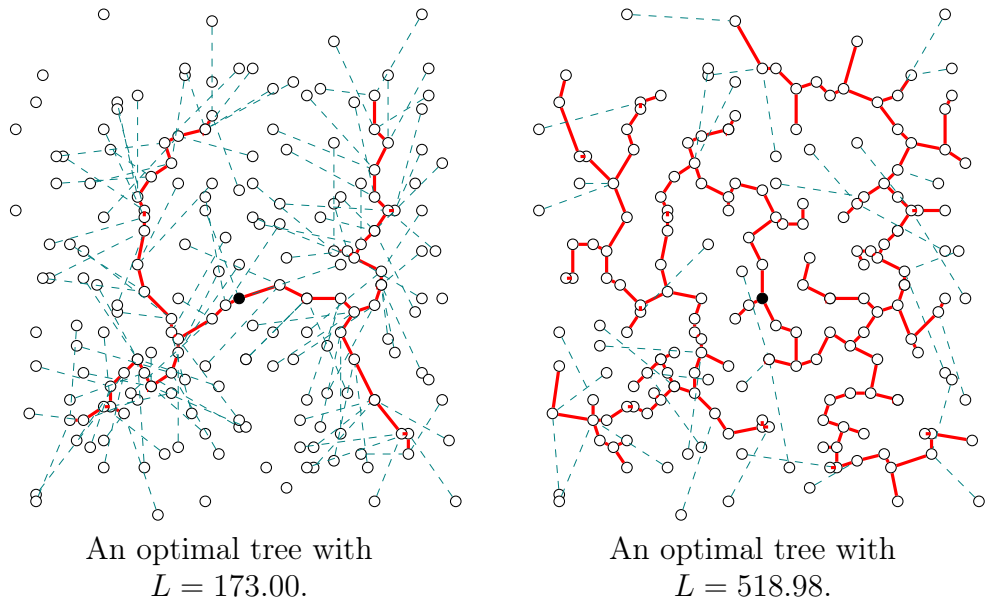
\begin{figure}[t]
\centering
\begin{adjustbox}{valign=t,minipage={.45\textwidth}}
\centering
\begin{tikzpicture}[scale=0.09]
\tikzstyle{every node}=[draw,circle,fill=white,minimum size=4pt,
                            inner sep=0pt]
\draw (35, 35) node (0) [overlay, color=black] {};
\draw (22, 22) node (1) [overlay] {};
\draw (36, 26) node (2) [overlay] {};
\draw (21, 45) node (3) [overlay] {};
\draw (45, 35) node (4) [overlay] {};
\draw (55, 20) node (5) [overlay] {};
\draw (33, 34) node (6) [overlay] {};
\draw (50, 50) node (7) [overlay] {};
\draw (55, 45) node (8) [overlay] {};
\draw (26, 59) node (9) [overlay] {};
\draw (40, 66) node (10) [overlay] {};
\draw (55, 65) node (11) [overlay] {};
\draw (35, 51) node (12) [overlay] {};
\draw (62, 35) node (13) [overlay] {};
\draw (62, 57) node (14) [overlay] {};
\draw (62, 24) node (15) [overlay] {};
\draw (21, 36) node (16) [overlay] {};
\draw (33, 44) node (17) [overlay] {};
\draw (9, 56) node (18) [overlay] {};
\draw (62, 48) node (19) [overlay] {};
\draw (66, 14) node (20) [overlay] {};
\draw (44, 13) node (21) [overlay] {};
\draw (26, 13) node (22) [overlay] {};
\draw (11, 28) node (23) [overlay] {};
\draw (7, 43) node (24) [overlay] {};
\draw (17, 64) node (25) [overlay] {};
\draw (41, 46) node (26) [overlay] {};
\draw (55, 34) node (27) [overlay] {};
\draw (35, 16) node (28) [overlay] {};
\draw (52, 26) node (29) [overlay] {};
\draw (43, 26) node (30) [overlay] {};
\draw (31, 76) node (31) [overlay] {};
\draw (22, 53) node (32) [overlay] {};
\draw (26, 29) node (33) [overlay] {};
\draw (50, 40) node (34) [overlay] {};
\draw (55, 50) node (35) [overlay] {};
\draw (54, 10) node (36) [overlay] {};
\draw (60, 15) node (37) [overlay] {};
\draw (47, 66) node (38) [overlay] {};
\draw (30, 60) node (39) [overlay] {};
\draw (30, 50) node (40) [overlay] {};
\draw (12, 17) node (41) [overlay] {};
\draw (15, 14) node (42) [overlay] {};
\draw (16, 19) node (43) [overlay] {};
\draw (21, 48) node (44) [overlay] {};
\draw (50, 30) node (45) [overlay] {};
\draw (51, 42) node (46) [overlay] {};
\draw (50, 15) node (47) [overlay] {};
\draw (48, 21) node (48) [overlay] {};
\draw (12, 38) node (49) [overlay] {};
\draw (37, 52) node (50) [overlay] {};
\draw (49, 49) node (51) [overlay] {};
\draw (52, 64) node (52) [overlay] {};
\draw (20, 26) node (53) [overlay] {};
\draw (40, 30) node (54) [overlay] {};
\draw (21, 47) node (55) [overlay] {};
\draw (17, 63) node (56) [overlay] {};
\draw (31, 62) node (57) [overlay] {};
\draw (52, 33) node (58) [overlay] {};
\draw (51, 21) node (59) [overlay] {};
\draw (42, 41) node (60) [overlay] {};
\draw (31, 32) node (61) [overlay] {};
\draw (5, 25) node (62) [overlay] {};
\draw (12, 42) node (63) [overlay] {};
\draw (36, 16) node (64) [overlay] {};
\draw (52, 41) node (65) [overlay] {};
\draw (27, 23) node (66) [overlay] {};
\draw (17, 33) node (67) [overlay] {};
\draw (13, 13) node (68) [overlay] {};
\draw (57, 58) node (69) [overlay] {};
\draw (62, 42) node (70) [overlay] {};
\draw (42, 57) node (71) [overlay] {};
\draw (16, 57) node (72) [overlay] {};
\draw (8, 52) node (73) [overlay] {};
\draw (7, 38) node (74) [overlay] {};
\draw (27, 68) node (75) [overlay] {};
\draw (30, 48) node (76) [overlay] {};
\draw (43, 67) node (77) [overlay] {};
\draw (58, 48) node (78) [overlay] {};
\draw (58, 27) node (79) [overlay] {};
\draw (37, 69) node (80) [overlay] {};
\draw (38, 46) node (81) [overlay] {};
\draw (46, 10) node (82) [overlay] {};
\draw (61, 33) node (83) [overlay] {};
\draw (62, 63) node (84) [overlay] {};
\draw (63, 69) node (85) [overlay] {};
\draw (32, 22) node (86) [overlay] {};
\draw (45, 35) node (87) [overlay] {};
\draw (59, 15) node (88) [overlay] {};
\draw (5, 6) node (89) [overlay] {};
\draw (10, 17) node (90) [overlay] {};
\draw (21, 10) node (91) [overlay] {};
\draw (5, 64) node (92) [overlay] {};
\draw (30, 15) node (93) [overlay] {};
\draw (39, 10) node (94) [overlay] {};
\draw (32, 39) node (95) [overlay] {};
\draw (25, 32) node (96) [overlay] {};
\draw (25, 55) node (97) [overlay] {};
\draw (48, 28) node (98) [overlay] {};
\draw (56, 37) node (99) [overlay] {};
\draw (41, 49) node (100) [overlay] {};
\draw (35, 17) node (101) [overlay] {};
\draw (55, 45) node (102) [overlay] {};
\draw (55, 20) node (103) [overlay] {};
\draw (15, 30) node (104) [overlay] {};
\draw (25, 30) node (105) [overlay] {};
\draw (20, 50) node (106) [overlay] {};
\draw (10, 43) node (107) [overlay] {};
\draw (55, 60) node (108) [overlay] {};
\draw (30, 60) node (109) [overlay] {};
\draw (20, 65) node (110) [overlay] {};
\draw (50, 35) node (111) [overlay] {};
\draw (30, 25) node (112) [overlay] {};
\draw (15, 10) node (113) [overlay] {};
\draw (30, 5) node (114) [overlay] {};
\draw (10, 20) node (115) [overlay] {};
\draw (5, 30) node (116) [overlay] {};
\draw (20, 40) node (117) [overlay] {};
\draw (15, 60) node (118) [overlay] {};
\draw (45, 65) node (119) [overlay] {};
\draw (45, 20) node (120) [overlay] {};
\draw (45, 10) node (121) [overlay] {};
\draw (55, 5) node (122) [overlay] {};
\draw (65, 35) node (123) [overlay] {};
\draw (65, 20) node (124) [overlay] {};
\draw (45, 30) node (125) [overlay] {};
\draw (35, 40) node (126) [overlay] {};
\draw (41, 37) node (127) [overlay] {};
\draw (64, 42) node (128) [overlay] {};
\draw (40, 60) node (129) [overlay] {};
\draw (31, 52) node (130) [overlay] {};
\draw (35, 69) node (131) [overlay] {};
\draw (53, 52) node (132) [overlay] {};
\draw (65, 55) node (133) [overlay] {};
\draw (63, 65) node (134) [overlay] {};
\draw (2, 60) node (135) [overlay] {};
\draw (20, 20) node (136) [overlay] {};
\draw (5, 5) node (137) [overlay] {};
\draw (60, 12) node (138) [overlay] {};
\draw (40, 25) node (139) [overlay] {};
\draw (42, 7) node (140) [overlay] {};
\draw (24, 12) node (141) [overlay] {};
\draw (23, 3) node (142) [overlay] {};
\draw (11, 14) node (143) [overlay] {};
\draw (6, 38) node (144) [overlay] {};
\draw (2, 48) node (145) [overlay] {};
\draw (8, 56) node (146) [overlay] {};
\draw (13, 52) node (147) [overlay] {};
\draw (6, 68) node (148) [overlay] {};
\draw (47, 47) node (149) [overlay] {};
\draw (49, 58) node (150) [overlay] {};
\draw (27, 43) node (151) [overlay] {};
\draw (37, 31) node (152) [overlay] {};
\draw (57, 29) node (153) [overlay] {};
\draw (63, 23) node (154) [overlay] {};
\draw (53, 12) node (155) [overlay] {};
\draw (32, 12) node (156) [overlay] {};
\draw (36, 26) node (157) [overlay] {};
\draw (21, 24) node (158) [overlay] {};
\draw (17, 34) node (159) [overlay] {};
\draw (12, 24) node (160) [overlay] {};
\draw (24, 58) node (161) [overlay] {};
\draw (27, 69) node (162) [overlay] {};
\draw (15, 77) node (163) [overlay] {};
\draw (62, 77) node (164) [overlay] {};
\draw (49, 73) node (165) [overlay] {};
\draw (67, 5) node (166) [overlay] {};
\draw (56, 39) node (167) [overlay] {};
\draw (37, 47) node (168) [overlay] {};
\draw (37, 56) node (169) [overlay] {};
\draw (57, 68) node (170) [overlay] {};
\draw (47, 16) node (171) [overlay] {};
\draw (44, 17) node (172) [overlay] {};
\draw (46, 13) node (173) [overlay] {};
\draw (49, 11) node (174) [overlay] {};
\draw (49, 42) node (175) [overlay] {};
\draw (53, 43) node (176) [overlay] {};
\draw (61, 52) node (177) [overlay] {};
\draw (57, 48) node (178) [overlay] {};
\draw (56, 37) node (179) [overlay] {};
\draw (55, 54) node (180) [overlay] {};
\draw (15, 47) node (181) [overlay] {};
\draw (14, 37) node (182) [overlay] {};
\draw (11, 31) node (183) [overlay] {};
\draw (16, 22) node (184) [overlay] {};
\draw (4, 18) node (185) [overlay] {};
\draw (28, 18) node (186) [overlay] {};
\draw (26, 52) node (187) [overlay] {};
\draw (26, 35) node (188) [overlay] {};
\draw (31, 67) node (189) [overlay] {};
\draw (15, 19) node (190) [overlay] {};
\draw (22, 22) node (191) [overlay] {};
\draw (18, 24) node (192) [overlay] {};
\draw (26, 27) node (193) [overlay] {};
\draw (25, 24) node (194) [overlay] {};
\draw (22, 27) node (195) [overlay] {};
\draw (25, 21) node (196) [overlay] {};
\draw (19, 21) node (197) [overlay] {};
\draw (20, 26) node (198) [overlay] {};
\draw (18, 18) node (199) [overlay] {};
\draw (0) -- (6) [very thick, color=red];
\draw (0) -- (127) [very thick, color=red];
\draw (1) -- (191) [very thick, color=red];
\draw (3) -- (55) [very thick, color=red];
\draw (3) -- (117) [very thick, color=red];
\draw (4) -- (87) [very thick, color=red];
\draw (5) -- (103) [very thick, color=red];
\draw (6) -- (61) [very thick, color=red];
\draw (8) -- (102) [very thick, color=red];
\draw (9) -- (109) [very thick, color=red];
\draw (9) -- (161) [very thick, color=red];
\draw (11) -- (108) [very thick, color=red];
\draw (16) -- (96) [very thick, color=red];
\draw (16) -- (117) [very thick, color=red];
\draw (27) -- (58) [very thick, color=red];
\draw (27) -- (179) [very thick, color=red];
\draw (29) -- (45) [very thick, color=red];
\draw (29) -- (103) [very thick, color=red];
\draw (32) -- (97) [very thick, color=red];
\draw (32) -- (106) [very thick, color=red];
\draw (33) -- (61) [very thick, color=red];
\draw (33) -- (105) [very thick, color=red];
\draw (33) -- (193) [very thick, color=red];
\draw (35) -- (178) [very thick, color=red];
\draw (35) -- (180) [very thick, color=red];
\draw (37) -- (88) [very thick, color=red];
\draw (37) -- (138) [very thick, color=red];
\draw (39) -- (109) [very thick, color=red];
\draw (41) -- (90) [very thick, color=red];
\draw (41) -- (190) [very thick, color=red];
\draw (43) -- (184) [very thick, color=red];
\draw (43) -- (190) [very thick, color=red];
\draw (43) -- (199) [very thick, color=red];
\draw (44) -- (55) [very thick, color=red];
\draw (44) -- (106) [very thick, color=red];
\draw (45) -- (58) [very thick, color=red];
\draw (46) -- (65) [very thick, color=red];
\draw (46) -- (175) [very thick, color=red];
\draw (53) -- (198) [very thick, color=red];
\draw (57) -- (109) [very thick, color=red];
\draw (58) -- (111) [very thick, color=red];
\draw (65) -- (167) [very thick, color=red];
\draw (65) -- (176) [very thick, color=red];
\draw (69) -- (108) [very thick, color=red];
\draw (69) -- (180) [very thick, color=red];
\draw (78) -- (178) [very thick, color=red];
\draw (87) -- (111) [very thick, color=red];
\draw (87) -- (127) [very thick, color=red];
\draw (88) -- (103) [very thick, color=red];
\draw (96) -- (105) [very thick, color=red];
\draw (97) -- (161) [very thick, color=red];
\draw (99) -- (179) [very thick, color=red];
\draw (102) -- (176) [very thick, color=red];
\draw (102) -- (178) [very thick, color=red];
\draw (158) -- (191) [very thick, color=red];
\draw (158) -- (198) [very thick, color=red];
\draw (167) -- (179) [very thick, color=red];
\draw (184) -- (192) [very thick, color=red];
\draw (191) -- (194) [very thick, color=red];
\draw (192) -- (198) [very thick, color=red];
\draw (193) -- (194) [very thick, color=red];
\draw (2) -- (87) [dashed, very thin, color=teal];
\draw (7) -- (102) [dashed, very thin, color=teal];
\draw (10) -- (39) [dashed, very thin, color=teal];
\draw (12) -- (57) [dashed, very thin, color=teal];
\draw (13) -- (65) [dashed, very thin, color=teal];
\draw (14) -- (78) [dashed, very thin, color=teal];
\draw (15) -- (99) [dashed, very thin, color=teal];
\draw (17) -- (105) [dashed, very thin, color=teal];
\draw (18) -- (44) [dashed, very thin, color=teal];
\draw (19) -- (167) [dashed, very thin, color=teal];
\draw (20) -- (5) [dashed, very thin, color=teal];
\draw (21) -- (88) [dashed, very thin, color=teal];
\draw (22) -- (33) [dashed, very thin, color=teal];
\draw (23) -- (194) [dashed, very thin, color=teal];
\draw (24) -- (55) [dashed, very thin, color=teal];
\draw (25) -- (106) [dashed, very thin, color=teal];
\draw (26) -- (6) [dashed, very thin, color=teal];
\draw (28) -- (158) [dashed, very thin, color=teal];
\draw (30) -- (27) [dashed, very thin, color=teal];
\draw (31) -- (57) [dashed, very thin, color=teal];
\draw (34) -- (0) [dashed, very thin, color=teal];
\draw (36) -- (29) [dashed, very thin, color=teal];
\draw (38) -- (11) [dashed, very thin, color=teal];
\draw (40) -- (6) [dashed, very thin, color=teal];
\draw (42) -- (199) [dashed, very thin, color=teal];
\draw (47) -- (45) [dashed, very thin, color=teal];
\draw (48) -- (27) [dashed, very thin, color=teal];
\draw (49) -- (198) [dashed, very thin, color=teal];
\draw (50) -- (175) [dashed, very thin, color=teal];
\draw (51) -- (46) [dashed, very thin, color=teal];
\draw (52) -- (35) [dashed, very thin, color=teal];
\draw (54) -- (4) [dashed, very thin, color=teal];
\draw (56) -- (44) [dashed, very thin, color=teal];
\draw (59) -- (4) [dashed, very thin, color=teal];
\draw (60) -- (46) [dashed, very thin, color=teal];
\draw (62) -- (184) [dashed, very thin, color=teal];
\draw (63) -- (96) [dashed, very thin, color=teal];
\draw (64) -- (191) [dashed, very thin, color=teal];
\draw (67) -- (96) [dashed, very thin, color=teal];
\draw (68) -- (184) [dashed, very thin, color=teal];
\draw (70) -- (179) [dashed, very thin, color=teal];
\draw (71) -- (35) [dashed, very thin, color=teal];
\draw (72) -- (32) [dashed, very thin, color=teal];
\draw (73) -- (32) [dashed, very thin, color=teal];
\draw (74) -- (3) [dashed, very thin, color=teal];
\draw (75) -- (97) [dashed, very thin, color=teal];
\draw (76) -- (0) [dashed, very thin, color=teal];
\draw (77) -- (109) [dashed, very thin, color=teal];
\draw (79) -- (99) [dashed, very thin, color=teal];
\draw (80) -- (39) [dashed, very thin, color=teal];
\draw (81) -- (176) [dashed, very thin, color=teal];
\draw (82) -- (103) [dashed, very thin, color=teal];
\draw (83) -- (8) [dashed, very thin, color=teal];
\draw (84) -- (78) [dashed, very thin, color=teal];
\draw (85) -- (69) [dashed, very thin, color=teal];
\draw (86) -- (198) [dashed, very thin, color=teal];
\draw (89) -- (190) [dashed, very thin, color=teal];
\draw (91) -- (53) [dashed, very thin, color=teal];
\draw (93) -- (1) [dashed, very thin, color=teal];
\draw (95) -- (193) [dashed, very thin, color=teal];
\draw (98) -- (111) [dashed, very thin, color=teal];
\draw (100) -- (178) [dashed, very thin, color=teal];
\draw (101) -- (33) [dashed, very thin, color=teal];
\draw (104) -- (43) [dashed, very thin, color=teal];
\draw (107) -- (16) [dashed, very thin, color=teal];
\draw (110) -- (106) [dashed, very thin, color=teal];
\draw (112) -- (127) [dashed, very thin, color=teal];
\draw (113) -- (192) [dashed, very thin, color=teal];
\draw (115) -- (158) [dashed, very thin, color=teal];
\draw (118) -- (55) [dashed, very thin, color=teal];
\draw (119) -- (180) [dashed, very thin, color=teal];
\draw (120) -- (111) [dashed, very thin, color=teal];
\draw (121) -- (37) [dashed, very thin, color=teal];
\draw (122) -- (88) [dashed, very thin, color=teal];
\draw (124) -- (138) [dashed, very thin, color=teal];
\draw (125) -- (167) [dashed, very thin, color=teal];
\draw (126) -- (193) [dashed, very thin, color=teal];
\draw (128) -- (178) [dashed, very thin, color=teal];
\draw (129) -- (180) [dashed, very thin, color=teal];
\draw (130) -- (117) [dashed, very thin, color=teal];
\draw (131) -- (161) [dashed, very thin, color=teal];
\draw (132) -- (176) [dashed, very thin, color=teal];
\draw (133) -- (8) [dashed, very thin, color=teal];
\draw (134) -- (108) [dashed, very thin, color=teal];
\draw (136) -- (41) [dashed, very thin, color=teal];
\draw (137) -- (90) [dashed, very thin, color=teal];
\draw (139) -- (58) [dashed, very thin, color=teal];
\draw (141) -- (190) [dashed, very thin, color=teal];
\draw (142) -- (199) [dashed, very thin, color=teal];
\draw (143) -- (192) [dashed, very thin, color=teal];
\draw (144) -- (16) [dashed, very thin, color=teal];
\draw (146) -- (161) [dashed, very thin, color=teal];
\draw (147) -- (97) [dashed, very thin, color=teal];
\draw (149) -- (102) [dashed, very thin, color=teal];
\draw (150) -- (175) [dashed, very thin, color=teal];
\draw (151) -- (127) [dashed, very thin, color=teal];
\draw (152) -- (58) [dashed, very thin, color=teal];
\draw (153) -- (179) [dashed, very thin, color=teal];
\draw (155) -- (138) [dashed, very thin, color=teal];
\draw (156) -- (194) [dashed, very thin, color=teal];
\draw (157) -- (87) [dashed, very thin, color=teal];
\draw (159) -- (3) [dashed, very thin, color=teal];
\draw (160) -- (1) [dashed, very thin, color=teal];
\draw (162) -- (109) [dashed, very thin, color=teal];
\draw (164) -- (11) [dashed, very thin, color=teal];
\draw (165) -- (108) [dashed, very thin, color=teal];
\draw (166) -- (37) [dashed, very thin, color=teal];
\draw (168) -- (65) [dashed, very thin, color=teal];
\draw (169) -- (9) [dashed, very thin, color=teal];
\draw (170) -- (69) [dashed, very thin, color=teal];
\draw (171) -- (45) [dashed, very thin, color=teal];
\draw (172) -- (5) [dashed, very thin, color=teal];
\draw (173) -- (103) [dashed, very thin, color=teal];
\draw (174) -- (29) [dashed, very thin, color=teal];
\draw (181) -- (9) [dashed, very thin, color=teal];
\draw (182) -- (53) [dashed, very thin, color=teal];
\draw (183) -- (90) [dashed, very thin, color=teal];
\draw (185) -- (41) [dashed, very thin, color=teal];
\draw (186) -- (105) [dashed, very thin, color=teal];
\draw (187) -- (117) [dashed, very thin, color=teal];
\draw (188) -- (61) [dashed, very thin, color=teal];
\draw (195) -- (191) [dashed, very thin, color=teal];
\draw (196) -- (43) [dashed, very thin, color=teal];
\draw (197) -- (61) [dashed, very thin, color=teal];
\end{tikzpicture}
\begin{center}An optimal tree with $L=173.00$.\end{center}
\end{adjustbox}
\hfill
\begin{adjustbox}{valign=t,minipage={.45\textwidth}}
\centering
\begin{tikzpicture}[scale=0.09]
\tikzstyle{every node}=[draw,circle,fill=white,minimum size=4pt,
                            inner sep=0pt]
\draw (35, 35) node (0) [overlay, color=black] {};
\draw (22, 22) node (1) [overlay] {};
\draw (36, 26) node (2) [overlay] {};
\draw (21, 45) node (3) [overlay] {};
\draw (45, 35) node (4) [overlay] {};
\draw (55, 20) node (5) [overlay] {};
\draw (33, 34) node (6) [overlay] {};
\draw (50, 50) node (7) [overlay] {};
\draw (55, 45) node (8) [overlay] {};
\draw (26, 59) node (9) [overlay] {};
\draw (40, 66) node (10) [overlay] {};
\draw (55, 65) node (11) [overlay] {};
\draw (35, 51) node (12) [overlay] {};
\draw (62, 35) node (13) [overlay] {};
\draw (62, 57) node (14) [overlay] {};
\draw (62, 24) node (15) [overlay] {};
\draw (21, 36) node (16) [overlay] {};
\draw (33, 44) node (17) [overlay] {};
\draw (9, 56) node (18) [overlay] {};
\draw (62, 48) node (19) [overlay] {};
\draw (66, 14) node (20) [overlay] {};
\draw (44, 13) node (21) [overlay] {};
\draw (26, 13) node (22) [overlay] {};
\draw (11, 28) node (23) [overlay] {};
\draw (7, 43) node (24) [overlay] {};
\draw (17, 64) node (25) [overlay] {};
\draw (41, 46) node (26) [overlay] {};
\draw (55, 34) node (27) [overlay] {};
\draw (35, 16) node (28) [overlay] {};
\draw (52, 26) node (29) [overlay] {};
\draw (43, 26) node (30) [overlay] {};
\draw (31, 76) node (31) [overlay] {};
\draw (22, 53) node (32) [overlay] {};
\draw (26, 29) node (33) [overlay] {};
\draw (50, 40) node (34) [overlay] {};
\draw (55, 50) node (35) [overlay] {};
\draw (54, 10) node (36) [overlay] {};
\draw (60, 15) node (37) [overlay] {};
\draw (47, 66) node (38) [overlay] {};
\draw (30, 60) node (39) [overlay] {};
\draw (30, 50) node (40) [overlay] {};
\draw (12, 17) node (41) [overlay] {};
\draw (15, 14) node (42) [overlay] {};
\draw (16, 19) node (43) [overlay] {};
\draw (21, 48) node (44) [overlay] {};
\draw (50, 30) node (45) [overlay] {};
\draw (51, 42) node (46) [overlay] {};
\draw (50, 15) node (47) [overlay] {};
\draw (48, 21) node (48) [overlay] {};
\draw (12, 38) node (49) [overlay] {};
\draw (37, 52) node (50) [overlay] {};
\draw (49, 49) node (51) [overlay] {};
\draw (52, 64) node (52) [overlay] {};
\draw (20, 26) node (53) [overlay] {};
\draw (40, 30) node (54) [overlay] {};
\draw (21, 47) node (55) [overlay] {};
\draw (17, 63) node (56) [overlay] {};
\draw (31, 62) node (57) [overlay] {};
\draw (52, 33) node (58) [overlay] {};
\draw (51, 21) node (59) [overlay] {};
\draw (42, 41) node (60) [overlay] {};
\draw (31, 32) node (61) [overlay] {};
\draw (5, 25) node (62) [overlay] {};
\draw (12, 42) node (63) [overlay] {};
\draw (36, 16) node (64) [overlay] {};
\draw (52, 41) node (65) [overlay] {};
\draw (27, 23) node (66) [overlay] {};
\draw (17, 33) node (67) [overlay] {};
\draw (13, 13) node (68) [overlay] {};
\draw (57, 58) node (69) [overlay] {};
\draw (62, 42) node (70) [overlay] {};
\draw (42, 57) node (71) [overlay] {};
\draw (16, 57) node (72) [overlay] {};
\draw (8, 52) node (73) [overlay] {};
\draw (7, 38) node (74) [overlay] {};
\draw (27, 68) node (75) [overlay] {};
\draw (30, 48) node (76) [overlay] {};
\draw (43, 67) node (77) [overlay] {};
\draw (58, 48) node (78) [overlay] {};
\draw (58, 27) node (79) [overlay] {};
\draw (37, 69) node (80) [overlay] {};
\draw (38, 46) node (81) [overlay] {};
\draw (46, 10) node (82) [overlay] {};
\draw (61, 33) node (83) [overlay] {};
\draw (62, 63) node (84) [overlay] {};
\draw (63, 69) node (85) [overlay] {};
\draw (32, 22) node (86) [overlay] {};
\draw (45, 35) node (87) [overlay] {};
\draw (59, 15) node (88) [overlay] {};
\draw (5, 6) node (89) [overlay] {};
\draw (10, 17) node (90) [overlay] {};
\draw (21, 10) node (91) [overlay] {};
\draw (5, 64) node (92) [overlay] {};
\draw (30, 15) node (93) [overlay] {};
\draw (39, 10) node (94) [overlay] {};
\draw (32, 39) node (95) [overlay] {};
\draw (25, 32) node (96) [overlay] {};
\draw (25, 55) node (97) [overlay] {};
\draw (48, 28) node (98) [overlay] {};
\draw (56, 37) node (99) [overlay] {};
\draw (41, 49) node (100) [overlay] {};
\draw (35, 17) node (101) [overlay] {};
\draw (55, 45) node (102) [overlay] {};
\draw (55, 20) node (103) [overlay] {};
\draw (15, 30) node (104) [overlay] {};
\draw (25, 30) node (105) [overlay] {};
\draw (20, 50) node (106) [overlay] {};
\draw (10, 43) node (107) [overlay] {};
\draw (55, 60) node (108) [overlay] {};
\draw (30, 60) node (109) [overlay] {};
\draw (20, 65) node (110) [overlay] {};
\draw (50, 35) node (111) [overlay] {};
\draw (30, 25) node (112) [overlay] {};
\draw (15, 10) node (113) [overlay] {};
\draw (30, 5) node (114) [overlay] {};
\draw (10, 20) node (115) [overlay] {};
\draw (5, 30) node (116) [overlay] {};
\draw (20, 40) node (117) [overlay] {};
\draw (15, 60) node (118) [overlay] {};
\draw (45, 65) node (119) [overlay] {};
\draw (45, 20) node (120) [overlay] {};
\draw (45, 10) node (121) [overlay] {};
\draw (55, 5) node (122) [overlay] {};
\draw (65, 35) node (123) [overlay] {};
\draw (65, 20) node (124) [overlay] {};
\draw (45, 30) node (125) [overlay] {};
\draw (35, 40) node (126) [overlay] {};
\draw (41, 37) node (127) [overlay] {};
\draw (64, 42) node (128) [overlay] {};
\draw (40, 60) node (129) [overlay] {};
\draw (31, 52) node (130) [overlay] {};
\draw (35, 69) node (131) [overlay] {};
\draw (53, 52) node (132) [overlay] {};
\draw (65, 55) node (133) [overlay] {};
\draw (63, 65) node (134) [overlay] {};
\draw (2, 60) node (135) [overlay] {};
\draw (20, 20) node (136) [overlay] {};
\draw (5, 5) node (137) [overlay] {};
\draw (60, 12) node (138) [overlay] {};
\draw (40, 25) node (139) [overlay] {};
\draw (42, 7) node (140) [overlay] {};
\draw (24, 12) node (141) [overlay] {};
\draw (23, 3) node (142) [overlay] {};
\draw (11, 14) node (143) [overlay] {};
\draw (6, 38) node (144) [overlay] {};
\draw (2, 48) node (145) [overlay] {};
\draw (8, 56) node (146) [overlay] {};
\draw (13, 52) node (147) [overlay] {};
\draw (6, 68) node (148) [overlay] {};
\draw (47, 47) node (149) [overlay] {};
\draw (49, 58) node (150) [overlay] {};
\draw (27, 43) node (151) [overlay] {};
\draw (37, 31) node (152) [overlay] {};
\draw (57, 29) node (153) [overlay] {};
\draw (63, 23) node (154) [overlay] {};
\draw (53, 12) node (155) [overlay] {};
\draw (32, 12) node (156) [overlay] {};
\draw (36, 26) node (157) [overlay] {};
\draw (21, 24) node (158) [overlay] {};
\draw (17, 34) node (159) [overlay] {};
\draw (12, 24) node (160) [overlay] {};
\draw (24, 58) node (161) [overlay] {};
\draw (27, 69) node (162) [overlay] {};
\draw (15, 77) node (163) [overlay] {};
\draw (62, 77) node (164) [overlay] {};
\draw (49, 73) node (165) [overlay] {};
\draw (67, 5) node (166) [overlay] {};
\draw (56, 39) node (167) [overlay] {};
\draw (37, 47) node (168) [overlay] {};
\draw (37, 56) node (169) [overlay] {};
\draw (57, 68) node (170) [overlay] {};
\draw (47, 16) node (171) [overlay] {};
\draw (44, 17) node (172) [overlay] {};
\draw (46, 13) node (173) [overlay] {};
\draw (49, 11) node (174) [overlay] {};
\draw (49, 42) node (175) [overlay] {};
\draw (53, 43) node (176) [overlay] {};
\draw (61, 52) node (177) [overlay] {};
\draw (57, 48) node (178) [overlay] {};
\draw (56, 37) node (179) [overlay] {};
\draw (55, 54) node (180) [overlay] {};
\draw (15, 47) node (181) [overlay] {};
\draw (14, 37) node (182) [overlay] {};
\draw (11, 31) node (183) [overlay] {};
\draw (16, 22) node (184) [overlay] {};
\draw (4, 18) node (185) [overlay] {};
\draw (28, 18) node (186) [overlay] {};
\draw (26, 52) node (187) [overlay] {};
\draw (26, 35) node (188) [overlay] {};
\draw (31, 67) node (189) [overlay] {};
\draw (15, 19) node (190) [overlay] {};
\draw (22, 22) node (191) [overlay] {};
\draw (18, 24) node (192) [overlay] {};
\draw (26, 27) node (193) [overlay] {};
\draw (25, 24) node (194) [overlay] {};
\draw (22, 27) node (195) [overlay] {};
\draw (25, 21) node (196) [overlay] {};
\draw (19, 21) node (197) [overlay] {};
\draw (20, 26) node (198) [overlay] {};
\draw (18, 18) node (199) [overlay] {};
\draw (0) -- (6) [very thick, color=red];
\draw (0) -- (126) [very thick, color=red];
\draw (0) -- (152) [very thick, color=red];
\draw (1) -- (191) [very thick, color=red];
\draw (2) -- (157) [very thick, color=red];
\draw (3) -- (55) [very thick, color=red];
\draw (3) -- (117) [very thick, color=red];
\draw (4) -- (87) [very thick, color=red];
\draw (5) -- (103) [very thick, color=red];
\draw (6) -- (61) [very thick, color=red];
\draw (7) -- (51) [very thick, color=red];
\draw (7) -- (132) [very thick, color=red];
\draw (8) -- (102) [very thick, color=red];
\draw (9) -- (109) [very thick, color=red];
\draw (9) -- (161) [very thick, color=red];
\draw (10) -- (77) [very thick, color=red];
\draw (10) -- (80) [very thick, color=red];
\draw (10) -- (129) [very thick, color=red];
\draw (11) -- (52) [very thick, color=red];
\draw (11) -- (170) [very thick, color=red];
\draw (12) -- (130) [very thick, color=red];
\draw (12) -- (168) [very thick, color=red];
\draw (13) -- (83) [very thick, color=red];
\draw (14) -- (69) [very thick, color=red];
\draw (14) -- (84) [very thick, color=red];
\draw (14) -- (133) [very thick, color=red];
\draw (16) -- (117) [very thick, color=red];
\draw (16) -- (159) [very thick, color=red];
\draw (16) -- (188) [very thick, color=red];
\draw (17) -- (126) [very thick, color=red];
\draw (17) -- (168) [very thick, color=red];
\draw (18) -- (146) [very thick, color=red];
\draw (18) -- (147) [very thick, color=red];
\draw (19) -- (78) [very thick, color=red];
\draw (20) -- (37) [very thick, color=red];
\draw (21) -- (173) [very thick, color=red];
\draw (24) -- (74) [very thick, color=red];
\draw (24) -- (107) [very thick, color=red];
\draw (25) -- (56) [very thick, color=red];
\draw (25) -- (110) [very thick, color=red];
\draw (26) -- (81) [very thick, color=red];
\draw (26) -- (100) [very thick, color=red];
\draw (27) -- (58) [very thick, color=red];
\draw (27) -- (153) [very thick, color=red];
\draw (27) -- (179) [very thick, color=red];
\draw (28) -- (64) [very thick, color=red];
\draw (28) -- (93) [very thick, color=red];
\draw (28) -- (101) [very thick, color=red];
\draw (29) -- (59) [very thick, color=red];
\draw (29) -- (98) [very thick, color=red];
\draw (30) -- (125) [very thick, color=red];
\draw (30) -- (139) [very thick, color=red];
\draw (31) -- (131) [very thick, color=red];
\draw (32) -- (97) [very thick, color=red];
\draw (32) -- (106) [very thick, color=red];
\draw (33) -- (105) [very thick, color=red];
\draw (33) -- (193) [very thick, color=red];
\draw (34) -- (175) [very thick, color=red];
\draw (35) -- (132) [very thick, color=red];
\draw (35) -- (178) [very thick, color=red];
\draw (36) -- (122) [very thick, color=red];
\draw (36) -- (138) [very thick, color=red];
\draw (36) -- (155) [very thick, color=red];
\draw (37) -- (88) [very thick, color=red];
\draw (37) -- (138) [very thick, color=red];
\draw (38) -- (52) [very thick, color=red];
\draw (38) -- (119) [very thick, color=red];
\draw (38) -- (165) [very thick, color=red];
\draw (39) -- (109) [very thick, color=red];
\draw (40) -- (76) [very thick, color=red];
\draw (40) -- (130) [very thick, color=red];
\draw (40) -- (187) [very thick, color=red];
\draw (41) -- (90) [very thick, color=red];
\draw (41) -- (190) [very thick, color=red];
\draw (42) -- (68) [very thick, color=red];
\draw (43) -- (190) [very thick, color=red];
\draw (43) -- (199) [very thick, color=red];
\draw (44) -- (55) [very thick, color=red];
\draw (44) -- (106) [very thick, color=red];
\draw (45) -- (58) [very thick, color=red];
\draw (45) -- (98) [very thick, color=red];
\draw (46) -- (65) [very thick, color=red];
\draw (46) -- (175) [very thick, color=red];
\draw (47) -- (171) [very thick, color=red];
\draw (48) -- (59) [very thick, color=red];
\draw (48) -- (120) [very thick, color=red];
\draw (49) -- (63) [very thick, color=red];
\draw (49) -- (182) [very thick, color=red];
\draw (51) -- (149) [very thick, color=red];
\draw (52) -- (108) [very thick, color=red];
\draw (53) -- (198) [very thick, color=red];
\draw (54) -- (139) [very thick, color=red];
\draw (54) -- (152) [very thick, color=red];
\draw (56) -- (118) [very thick, color=red];
\draw (57) -- (109) [very thick, color=red];
\draw (58) -- (111) [very thick, color=red];
\draw (59) -- (103) [very thick, color=red];
\draw (60) -- (127) [very thick, color=red];
\draw (62) -- (185) [very thick, color=red];
\draw (63) -- (107) [very thick, color=red];
\draw (63) -- (181) [very thick, color=red];
\draw (65) -- (167) [very thick, color=red];
\draw (65) -- (176) [very thick, color=red];
\draw (67) -- (104) [very thick, color=red];
\draw (67) -- (159) [very thick, color=red];
\draw (68) -- (113) [very thick, color=red];
\draw (68) -- (143) [very thick, color=red];
\draw (69) -- (108) [very thick, color=red];
\draw (69) -- (180) [very thick, color=red];
\draw (72) -- (118) [very thick, color=red];
\draw (72) -- (147) [very thick, color=red];
\draw (74) -- (144) [very thick, color=red];
\draw (77) -- (119) [very thick, color=red];
\draw (78) -- (178) [very thick, color=red];
\draw (79) -- (153) [very thick, color=red];
\draw (80) -- (131) [very thick, color=red];
\draw (81) -- (168) [very thick, color=red];
\draw (82) -- (121) [very thick, color=red];
\draw (82) -- (173) [very thick, color=red];
\draw (82) -- (174) [very thick, color=red];
\draw (83) -- (153) [very thick, color=red];
\draw (84) -- (134) [very thick, color=red];
\draw (87) -- (111) [very thick, color=red];
\draw (87) -- (127) [very thick, color=red];
\draw (90) -- (115) [very thick, color=red];
\draw (90) -- (143) [very thick, color=red];
\draw (90) -- (185) [very thick, color=red];
\draw (92) -- (146) [very thick, color=red];
\draw (92) -- (148) [very thick, color=red];
\draw (93) -- (186) [very thick, color=red];
\draw (96) -- (105) [very thick, color=red];
\draw (96) -- (188) [very thick, color=red];
\draw (97) -- (161) [very thick, color=red];
\draw (97) -- (187) [very thick, color=red];
\draw (98) -- (125) [very thick, color=red];
\draw (99) -- (179) [very thick, color=red];
\draw (102) -- (176) [very thick, color=red];
\draw (102) -- (178) [very thick, color=red];
\draw (112) -- (193) [very thick, color=red];
\draw (120) -- (172) [very thick, color=red];
\draw (132) -- (180) [very thick, color=red];
\draw (136) -- (191) [very thick, color=red];
\draw (136) -- (197) [very thick, color=red];
\draw (136) -- (199) [very thick, color=red];
\draw (139) -- (157) [very thick, color=red];
\draw (147) -- (181) [very thick, color=red];
\draw (155) -- (174) [very thick, color=red];
\draw (158) -- (191) [very thick, color=red];
\draw (158) -- (198) [very thick, color=red];
\draw (159) -- (182) [very thick, color=red];
\draw (167) -- (179) [very thick, color=red];
\draw (171) -- (172) [very thick, color=red];
\draw (171) -- (173) [very thick, color=red];
\draw (184) -- (192) [very thick, color=red];
\draw (186) -- (196) [very thick, color=red];
\draw (191) -- (196) [very thick, color=red];
\draw (192) -- (198) [very thick, color=red];
\draw (193) -- (194) [very thick, color=red];
\draw (194) -- (196) [very thick, color=red];
\draw (195) -- (198) [very thick, color=red];
\draw (15) -- (167) [dashed, very thin, color=teal];
\draw (22) -- (192) [dashed, very thin, color=teal];
\draw (23) -- (193) [dashed, very thin, color=teal];
\draw (50) -- (7) [dashed, very thin, color=teal];
\draw (66) -- (192) [dashed, very thin, color=teal];
\draw (70) -- (51) [dashed, very thin, color=teal];
\draw (71) -- (51) [dashed, very thin, color=teal];
\draw (73) -- (147) [dashed, very thin, color=teal];
\draw (75) -- (131) [dashed, very thin, color=teal];
\draw (85) -- (170) [dashed, very thin, color=teal];
\draw (86) -- (157) [dashed, very thin, color=teal];
\draw (89) -- (185) [dashed, very thin, color=teal];
\draw (91) -- (90) [dashed, very thin, color=teal];
\draw (94) -- (2) [dashed, very thin, color=teal];
\draw (95) -- (2) [dashed, very thin, color=teal];
\draw (114) -- (64) [dashed, very thin, color=teal];
\draw (116) -- (115) [dashed, very thin, color=teal];
\draw (123) -- (178) [dashed, very thin, color=teal];
\draw (124) -- (83) [dashed, very thin, color=teal];
\draw (128) -- (167) [dashed, very thin, color=teal];
\draw (135) -- (25) [dashed, very thin, color=teal];
\draw (137) -- (90) [dashed, very thin, color=teal];
\draw (140) -- (174) [dashed, very thin, color=teal];
\draw (141) -- (193) [dashed, very thin, color=teal];
\draw (142) -- (186) [dashed, very thin, color=teal];
\draw (145) -- (147) [dashed, very thin, color=teal];
\draw (150) -- (178) [dashed, very thin, color=teal];
\draw (151) -- (67) [dashed, very thin, color=teal];
\draw (154) -- (138) [dashed, very thin, color=teal];
\draw (156) -- (82) [dashed, very thin, color=teal];
\draw (160) -- (185) [dashed, very thin, color=teal];
\draw (162) -- (97) [dashed, very thin, color=teal];
\draw (163) -- (31) [dashed, very thin, color=teal];
\draw (164) -- (11) [dashed, very thin, color=teal];
\draw (166) -- (138) [dashed, very thin, color=teal];
\draw (169) -- (131) [dashed, very thin, color=teal];
\draw (177) -- (180) [dashed, very thin, color=teal];
\draw (183) -- (1) [dashed, very thin, color=teal];
\draw (189) -- (97) [dashed, very thin, color=teal];
\end{tikzpicture}
\begin{center}An optimal tree with $L=518.98$.\end{center}
\end{adjustbox}
\caption{Comparison between optimal solutions of two tree instances generated from base instance p5, with $c_{v}=2$ and $r_{v}=16.46$ for all vertices $v\in V$.}
\label{final_fig_12}
\end{figure}

Furthermore, the coverage capacities $c_{v}$ and the covering radia $r_{v}$ determine which and how many vertices can be covered, thus having a large impact on the optimal solutions. For the tour subgraph case, when the coverage capacities $c_v$ and the radia $r_v$ are sufficiently low, the problem reduces to the OP. On the other hand, when the $c_v$ and the $r_v$ are large enough, the resulting optimal solutions approach those of the TCMCSP. An analogous behaviour can be observed with the tree subgraph case, with respect to the tree counterparts of the OP and the TCMCSP. Figure \ref{final_fig_21} shows how the low-budget solutions (on the left) of Figures \ref{final_fig_11} and \ref{final_fig_12} change when the coverage capacity $c_{v}$ is significantly raised. Analogously, Figure \ref{final_fig_22} shows how the same solutions of Figures \ref{final_fig_11} and \ref{final_fig_12} change when instead the covering radius $r_{v}$ is increased.

\begin{figure}[t]
\centering
\begin{adjustbox}{valign=t,minipage={.45\textwidth}}
\centering
\begin{tikzpicture}[scale=0.09]
\tikzstyle{every node}=[draw,circle,fill=white,minimum size=4pt,
                            inner sep=0pt]
\draw (35, 35) node (0) [overlay, color=black] {};
\draw (22, 22) node (1) [overlay] {};
\draw (36, 26) node (2) [overlay] {};
\draw (21, 45) node (3) [overlay] {};
\draw (45, 35) node (4) [overlay] {};
\draw (55, 20) node (5) [overlay] {};
\draw (33, 34) node (6) [overlay] {};
\draw (50, 50) node (7) [overlay] {};
\draw (55, 45) node (8) [overlay] {};
\draw (26, 59) node (9) [overlay] {};
\draw (40, 66) node (10) [overlay] {};
\draw (55, 65) node (11) [overlay] {};
\draw (35, 51) node (12) [overlay] {};
\draw (62, 35) node (13) [overlay] {};
\draw (62, 57) node (14) [overlay] {};
\draw (62, 24) node (15) [overlay] {};
\draw (21, 36) node (16) [overlay] {};
\draw (33, 44) node (17) [overlay] {};
\draw (9, 56) node (18) [overlay] {};
\draw (62, 48) node (19) [overlay] {};
\draw (66, 14) node (20) [overlay] {};
\draw (44, 13) node (21) [overlay] {};
\draw (26, 13) node (22) [overlay] {};
\draw (11, 28) node (23) [overlay] {};
\draw (7, 43) node (24) [overlay] {};
\draw (17, 64) node (25) [overlay] {};
\draw (41, 46) node (26) [overlay] {};
\draw (55, 34) node (27) [overlay] {};
\draw (35, 16) node (28) [overlay] {};
\draw (52, 26) node (29) [overlay] {};
\draw (43, 26) node (30) [overlay] {};
\draw (31, 76) node (31) [overlay] {};
\draw (22, 53) node (32) [overlay] {};
\draw (26, 29) node (33) [overlay] {};
\draw (50, 40) node (34) [overlay] {};
\draw (55, 50) node (35) [overlay] {};
\draw (54, 10) node (36) [overlay] {};
\draw (60, 15) node (37) [overlay] {};
\draw (47, 66) node (38) [overlay] {};
\draw (30, 60) node (39) [overlay] {};
\draw (30, 50) node (40) [overlay] {};
\draw (12, 17) node (41) [overlay] {};
\draw (15, 14) node (42) [overlay] {};
\draw (16, 19) node (43) [overlay] {};
\draw (21, 48) node (44) [overlay] {};
\draw (50, 30) node (45) [overlay] {};
\draw (51, 42) node (46) [overlay] {};
\draw (50, 15) node (47) [overlay] {};
\draw (48, 21) node (48) [overlay] {};
\draw (12, 38) node (49) [overlay] {};
\draw (37, 52) node (50) [overlay] {};
\draw (49, 49) node (51) [overlay] {};
\draw (52, 64) node (52) [overlay] {};
\draw (20, 26) node (53) [overlay] {};
\draw (40, 30) node (54) [overlay] {};
\draw (21, 47) node (55) [overlay] {};
\draw (17, 63) node (56) [overlay] {};
\draw (31, 62) node (57) [overlay] {};
\draw (52, 33) node (58) [overlay] {};
\draw (51, 21) node (59) [overlay] {};
\draw (42, 41) node (60) [overlay] {};
\draw (31, 32) node (61) [overlay] {};
\draw (5, 25) node (62) [overlay] {};
\draw (12, 42) node (63) [overlay] {};
\draw (36, 16) node (64) [overlay] {};
\draw (52, 41) node (65) [overlay] {};
\draw (27, 23) node (66) [overlay] {};
\draw (17, 33) node (67) [overlay] {};
\draw (13, 13) node (68) [overlay] {};
\draw (57, 58) node (69) [overlay] {};
\draw (62, 42) node (70) [overlay] {};
\draw (42, 57) node (71) [overlay] {};
\draw (16, 57) node (72) [overlay] {};
\draw (8, 52) node (73) [overlay] {};
\draw (7, 38) node (74) [overlay] {};
\draw (27, 68) node (75) [overlay] {};
\draw (30, 48) node (76) [overlay] {};
\draw (43, 67) node (77) [overlay] {};
\draw (58, 48) node (78) [overlay] {};
\draw (58, 27) node (79) [overlay] {};
\draw (37, 69) node (80) [overlay] {};
\draw (38, 46) node (81) [overlay] {};
\draw (46, 10) node (82) [overlay] {};
\draw (61, 33) node (83) [overlay] {};
\draw (62, 63) node (84) [overlay] {};
\draw (63, 69) node (85) [overlay] {};
\draw (32, 22) node (86) [overlay] {};
\draw (45, 35) node (87) [overlay] {};
\draw (59, 15) node (88) [overlay] {};
\draw (5, 6) node (89) [overlay] {};
\draw (10, 17) node (90) [overlay] {};
\draw (21, 10) node (91) [overlay] {};
\draw (5, 64) node (92) [overlay] {};
\draw (30, 15) node (93) [overlay] {};
\draw (39, 10) node (94) [overlay] {};
\draw (32, 39) node (95) [overlay] {};
\draw (25, 32) node (96) [overlay] {};
\draw (25, 55) node (97) [overlay] {};
\draw (48, 28) node (98) [overlay] {};
\draw (56, 37) node (99) [overlay] {};
\draw (41, 49) node (100) [overlay] {};
\draw (35, 17) node (101) [overlay] {};
\draw (55, 45) node (102) [overlay] {};
\draw (55, 20) node (103) [overlay] {};
\draw (15, 30) node (104) [overlay] {};
\draw (25, 30) node (105) [overlay] {};
\draw (20, 50) node (106) [overlay] {};
\draw (10, 43) node (107) [overlay] {};
\draw (55, 60) node (108) [overlay] {};
\draw (30, 60) node (109) [overlay] {};
\draw (20, 65) node (110) [overlay] {};
\draw (50, 35) node (111) [overlay] {};
\draw (30, 25) node (112) [overlay] {};
\draw (15, 10) node (113) [overlay] {};
\draw (30, 5) node (114) [overlay] {};
\draw (10, 20) node (115) [overlay] {};
\draw (5, 30) node (116) [overlay] {};
\draw (20, 40) node (117) [overlay] {};
\draw (15, 60) node (118) [overlay] {};
\draw (45, 65) node (119) [overlay] {};
\draw (45, 20) node (120) [overlay] {};
\draw (45, 10) node (121) [overlay] {};
\draw (55, 5) node (122) [overlay] {};
\draw (65, 35) node (123) [overlay] {};
\draw (65, 20) node (124) [overlay] {};
\draw (45, 30) node (125) [overlay] {};
\draw (35, 40) node (126) [overlay] {};
\draw (41, 37) node (127) [overlay] {};
\draw (64, 42) node (128) [overlay] {};
\draw (40, 60) node (129) [overlay] {};
\draw (31, 52) node (130) [overlay] {};
\draw (35, 69) node (131) [overlay] {};
\draw (53, 52) node (132) [overlay] {};
\draw (65, 55) node (133) [overlay] {};
\draw (63, 65) node (134) [overlay] {};
\draw (2, 60) node (135) [overlay] {};
\draw (20, 20) node (136) [overlay] {};
\draw (5, 5) node (137) [overlay] {};
\draw (60, 12) node (138) [overlay] {};
\draw (40, 25) node (139) [overlay] {};
\draw (42, 7) node (140) [overlay] {};
\draw (24, 12) node (141) [overlay] {};
\draw (23, 3) node (142) [overlay] {};
\draw (11, 14) node (143) [overlay] {};
\draw (6, 38) node (144) [overlay] {};
\draw (2, 48) node (145) [overlay] {};
\draw (8, 56) node (146) [overlay] {};
\draw (13, 52) node (147) [overlay] {};
\draw (6, 68) node (148) [overlay] {};
\draw (47, 47) node (149) [overlay] {};
\draw (49, 58) node (150) [overlay] {};
\draw (27, 43) node (151) [overlay] {};
\draw (37, 31) node (152) [overlay] {};
\draw (57, 29) node (153) [overlay] {};
\draw (63, 23) node (154) [overlay] {};
\draw (53, 12) node (155) [overlay] {};
\draw (32, 12) node (156) [overlay] {};
\draw (36, 26) node (157) [overlay] {};
\draw (21, 24) node (158) [overlay] {};
\draw (17, 34) node (159) [overlay] {};
\draw (12, 24) node (160) [overlay] {};
\draw (24, 58) node (161) [overlay] {};
\draw (27, 69) node (162) [overlay] {};
\draw (15, 77) node (163) [overlay] {};
\draw (62, 77) node (164) [overlay] {};
\draw (49, 73) node (165) [overlay] {};
\draw (67, 5) node (166) [overlay] {};
\draw (56, 39) node (167) [overlay] {};
\draw (37, 47) node (168) [overlay] {};
\draw (37, 56) node (169) [overlay] {};
\draw (57, 68) node (170) [overlay] {};
\draw (47, 16) node (171) [overlay] {};
\draw (44, 17) node (172) [overlay] {};
\draw (46, 13) node (173) [overlay] {};
\draw (49, 11) node (174) [overlay] {};
\draw (49, 42) node (175) [overlay] {};
\draw (53, 43) node (176) [overlay] {};
\draw (61, 52) node (177) [overlay] {};
\draw (57, 48) node (178) [overlay] {};
\draw (56, 37) node (179) [overlay] {};
\draw (55, 54) node (180) [overlay] {};
\draw (15, 47) node (181) [overlay] {};
\draw (14, 37) node (182) [overlay] {};
\draw (11, 31) node (183) [overlay] {};
\draw (16, 22) node (184) [overlay] {};
\draw (4, 18) node (185) [overlay] {};
\draw (28, 18) node (186) [overlay] {};
\draw (26, 52) node (187) [overlay] {};
\draw (26, 35) node (188) [overlay] {};
\draw (31, 67) node (189) [overlay] {};
\draw (15, 19) node (190) [overlay] {};
\draw (22, 22) node (191) [overlay] {};
\draw (18, 24) node (192) [overlay] {};
\draw (26, 27) node (193) [overlay] {};
\draw (25, 24) node (194) [overlay] {};
\draw (22, 27) node (195) [overlay] {};
\draw (25, 21) node (196) [overlay] {};
\draw (19, 21) node (197) [overlay] {};
\draw (20, 26) node (198) [overlay] {};
\draw (18, 18) node (199) [overlay] {};
\draw (0) -- (6) [very thick, color=red];
\draw (0) -- (152) [very thick, color=red];
\draw (1) -- (191) [very thick, color=red];
\draw (1) -- (194) [very thick, color=red];
\draw (2) -- (139) [very thick, color=red];
\draw (2) -- (157) [very thick, color=red];
\draw (3) -- (55) [very thick, color=red];
\draw (3) -- (117) [very thick, color=red];
\draw (5) -- (59) [very thick, color=red];
\draw (5) -- (103) [very thick, color=red];
\draw (6) -- (61) [very thick, color=red];
\draw (8) -- (102) [very thick, color=red];
\draw (8) -- (178) [very thick, color=red];
\draw (9) -- (109) [very thick, color=red];
\draw (9) -- (161) [very thick, color=red];
\draw (10) -- (57) [very thick, color=red];
\draw (10) -- (77) [very thick, color=red];
\draw (11) -- (52) [very thick, color=red];
\draw (11) -- (108) [very thick, color=red];
\draw (16) -- (117) [very thick, color=red];
\draw (16) -- (159) [very thick, color=red];
\draw (27) -- (153) [very thick, color=red];
\draw (27) -- (179) [very thick, color=red];
\draw (30) -- (48) [very thick, color=red];
\draw (30) -- (139) [very thick, color=red];
\draw (32) -- (44) [very thick, color=red];
\draw (32) -- (97) [very thick, color=red];
\draw (33) -- (105) [very thick, color=red];
\draw (33) -- (193) [very thick, color=red];
\draw (35) -- (178) [very thick, color=red];
\draw (35) -- (180) [very thick, color=red];
\draw (38) -- (52) [very thick, color=red];
\draw (38) -- (77) [very thick, color=red];
\draw (39) -- (57) [very thick, color=red];
\draw (39) -- (109) [very thick, color=red];
\draw (44) -- (55) [very thick, color=red];
\draw (46) -- (65) [very thick, color=red];
\draw (46) -- (176) [very thick, color=red];
\draw (48) -- (59) [very thick, color=red];
\draw (53) -- (158) [very thick, color=red];
\draw (53) -- (198) [very thick, color=red];
\draw (61) -- (96) [very thick, color=red];
\draw (65) -- (167) [very thick, color=red];
\draw (67) -- (104) [very thick, color=red];
\draw (67) -- (159) [very thick, color=red];
\draw (69) -- (108) [very thick, color=red];
\draw (69) -- (180) [very thick, color=red];
\draw (79) -- (103) [very thick, color=red];
\draw (79) -- (153) [very thick, color=red];
\draw (96) -- (105) [very thick, color=red];
\draw (97) -- (161) [very thick, color=red];
\draw (99) -- (167) [very thick, color=red];
\draw (99) -- (179) [very thick, color=red];
\draw (102) -- (176) [very thick, color=red];
\draw (104) -- (184) [very thick, color=red];
\draw (152) -- (157) [very thick, color=red];
\draw (158) -- (191) [very thick, color=red];
\draw (184) -- (192) [very thick, color=red];
\draw (192) -- (198) [very thick, color=red];
\draw (193) -- (194) [very thick, color=red];
\draw (4) -- (79) [dashed, very thin, color=teal];
\draw (7) -- (11) [dashed, very thin, color=teal];
\draw (12) -- (9) [dashed, very thin, color=teal];
\draw (13) -- (79) [dashed, very thin, color=teal];
\draw (14) -- (11) [dashed, very thin, color=teal];
\draw (15) -- (103) [dashed, very thin, color=teal];
\draw (17) -- (3) [dashed, very thin, color=teal];
\draw (18) -- (3) [dashed, very thin, color=teal];
\draw (19) -- (27) [dashed, very thin, color=teal];
\draw (20) -- (103) [dashed, very thin, color=teal];
\draw (21) -- (103) [dashed, very thin, color=teal];
\draw (22) -- (1) [dashed, very thin, color=teal];
\draw (23) -- (159) [dashed, very thin, color=teal];
\draw (24) -- (3) [dashed, very thin, color=teal];
\draw (25) -- (9) [dashed, very thin, color=teal];
\draw (26) -- (35) [dashed, very thin, color=teal];
\draw (28) -- (194) [dashed, very thin, color=teal];
\draw (29) -- (179) [dashed, very thin, color=teal];
\draw (31) -- (10) [dashed, very thin, color=teal];
\draw (34) -- (79) [dashed, very thin, color=teal];
\draw (36) -- (5) [dashed, very thin, color=teal];
\draw (37) -- (79) [dashed, very thin, color=teal];
\draw (40) -- (6) [dashed, very thin, color=teal];
\draw (41) -- (1) [dashed, very thin, color=teal];
\draw (42) -- (1) [dashed, very thin, color=teal];
\draw (43) -- (33) [dashed, very thin, color=teal];
\draw (45) -- (103) [dashed, very thin, color=teal];
\draw (47) -- (103) [dashed, very thin, color=teal];
\draw (49) -- (16) [dashed, very thin, color=teal];
\draw (50) -- (10) [dashed, very thin, color=teal];
\draw (51) -- (27) [dashed, very thin, color=teal];
\draw (54) -- (194) [dashed, very thin, color=teal];
\draw (56) -- (9) [dashed, very thin, color=teal];
\draw (58) -- (5) [dashed, very thin, color=teal];
\draw (60) -- (6) [dashed, very thin, color=teal];
\draw (62) -- (198) [dashed, very thin, color=teal];
\draw (63) -- (3) [dashed, very thin, color=teal];
\draw (64) -- (194) [dashed, very thin, color=teal];
\draw (66) -- (198) [dashed, very thin, color=teal];
\draw (68) -- (1) [dashed, very thin, color=teal];
\draw (70) -- (8) [dashed, very thin, color=teal];
\draw (71) -- (10) [dashed, very thin, color=teal];
\draw (72) -- (161) [dashed, very thin, color=teal];
\draw (73) -- (3) [dashed, very thin, color=teal];
\draw (74) -- (3) [dashed, very thin, color=teal];
\draw (75) -- (10) [dashed, very thin, color=teal];
\draw (76) -- (6) [dashed, very thin, color=teal];
\draw (78) -- (8) [dashed, very thin, color=teal];
\draw (80) -- (9) [dashed, very thin, color=teal];
\draw (81) -- (6) [dashed, very thin, color=teal];
\draw (82) -- (5) [dashed, very thin, color=teal];
\draw (83) -- (5) [dashed, very thin, color=teal];
\draw (84) -- (11) [dashed, very thin, color=teal];
\draw (85) -- (11) [dashed, very thin, color=teal];
\draw (86) -- (184) [dashed, very thin, color=teal];
\draw (87) -- (2) [dashed, very thin, color=teal];
\draw (88) -- (103) [dashed, very thin, color=teal];
\draw (90) -- (1) [dashed, very thin, color=teal];
\draw (91) -- (1) [dashed, very thin, color=teal];
\draw (93) -- (1) [dashed, very thin, color=teal];
\draw (94) -- (48) [dashed, very thin, color=teal];
\draw (95) -- (2) [dashed, very thin, color=teal];
\draw (98) -- (0) [dashed, very thin, color=teal];
\draw (100) -- (8) [dashed, very thin, color=teal];
\draw (101) -- (2) [dashed, very thin, color=teal];
\draw (106) -- (117) [dashed, very thin, color=teal];
\draw (107) -- (3) [dashed, very thin, color=teal];
\draw (110) -- (9) [dashed, very thin, color=teal];
\draw (111) -- (5) [dashed, very thin, color=teal];
\draw (112) -- (0) [dashed, very thin, color=teal];
\draw (113) -- (1) [dashed, very thin, color=teal];
\draw (115) -- (53) [dashed, very thin, color=teal];
\draw (116) -- (53) [dashed, very thin, color=teal];
\draw (118) -- (3) [dashed, very thin, color=teal];
\draw (119) -- (180) [dashed, very thin, color=teal];
\draw (120) -- (153) [dashed, very thin, color=teal];
\draw (121) -- (5) [dashed, very thin, color=teal];
\draw (122) -- (103) [dashed, very thin, color=teal];
\draw (123) -- (8) [dashed, very thin, color=teal];
\draw (124) -- (5) [dashed, very thin, color=teal];
\draw (125) -- (0) [dashed, very thin, color=teal];
\draw (126) -- (0) [dashed, very thin, color=teal];
\draw (127) -- (0) [dashed, very thin, color=teal];
\draw (128) -- (8) [dashed, very thin, color=teal];
\draw (129) -- (9) [dashed, very thin, color=teal];
\draw (130) -- (161) [dashed, very thin, color=teal];
\draw (131) -- (9) [dashed, very thin, color=teal];
\draw (132) -- (8) [dashed, very thin, color=teal];
\draw (133) -- (8) [dashed, very thin, color=teal];
\draw (134) -- (11) [dashed, very thin, color=teal];
\draw (136) -- (16) [dashed, very thin, color=teal];
\draw (138) -- (79) [dashed, very thin, color=teal];
\draw (140) -- (48) [dashed, very thin, color=teal];
\draw (141) -- (1) [dashed, very thin, color=teal];
\draw (143) -- (1) [dashed, very thin, color=teal];
\draw (144) -- (16) [dashed, very thin, color=teal];
\draw (146) -- (161) [dashed, very thin, color=teal];
\draw (147) -- (3) [dashed, very thin, color=teal];
\draw (149) -- (8) [dashed, very thin, color=teal];
\draw (150) -- (8) [dashed, very thin, color=teal];
\draw (151) -- (0) [dashed, very thin, color=teal];
\draw (154) -- (5) [dashed, very thin, color=teal];
\draw (155) -- (5) [dashed, very thin, color=teal];
\draw (156) -- (139) [dashed, very thin, color=teal];
\draw (160) -- (16) [dashed, very thin, color=teal];
\draw (162) -- (9) [dashed, very thin, color=teal];
\draw (164) -- (11) [dashed, very thin, color=teal];
\draw (165) -- (10) [dashed, very thin, color=teal];
\draw (168) -- (0) [dashed, very thin, color=teal];
\draw (169) -- (9) [dashed, very thin, color=teal];
\draw (170) -- (11) [dashed, very thin, color=teal];
\draw (171) -- (157) [dashed, very thin, color=teal];
\draw (172) -- (157) [dashed, very thin, color=teal];
\draw (173) -- (103) [dashed, very thin, color=teal];
\draw (174) -- (5) [dashed, very thin, color=teal];
\draw (175) -- (0) [dashed, very thin, color=teal];
\draw (177) -- (8) [dashed, very thin, color=teal];
\draw (181) -- (3) [dashed, very thin, color=teal];
\draw (182) -- (16) [dashed, very thin, color=teal];
\draw (183) -- (16) [dashed, very thin, color=teal];
\draw (185) -- (104) [dashed, very thin, color=teal];
\draw (186) -- (198) [dashed, very thin, color=teal];
\draw (187) -- (117) [dashed, very thin, color=teal];
\draw (188) -- (0) [dashed, very thin, color=teal];
\draw (189) -- (9) [dashed, very thin, color=teal];
\draw (190) -- (192) [dashed, very thin, color=teal];
\draw (195) -- (0) [dashed, very thin, color=teal];
\draw (196) -- (193) [dashed, very thin, color=teal];
\draw (197) -- (33) [dashed, very thin, color=teal];
\draw (199) -- (33) [dashed, very thin, color=teal];

\end{tikzpicture}
\begin{center}An optimal tour with $L=194.11$.\end{center}
\end{adjustbox}\hfill
\begin{adjustbox}{valign=t,minipage={.45\textwidth}}
\centering
\begin{tikzpicture}[scale=0.09]
\tikzstyle{every node}=[draw,circle,fill=white,minimum size=4pt,
                            inner sep=0pt]
\draw (35, 35) node (0) [overlay, color=black] {};
\draw (22, 22) node (1) [overlay] {};
\draw (36, 26) node (2) [overlay] {};
\draw (21, 45) node (3) [overlay] {};
\draw (45, 35) node (4) [overlay] {};
\draw (55, 20) node (5) [overlay] {};
\draw (33, 34) node (6) [overlay] {};
\draw (50, 50) node (7) [overlay] {};
\draw (55, 45) node (8) [overlay] {};
\draw (26, 59) node (9) [overlay] {};
\draw (40, 66) node (10) [overlay] {};
\draw (55, 65) node (11) [overlay] {};
\draw (35, 51) node (12) [overlay] {};
\draw (62, 35) node (13) [overlay] {};
\draw (62, 57) node (14) [overlay] {};
\draw (62, 24) node (15) [overlay] {};
\draw (21, 36) node (16) [overlay] {};
\draw (33, 44) node (17) [overlay] {};
\draw (9, 56) node (18) [overlay] {};
\draw (62, 48) node (19) [overlay] {};
\draw (66, 14) node (20) [overlay] {};
\draw (44, 13) node (21) [overlay] {};
\draw (26, 13) node (22) [overlay] {};
\draw (11, 28) node (23) [overlay] {};
\draw (7, 43) node (24) [overlay] {};
\draw (17, 64) node (25) [overlay] {};
\draw (41, 46) node (26) [overlay] {};
\draw (55, 34) node (27) [overlay] {};
\draw (35, 16) node (28) [overlay] {};
\draw (52, 26) node (29) [overlay] {};
\draw (43, 26) node (30) [overlay] {};
\draw (31, 76) node (31) [overlay] {};
\draw (22, 53) node (32) [overlay] {};
\draw (26, 29) node (33) [overlay] {};
\draw (50, 40) node (34) [overlay] {};
\draw (55, 50) node (35) [overlay] {};
\draw (54, 10) node (36) [overlay] {};
\draw (60, 15) node (37) [overlay] {};
\draw (47, 66) node (38) [overlay] {};
\draw (30, 60) node (39) [overlay] {};
\draw (30, 50) node (40) [overlay] {};
\draw (12, 17) node (41) [overlay] {};
\draw (15, 14) node (42) [overlay] {};
\draw (16, 19) node (43) [overlay] {};
\draw (21, 48) node (44) [overlay] {};
\draw (50, 30) node (45) [overlay] {};
\draw (51, 42) node (46) [overlay] {};
\draw (50, 15) node (47) [overlay] {};
\draw (48, 21) node (48) [overlay] {};
\draw (12, 38) node (49) [overlay] {};
\draw (37, 52) node (50) [overlay] {};
\draw (49, 49) node (51) [overlay] {};
\draw (52, 64) node (52) [overlay] {};
\draw (20, 26) node (53) [overlay] {};
\draw (40, 30) node (54) [overlay] {};
\draw (21, 47) node (55) [overlay] {};
\draw (17, 63) node (56) [overlay] {};
\draw (31, 62) node (57) [overlay] {};
\draw (52, 33) node (58) [overlay] {};
\draw (51, 21) node (59) [overlay] {};
\draw (42, 41) node (60) [overlay] {};
\draw (31, 32) node (61) [overlay] {};
\draw (5, 25) node (62) [overlay] {};
\draw (12, 42) node (63) [overlay] {};
\draw (36, 16) node (64) [overlay] {};
\draw (52, 41) node (65) [overlay] {};
\draw (27, 23) node (66) [overlay] {};
\draw (17, 33) node (67) [overlay] {};
\draw (13, 13) node (68) [overlay] {};
\draw (57, 58) node (69) [overlay] {};
\draw (62, 42) node (70) [overlay] {};
\draw (42, 57) node (71) [overlay] {};
\draw (16, 57) node (72) [overlay] {};
\draw (8, 52) node (73) [overlay] {};
\draw (7, 38) node (74) [overlay] {};
\draw (27, 68) node (75) [overlay] {};
\draw (30, 48) node (76) [overlay] {};
\draw (43, 67) node (77) [overlay] {};
\draw (58, 48) node (78) [overlay] {};
\draw (58, 27) node (79) [overlay] {};
\draw (37, 69) node (80) [overlay] {};
\draw (38, 46) node (81) [overlay] {};
\draw (46, 10) node (82) [overlay] {};
\draw (61, 33) node (83) [overlay] {};
\draw (62, 63) node (84) [overlay] {};
\draw (63, 69) node (85) [overlay] {};
\draw (32, 22) node (86) [overlay] {};
\draw (45, 35) node (87) [overlay] {};
\draw (59, 15) node (88) [overlay] {};
\draw (5, 6) node (89) [overlay] {};
\draw (10, 17) node (90) [overlay] {};
\draw (21, 10) node (91) [overlay] {};
\draw (5, 64) node (92) [overlay] {};
\draw (30, 15) node (93) [overlay] {};
\draw (39, 10) node (94) [overlay] {};
\draw (32, 39) node (95) [overlay] {};
\draw (25, 32) node (96) [overlay] {};
\draw (25, 55) node (97) [overlay] {};
\draw (48, 28) node (98) [overlay] {};
\draw (56, 37) node (99) [overlay] {};
\draw (41, 49) node (100) [overlay] {};
\draw (35, 17) node (101) [overlay] {};
\draw (55, 45) node (102) [overlay] {};
\draw (55, 20) node (103) [overlay] {};
\draw (15, 30) node (104) [overlay] {};
\draw (25, 30) node (105) [overlay] {};
\draw (20, 50) node (106) [overlay] {};
\draw (10, 43) node (107) [overlay] {};
\draw (55, 60) node (108) [overlay] {};
\draw (30, 60) node (109) [overlay] {};
\draw (20, 65) node (110) [overlay] {};
\draw (50, 35) node (111) [overlay] {};
\draw (30, 25) node (112) [overlay] {};
\draw (15, 10) node (113) [overlay] {};
\draw (30, 5) node (114) [overlay] {};
\draw (10, 20) node (115) [overlay] {};
\draw (5, 30) node (116) [overlay] {};
\draw (20, 40) node (117) [overlay] {};
\draw (15, 60) node (118) [overlay] {};
\draw (45, 65) node (119) [overlay] {};
\draw (45, 20) node (120) [overlay] {};
\draw (45, 10) node (121) [overlay] {};
\draw (55, 5) node (122) [overlay] {};
\draw (65, 35) node (123) [overlay] {};
\draw (65, 20) node (124) [overlay] {};
\draw (45, 30) node (125) [overlay] {};
\draw (35, 40) node (126) [overlay] {};
\draw (41, 37) node (127) [overlay] {};
\draw (64, 42) node (128) [overlay] {};
\draw (40, 60) node (129) [overlay] {};
\draw (31, 52) node (130) [overlay] {};
\draw (35, 69) node (131) [overlay] {};
\draw (53, 52) node (132) [overlay] {};
\draw (65, 55) node (133) [overlay] {};
\draw (63, 65) node (134) [overlay] {};
\draw (2, 60) node (135) [overlay] {};
\draw (20, 20) node (136) [overlay] {};
\draw (5, 5) node (137) [overlay] {};
\draw (60, 12) node (138) [overlay] {};
\draw (40, 25) node (139) [overlay] {};
\draw (42, 7) node (140) [overlay] {};
\draw (24, 12) node (141) [overlay] {};
\draw (23, 3) node (142) [overlay] {};
\draw (11, 14) node (143) [overlay] {};
\draw (6, 38) node (144) [overlay] {};
\draw (2, 48) node (145) [overlay] {};
\draw (8, 56) node (146) [overlay] {};
\draw (13, 52) node (147) [overlay] {};
\draw (6, 68) node (148) [overlay] {};
\draw (47, 47) node (149) [overlay] {};
\draw (49, 58) node (150) [overlay] {};
\draw (27, 43) node (151) [overlay] {};
\draw (37, 31) node (152) [overlay] {};
\draw (57, 29) node (153) [overlay] {};
\draw (63, 23) node (154) [overlay] {};
\draw (53, 12) node (155) [overlay] {};
\draw (32, 12) node (156) [overlay] {};
\draw (36, 26) node (157) [overlay] {};
\draw (21, 24) node (158) [overlay] {};
\draw (17, 34) node (159) [overlay] {};
\draw (12, 24) node (160) [overlay] {};
\draw (24, 58) node (161) [overlay] {};
\draw (27, 69) node (162) [overlay] {};
\draw (15, 77) node (163) [overlay] {};
\draw (62, 77) node (164) [overlay] {};
\draw (49, 73) node (165) [overlay] {};
\draw (67, 5) node (166) [overlay] {};
\draw (56, 39) node (167) [overlay] {};
\draw (37, 47) node (168) [overlay] {};
\draw (37, 56) node (169) [overlay] {};
\draw (57, 68) node (170) [overlay] {};
\draw (47, 16) node (171) [overlay] {};
\draw (44, 17) node (172) [overlay] {};
\draw (46, 13) node (173) [overlay] {};
\draw (49, 11) node (174) [overlay] {};
\draw (49, 42) node (175) [overlay] {};
\draw (53, 43) node (176) [overlay] {};
\draw (61, 52) node (177) [overlay] {};
\draw (57, 48) node (178) [overlay] {};
\draw (56, 37) node (179) [overlay] {};
\draw (55, 54) node (180) [overlay] {};
\draw (15, 47) node (181) [overlay] {};
\draw (14, 37) node (182) [overlay] {};
\draw (11, 31) node (183) [overlay] {};
\draw (16, 22) node (184) [overlay] {};
\draw (4, 18) node (185) [overlay] {};
\draw (28, 18) node (186) [overlay] {};
\draw (26, 52) node (187) [overlay] {};
\draw (26, 35) node (188) [overlay] {};
\draw (31, 67) node (189) [overlay] {};
\draw (15, 19) node (190) [overlay] {};
\draw (22, 22) node (191) [overlay] {};
\draw (18, 24) node (192) [overlay] {};
\draw (26, 27) node (193) [overlay] {};
\draw (25, 24) node (194) [overlay] {};
\draw (22, 27) node (195) [overlay] {};
\draw (25, 21) node (196) [overlay] {};
\draw (19, 21) node (197) [overlay] {};
\draw (20, 26) node (198) [overlay] {};
\draw (18, 18) node (199) [overlay] {};
\draw (0) -- (6) [very thick, color=red];
\draw (0) -- (126) [very thick, color=red];
\draw (0) -- (152) [very thick, color=red];
\draw (1) -- (191) [very thick, color=red];
\draw (2) -- (157) [very thick, color=red];
\draw (4) -- (87) [very thick, color=red];
\draw (5) -- (103) [very thick, color=red];
\draw (6) -- (61) [very thick, color=red];
\draw (8) -- (102) [very thick, color=red];
\draw (9) -- (109) [very thick, color=red];
\draw (9) -- (161) [very thick, color=red];
\draw (11) -- (108) [very thick, color=red];
\draw (17) -- (76) [very thick, color=red];
\draw (17) -- (126) [very thick, color=red];
\draw (18) -- (146) [very thick, color=red];
\draw (18) -- (147) [very thick, color=red];
\draw (29) -- (98) [very thick, color=red];
\draw (29) -- (103) [very thick, color=red];
\draw (32) -- (97) [very thick, color=red];
\draw (32) -- (147) [very thick, color=red];
\draw (33) -- (61) [very thick, color=red];
\draw (33) -- (193) [very thick, color=red];
\draw (34) -- (65) [very thick, color=red];
\draw (34) -- (87) [very thick, color=red];
\draw (35) -- (178) [very thick, color=red];
\draw (35) -- (180) [very thick, color=red];
\draw (39) -- (109) [very thick, color=red];
\draw (40) -- (76) [very thick, color=red];
\draw (40) -- (130) [very thick, color=red];
\draw (40) -- (187) [very thick, color=red];
\draw (41) -- (43) [very thick, color=red];
\draw (41) -- (90) [very thick, color=red];
\draw (43) -- (184) [very thick, color=red];
\draw (43) -- (199) [very thick, color=red];
\draw (46) -- (65) [very thick, color=red];
\draw (54) -- (125) [very thick, color=red];
\draw (54) -- (152) [very thick, color=red];
\draw (57) -- (109) [very thick, color=red];
\draw (65) -- (176) [very thick, color=red];
\draw (69) -- (108) [very thick, color=red];
\draw (69) -- (180) [very thick, color=red];
\draw (87) -- (125) [very thick, color=red];
\draw (97) -- (161) [very thick, color=red];
\draw (97) -- (187) [very thick, color=red];
\draw (98) -- (125) [very thick, color=red];
\draw (102) -- (176) [very thick, color=red];
\draw (102) -- (178) [very thick, color=red];
\draw (152) -- (157) [very thick, color=red];
\draw (158) -- (191) [very thick, color=red];
\draw (158) -- (192) [very thick, color=red];
\draw (184) -- (192) [very thick, color=red];
\draw (191) -- (194) [very thick, color=red];
\draw (193) -- (194) [very thick, color=red];
\draw (3) -- (6) [dashed, very thin, color=teal];
\draw (7) -- (4) [dashed, very thin, color=teal];
\draw (10) -- (109) [dashed, very thin, color=teal];
\draw (12) -- (187) [dashed, very thin, color=teal];
\draw (13) -- (29) [dashed, very thin, color=teal];
\draw (14) -- (8) [dashed, very thin, color=teal];
\draw (15) -- (29) [dashed, very thin, color=teal];
\draw (16) -- (194) [dashed, very thin, color=teal];
\draw (19) -- (34) [dashed, very thin, color=teal];
\draw (20) -- (5) [dashed, very thin, color=teal];
\draw (21) -- (5) [dashed, very thin, color=teal];
\draw (22) -- (199) [dashed, very thin, color=teal];
\draw (23) -- (90) [dashed, very thin, color=teal];
\draw (24) -- (146) [dashed, very thin, color=teal];
\draw (25) -- (18) [dashed, very thin, color=teal];
\draw (26) -- (187) [dashed, very thin, color=teal];
\draw (27) -- (5) [dashed, very thin, color=teal];
\draw (28) -- (194) [dashed, very thin, color=teal];
\draw (30) -- (29) [dashed, very thin, color=teal];
\draw (31) -- (39) [dashed, very thin, color=teal];
\draw (36) -- (103) [dashed, very thin, color=teal];
\draw (37) -- (103) [dashed, very thin, color=teal];
\draw (38) -- (11) [dashed, very thin, color=teal];
\draw (42) -- (1) [dashed, very thin, color=teal];
\draw (44) -- (17) [dashed, very thin, color=teal];
\draw (45) -- (2) [dashed, very thin, color=teal];
\draw (47) -- (29) [dashed, very thin, color=teal];
\draw (48) -- (157) [dashed, very thin, color=teal];
\draw (49) -- (147) [dashed, very thin, color=teal];
\draw (50) -- (9) [dashed, very thin, color=teal];
\draw (51) -- (8) [dashed, very thin, color=teal];
\draw (52) -- (11) [dashed, very thin, color=teal];
\draw (53) -- (90) [dashed, very thin, color=teal];
\draw (55) -- (17) [dashed, very thin, color=teal];
\draw (56) -- (9) [dashed, very thin, color=teal];
\draw (58) -- (4) [dashed, very thin, color=teal];
\draw (59) -- (2) [dashed, very thin, color=teal];
\draw (60) -- (2) [dashed, very thin, color=teal];
\draw (62) -- (90) [dashed, very thin, color=teal];
\draw (63) -- (18) [dashed, very thin, color=teal];
\draw (64) -- (2) [dashed, very thin, color=teal];
\draw (66) -- (199) [dashed, very thin, color=teal];
\draw (67) -- (6) [dashed, very thin, color=teal];
\draw (68) -- (90) [dashed, very thin, color=teal];
\draw (70) -- (8) [dashed, very thin, color=teal];
\draw (71) -- (180) [dashed, very thin, color=teal];
\draw (72) -- (18) [dashed, very thin, color=teal];
\draw (73) -- (18) [dashed, very thin, color=teal];
\draw (74) -- (147) [dashed, very thin, color=teal];
\draw (75) -- (9) [dashed, very thin, color=teal];
\draw (77) -- (39) [dashed, very thin, color=teal];
\draw (78) -- (8) [dashed, very thin, color=teal];
\draw (79) -- (29) [dashed, very thin, color=teal];
\draw (80) -- (9) [dashed, very thin, color=teal];
\draw (81) -- (4) [dashed, very thin, color=teal];
\draw (82) -- (5) [dashed, very thin, color=teal];
\draw (83) -- (4) [dashed, very thin, color=teal];
\draw (84) -- (11) [dashed, very thin, color=teal];
\draw (85) -- (11) [dashed, very thin, color=teal];
\draw (86) -- (199) [dashed, very thin, color=teal];
\draw (88) -- (103) [dashed, very thin, color=teal];
\draw (89) -- (90) [dashed, very thin, color=teal];
\draw (91) -- (90) [dashed, very thin, color=teal];
\draw (92) -- (18) [dashed, very thin, color=teal];
\draw (93) -- (2) [dashed, very thin, color=teal];
\draw (94) -- (2) [dashed, very thin, color=teal];
\draw (95) -- (2) [dashed, very thin, color=teal];
\draw (96) -- (199) [dashed, very thin, color=teal];
\draw (99) -- (4) [dashed, very thin, color=teal];
\draw (100) -- (4) [dashed, very thin, color=teal];
\draw (101) -- (194) [dashed, very thin, color=teal];
\draw (104) -- (90) [dashed, very thin, color=teal];
\draw (105) -- (0) [dashed, very thin, color=teal];
\draw (106) -- (9) [dashed, very thin, color=teal];
\draw (107) -- (18) [dashed, very thin, color=teal];
\draw (110) -- (18) [dashed, very thin, color=teal];
\draw (111) -- (29) [dashed, very thin, color=teal];
\draw (112) -- (0) [dashed, very thin, color=teal];
\draw (113) -- (1) [dashed, very thin, color=teal];
\draw (115) -- (1) [dashed, very thin, color=teal];
\draw (116) -- (41) [dashed, very thin, color=teal];
\draw (117) -- (0) [dashed, very thin, color=teal];
\draw (118) -- (9) [dashed, very thin, color=teal];
\draw (119) -- (11) [dashed, very thin, color=teal];
\draw (120) -- (29) [dashed, very thin, color=teal];
\draw (121) -- (5) [dashed, very thin, color=teal];
\draw (122) -- (103) [dashed, very thin, color=teal];
\draw (123) -- (8) [dashed, very thin, color=teal];
\draw (124) -- (103) [dashed, very thin, color=teal];
\draw (127) -- (0) [dashed, very thin, color=teal];
\draw (128) -- (8) [dashed, very thin, color=teal];
\draw (129) -- (11) [dashed, very thin, color=teal];
\draw (131) -- (9) [dashed, very thin, color=teal];
\draw (132) -- (8) [dashed, very thin, color=teal];
\draw (133) -- (8) [dashed, very thin, color=teal];
\draw (134) -- (11) [dashed, very thin, color=teal];
\draw (135) -- (18) [dashed, very thin, color=teal];
\draw (136) -- (1) [dashed, very thin, color=teal];
\draw (137) -- (90) [dashed, very thin, color=teal];
\draw (138) -- (5) [dashed, very thin, color=teal];
\draw (139) -- (0) [dashed, very thin, color=teal];
\draw (141) -- (1) [dashed, very thin, color=teal];
\draw (142) -- (199) [dashed, very thin, color=teal];
\draw (143) -- (41) [dashed, very thin, color=teal];
\draw (144) -- (147) [dashed, very thin, color=teal];
\draw (145) -- (18) [dashed, very thin, color=teal];
\draw (148) -- (18) [dashed, very thin, color=teal];
\draw (149) -- (4) [dashed, very thin, color=teal];
\draw (150) -- (8) [dashed, very thin, color=teal];
\draw (151) -- (0) [dashed, very thin, color=teal];
\draw (153) -- (4) [dashed, very thin, color=teal];
\draw (154) -- (5) [dashed, very thin, color=teal];
\draw (155) -- (103) [dashed, very thin, color=teal];
\draw (156) -- (199) [dashed, very thin, color=teal];
\draw (159) -- (6) [dashed, very thin, color=teal];
\draw (160) -- (33) [dashed, very thin, color=teal];
\draw (162) -- (161) [dashed, very thin, color=teal];
\draw (164) -- (11) [dashed, very thin, color=teal];
\draw (165) -- (11) [dashed, very thin, color=teal];
\draw (167) -- (4) [dashed, very thin, color=teal];
\draw (168) -- (0) [dashed, very thin, color=teal];
\draw (169) -- (9) [dashed, very thin, color=teal];
\draw (170) -- (11) [dashed, very thin, color=teal];
\draw (171) -- (2) [dashed, very thin, color=teal];
\draw (172) -- (2) [dashed, very thin, color=teal];
\draw (173) -- (2) [dashed, very thin, color=teal];
\draw (174) -- (103) [dashed, very thin, color=teal];
\draw (175) -- (0) [dashed, very thin, color=teal];
\draw (177) -- (8) [dashed, very thin, color=teal];
\draw (179) -- (4) [dashed, very thin, color=teal];
\draw (181) -- (9) [dashed, very thin, color=teal];
\draw (182) -- (33) [dashed, very thin, color=teal];
\draw (183) -- (1) [dashed, very thin, color=teal];
\draw (185) -- (90) [dashed, very thin, color=teal];
\draw (186) -- (1) [dashed, very thin, color=teal];
\draw (188) -- (0) [dashed, very thin, color=teal];
\draw (189) -- (9) [dashed, very thin, color=teal];
\draw (190) -- (194) [dashed, very thin, color=teal];
\draw (195) -- (0) [dashed, very thin, color=teal];
\draw (196) -- (1) [dashed, very thin, color=teal];
\draw (197) -- (1) [dashed, very thin, color=teal];
\draw (198) -- (1) [dashed, very thin, color=teal];

\end{tikzpicture}
\begin{center}An optimal tree with $L=173.00$.\end{center}
\end{adjustbox}
\caption{Optimal solutions (tour subgraph on the left and its tree counterpart on the right) of a further instance generated from p5, with $r_{v}=16.46$ and $c_{v}=10$ for all $v\in V$.}
\label{final_fig_21}
\end{figure}
\begin{figure}[t]
\centering
\begin{adjustbox}{valign=t,minipage={.45\textwidth}}
\centering
\begin{tikzpicture}[scale=0.09]
\tikzstyle{every node}=[draw,circle,fill=white,minimum size=4pt,
                            inner sep=0pt]
\draw (35, 35) node (0) [overlay, color=black] {};
\draw (22, 22) node (1) [overlay] {};
\draw (36, 26) node (2) [overlay] {};
\draw (21, 45) node (3) [overlay] {};
\draw (45, 35) node (4) [overlay] {};
\draw (55, 20) node (5) [overlay] {};
\draw (33, 34) node (6) [overlay] {};
\draw (50, 50) node (7) [overlay] {};
\draw (55, 45) node (8) [overlay] {};
\draw (26, 59) node (9) [overlay] {};
\draw (40, 66) node (10) [overlay] {};
\draw (55, 65) node (11) [overlay] {};
\draw (35, 51) node (12) [overlay] {};
\draw (62, 35) node (13) [overlay] {};
\draw (62, 57) node (14) [overlay] {};
\draw (62, 24) node (15) [overlay] {};
\draw (21, 36) node (16) [overlay] {};
\draw (33, 44) node (17) [overlay] {};
\draw (9, 56) node (18) [overlay] {};
\draw (62, 48) node (19) [overlay] {};
\draw (66, 14) node (20) [overlay] {};
\draw (44, 13) node (21) [overlay] {};
\draw (26, 13) node (22) [overlay] {};
\draw (11, 28) node (23) [overlay] {};
\draw (7, 43) node (24) [overlay] {};
\draw (17, 64) node (25) [overlay] {};
\draw (41, 46) node (26) [overlay] {};
\draw (55, 34) node (27) [overlay] {};
\draw (35, 16) node (28) [overlay] {};
\draw (52, 26) node (29) [overlay] {};
\draw (43, 26) node (30) [overlay] {};
\draw (31, 76) node (31) [overlay] {};
\draw (22, 53) node (32) [overlay] {};
\draw (26, 29) node (33) [overlay] {};
\draw (50, 40) node (34) [overlay] {};
\draw (55, 50) node (35) [overlay] {};
\draw (54, 10) node (36) [overlay] {};
\draw (60, 15) node (37) [overlay] {};
\draw (47, 66) node (38) [overlay] {};
\draw (30, 60) node (39) [overlay] {};
\draw (30, 50) node (40) [overlay] {};
\draw (12, 17) node (41) [overlay] {};
\draw (15, 14) node (42) [overlay] {};
\draw (16, 19) node (43) [overlay] {};
\draw (21, 48) node (44) [overlay] {};
\draw (50, 30) node (45) [overlay] {};
\draw (51, 42) node (46) [overlay] {};
\draw (50, 15) node (47) [overlay] {};
\draw (48, 21) node (48) [overlay] {};
\draw (12, 38) node (49) [overlay] {};
\draw (37, 52) node (50) [overlay] {};
\draw (49, 49) node (51) [overlay] {};
\draw (52, 64) node (52) [overlay] {};
\draw (20, 26) node (53) [overlay] {};
\draw (40, 30) node (54) [overlay] {};
\draw (21, 47) node (55) [overlay] {};
\draw (17, 63) node (56) [overlay] {};
\draw (31, 62) node (57) [overlay] {};
\draw (52, 33) node (58) [overlay] {};
\draw (51, 21) node (59) [overlay] {};
\draw (42, 41) node (60) [overlay] {};
\draw (31, 32) node (61) [overlay] {};
\draw (5, 25) node (62) [overlay] {};
\draw (12, 42) node (63) [overlay] {};
\draw (36, 16) node (64) [overlay] {};
\draw (52, 41) node (65) [overlay] {};
\draw (27, 23) node (66) [overlay] {};
\draw (17, 33) node (67) [overlay] {};
\draw (13, 13) node (68) [overlay] {};
\draw (57, 58) node (69) [overlay] {};
\draw (62, 42) node (70) [overlay] {};
\draw (42, 57) node (71) [overlay] {};
\draw (16, 57) node (72) [overlay] {};
\draw (8, 52) node (73) [overlay] {};
\draw (7, 38) node (74) [overlay] {};
\draw (27, 68) node (75) [overlay] {};
\draw (30, 48) node (76) [overlay] {};
\draw (43, 67) node (77) [overlay] {};
\draw (58, 48) node (78) [overlay] {};
\draw (58, 27) node (79) [overlay] {};
\draw (37, 69) node (80) [overlay] {};
\draw (38, 46) node (81) [overlay] {};
\draw (46, 10) node (82) [overlay] {};
\draw (61, 33) node (83) [overlay] {};
\draw (62, 63) node (84) [overlay] {};
\draw (63, 69) node (85) [overlay] {};
\draw (32, 22) node (86) [overlay] {};
\draw (45, 35) node (87) [overlay] {};
\draw (59, 15) node (88) [overlay] {};
\draw (5, 6) node (89) [overlay] {};
\draw (10, 17) node (90) [overlay] {};
\draw (21, 10) node (91) [overlay] {};
\draw (5, 64) node (92) [overlay] {};
\draw (30, 15) node (93) [overlay] {};
\draw (39, 10) node (94) [overlay] {};
\draw (32, 39) node (95) [overlay] {};
\draw (25, 32) node (96) [overlay] {};
\draw (25, 55) node (97) [overlay] {};
\draw (48, 28) node (98) [overlay] {};
\draw (56, 37) node (99) [overlay] {};
\draw (41, 49) node (100) [overlay] {};
\draw (35, 17) node (101) [overlay] {};
\draw (55, 45) node (102) [overlay] {};
\draw (55, 20) node (103) [overlay] {};
\draw (15, 30) node (104) [overlay] {};
\draw (25, 30) node (105) [overlay] {};
\draw (20, 50) node (106) [overlay] {};
\draw (10, 43) node (107) [overlay] {};
\draw (55, 60) node (108) [overlay] {};
\draw (30, 60) node (109) [overlay] {};
\draw (20, 65) node (110) [overlay] {};
\draw (50, 35) node (111) [overlay] {};
\draw (30, 25) node (112) [overlay] {};
\draw (15, 10) node (113) [overlay] {};
\draw (30, 5) node (114) [overlay] {};
\draw (10, 20) node (115) [overlay] {};
\draw (5, 30) node (116) [overlay] {};
\draw (20, 40) node (117) [overlay] {};
\draw (15, 60) node (118) [overlay] {};
\draw (45, 65) node (119) [overlay] {};
\draw (45, 20) node (120) [overlay] {};
\draw (45, 10) node (121) [overlay] {};
\draw (55, 5) node (122) [overlay] {};
\draw (65, 35) node (123) [overlay] {};
\draw (65, 20) node (124) [overlay] {};
\draw (45, 30) node (125) [overlay] {};
\draw (35, 40) node (126) [overlay] {};
\draw (41, 37) node (127) [overlay] {};
\draw (64, 42) node (128) [overlay] {};
\draw (40, 60) node (129) [overlay] {};
\draw (31, 52) node (130) [overlay] {};
\draw (35, 69) node (131) [overlay] {};
\draw (53, 52) node (132) [overlay] {};
\draw (65, 55) node (133) [overlay] {};
\draw (63, 65) node (134) [overlay] {};
\draw (2, 60) node (135) [overlay] {};
\draw (20, 20) node (136) [overlay] {};
\draw (5, 5) node (137) [overlay] {};
\draw (60, 12) node (138) [overlay] {};
\draw (40, 25) node (139) [overlay] {};
\draw (42, 7) node (140) [overlay] {};
\draw (24, 12) node (141) [overlay] {};
\draw (23, 3) node (142) [overlay] {};
\draw (11, 14) node (143) [overlay] {};
\draw (6, 38) node (144) [overlay] {};
\draw (2, 48) node (145) [overlay] {};
\draw (8, 56) node (146) [overlay] {};
\draw (13, 52) node (147) [overlay] {};
\draw (6, 68) node (148) [overlay] {};
\draw (47, 47) node (149) [overlay] {};
\draw (49, 58) node (150) [overlay] {};
\draw (27, 43) node (151) [overlay] {};
\draw (37, 31) node (152) [overlay] {};
\draw (57, 29) node (153) [overlay] {};
\draw (63, 23) node (154) [overlay] {};
\draw (53, 12) node (155) [overlay] {};
\draw (32, 12) node (156) [overlay] {};
\draw (36, 26) node (157) [overlay] {};
\draw (21, 24) node (158) [overlay] {};
\draw (17, 34) node (159) [overlay] {};
\draw (12, 24) node (160) [overlay] {};
\draw (24, 58) node (161) [overlay] {};
\draw (27, 69) node (162) [overlay] {};
\draw (15, 77) node (163) [overlay] {};
\draw (62, 77) node (164) [overlay] {};
\draw (49, 73) node (165) [overlay] {};
\draw (67, 5) node (166) [overlay] {};
\draw (56, 39) node (167) [overlay] {};
\draw (37, 47) node (168) [overlay] {};
\draw (37, 56) node (169) [overlay] {};
\draw (57, 68) node (170) [overlay] {};
\draw (47, 16) node (171) [overlay] {};
\draw (44, 17) node (172) [overlay] {};
\draw (46, 13) node (173) [overlay] {};
\draw (49, 11) node (174) [overlay] {};
\draw (49, 42) node (175) [overlay] {};
\draw (53, 43) node (176) [overlay] {};
\draw (61, 52) node (177) [overlay] {};
\draw (57, 48) node (178) [overlay] {};
\draw (56, 37) node (179) [overlay] {};
\draw (55, 54) node (180) [overlay] {};
\draw (15, 47) node (181) [overlay] {};
\draw (14, 37) node (182) [overlay] {};
\draw (11, 31) node (183) [overlay] {};
\draw (16, 22) node (184) [overlay] {};
\draw (4, 18) node (185) [overlay] {};
\draw (28, 18) node (186) [overlay] {};
\draw (26, 52) node (187) [overlay] {};
\draw (26, 35) node (188) [overlay] {};
\draw (31, 67) node (189) [overlay] {};
\draw (15, 19) node (190) [overlay] {};
\draw (22, 22) node (191) [overlay] {};
\draw (18, 24) node (192) [overlay] {};
\draw (26, 27) node (193) [overlay] {};
\draw (25, 24) node (194) [overlay] {};
\draw (22, 27) node (195) [overlay] {};
\draw (25, 21) node (196) [overlay] {};
\draw (19, 21) node (197) [overlay] {};
\draw (20, 26) node (198) [overlay] {};
\draw (18, 18) node (199) [overlay] {};
\draw (0) -- (6) [very thick, color=red];
\draw (0) -- (127) [very thick, color=red];
\draw (1) -- (191) [very thick, color=red];
\draw (1) -- (196) [very thick, color=red];
\draw (3) -- (55) [very thick, color=red];
\draw (3) -- (117) [very thick, color=red];
\draw (4) -- (87) [very thick, color=red];
\draw (4) -- (111) [very thick, color=red];
\draw (6) -- (61) [very thick, color=red];
\draw (8) -- (102) [very thick, color=red];
\draw (8) -- (178) [very thick, color=red];
\draw (9) -- (39) [very thick, color=red];
\draw (9) -- (161) [very thick, color=red];
\draw (10) -- (77) [very thick, color=red];
\draw (10) -- (80) [very thick, color=red];
\draw (11) -- (52) [very thick, color=red];
\draw (11) -- (108) [very thick, color=red];
\draw (16) -- (117) [very thick, color=red];
\draw (16) -- (159) [very thick, color=red];
\draw (27) -- (58) [very thick, color=red];
\draw (27) -- (99) [very thick, color=red];
\draw (32) -- (97) [very thick, color=red];
\draw (32) -- (106) [very thick, color=red];
\draw (33) -- (105) [very thick, color=red];
\draw (33) -- (193) [very thick, color=red];
\draw (34) -- (65) [very thick, color=red];
\draw (34) -- (175) [very thick, color=red];
\draw (35) -- (132) [very thick, color=red];
\draw (35) -- (178) [very thick, color=red];
\draw (38) -- (52) [very thick, color=red];
\draw (38) -- (119) [very thick, color=red];
\draw (39) -- (109) [very thick, color=red];
\draw (43) -- (184) [very thick, color=red];
\draw (43) -- (199) [very thick, color=red];
\draw (44) -- (55) [very thick, color=red];
\draw (44) -- (106) [very thick, color=red];
\draw (46) -- (175) [very thick, color=red];
\draw (46) -- (176) [very thick, color=red];
\draw (53) -- (158) [very thick, color=red];
\draw (53) -- (198) [very thick, color=red];
\draw (57) -- (109) [very thick, color=red];
\draw (57) -- (131) [very thick, color=red];
\draw (58) -- (111) [very thick, color=red];
\draw (61) -- (96) [very thick, color=red];
\draw (65) -- (167) [very thick, color=red];
\draw (67) -- (104) [very thick, color=red];
\draw (67) -- (159) [very thick, color=red];
\draw (69) -- (108) [very thick, color=red];
\draw (69) -- (180) [very thick, color=red];
\draw (77) -- (119) [very thick, color=red];
\draw (80) -- (131) [very thick, color=red];
\draw (87) -- (127) [very thick, color=red];
\draw (96) -- (105) [very thick, color=red];
\draw (97) -- (161) [very thick, color=red];
\draw (99) -- (179) [very thick, color=red];
\draw (102) -- (176) [very thick, color=red];
\draw (104) -- (198) [very thick, color=red];
\draw (132) -- (180) [very thick, color=red];
\draw (136) -- (191) [very thick, color=red];
\draw (136) -- (197) [very thick, color=red];
\draw (158) -- (192) [very thick, color=red];
\draw (167) -- (179) [very thick, color=red];
\draw (184) -- (192) [very thick, color=red];
\draw (193) -- (194) [very thick, color=red];
\draw (194) -- (196) [very thick, color=red];
\draw (197) -- (199) [very thick, color=red];
\draw (2) -- (198) [dashed, very thin, color=teal];
\draw (5) -- (105) [dashed, very thin, color=teal];
\draw (7) -- (57) [dashed, very thin, color=teal];
\draw (12) -- (108) [dashed, very thin, color=teal];
\draw (13) -- (65) [dashed, very thin, color=teal];
\draw (14) -- (34) [dashed, very thin, color=teal];
\draw (15) -- (178) [dashed, very thin, color=teal];
\draw (17) -- (43) [dashed, very thin, color=teal];
\draw (18) -- (184) [dashed, very thin, color=teal];
\draw (19) -- (105) [dashed, very thin, color=teal];
\draw (20) -- (58) [dashed, very thin, color=teal];
\draw (21) -- (8) [dashed, very thin, color=teal];
\draw (22) -- (104) [dashed, very thin, color=teal];
\draw (23) -- (38) [dashed, very thin, color=teal];
\draw (24) -- (77) [dashed, very thin, color=teal];
\draw (25) -- (199) [dashed, very thin, color=teal];
\draw (26) -- (158) [dashed, very thin, color=teal];
\draw (28) -- (97) [dashed, very thin, color=teal];
\draw (29) -- (57) [dashed, very thin, color=teal];
\draw (30) -- (196) [dashed, very thin, color=teal];
\draw (31) -- (179) [dashed, very thin, color=teal];
\draw (36) -- (194) [dashed, very thin, color=teal];
\draw (37) -- (198) [dashed, very thin, color=teal];
\draw (40) -- (0) [dashed, very thin, color=teal];
\draw (41) -- (27) [dashed, very thin, color=teal];
\draw (42) -- (108) [dashed, very thin, color=teal];
\draw (45) -- (194) [dashed, very thin, color=teal];
\draw (47) -- (38) [dashed, very thin, color=teal];
\draw (48) -- (33) [dashed, very thin, color=teal];
\draw (49) -- (176) [dashed, very thin, color=teal];
\draw (50) -- (158) [dashed, very thin, color=teal];
\draw (51) -- (9) [dashed, very thin, color=teal];
\draw (54) -- (32) [dashed, very thin, color=teal];
\draw (56) -- (55) [dashed, very thin, color=teal];
\draw (59) -- (1) [dashed, very thin, color=teal];
\draw (60) -- (167) [dashed, very thin, color=teal];
\draw (62) -- (109) [dashed, very thin, color=teal];
\draw (63) -- (80) [dashed, very thin, color=teal];
\draw (64) -- (192) [dashed, very thin, color=teal];
\draw (66) -- (10) [dashed, very thin, color=teal];
\draw (68) -- (6) [dashed, very thin, color=teal];
\draw (70) -- (119) [dashed, very thin, color=teal];
\draw (71) -- (131) [dashed, very thin, color=teal];
\draw (72) -- (179) [dashed, very thin, color=teal];
\draw (73) -- (99) [dashed, very thin, color=teal];
\draw (74) -- (80) [dashed, very thin, color=teal];
\draw (75) -- (161) [dashed, very thin, color=teal];
\draw (76) -- (117) [dashed, very thin, color=teal];
\draw (78) -- (193) [dashed, very thin, color=teal];
\draw (79) -- (111) [dashed, very thin, color=teal];
\draw (81) -- (167) [dashed, very thin, color=teal];
\draw (82) -- (61) [dashed, very thin, color=teal];
\draw (83) -- (97) [dashed, very thin, color=teal];
\draw (84) -- (184) [dashed, very thin, color=teal];
\draw (85) -- (34) [dashed, very thin, color=teal];
\draw (86) -- (10) [dashed, very thin, color=teal];
\draw (88) -- (4) [dashed, very thin, color=teal];
\draw (89) -- (175) [dashed, very thin, color=teal];
\draw (90) -- (87) [dashed, very thin, color=teal];
\draw (91) -- (132) [dashed, very thin, color=teal];
\draw (92) -- (1) [dashed, very thin, color=teal];
\draw (93) -- (180) [dashed, very thin, color=teal];
\draw (94) -- (43) [dashed, very thin, color=teal];
\draw (95) -- (39) [dashed, very thin, color=teal];
\draw (98) -- (197) [dashed, very thin, color=teal];
\draw (100) -- (127) [dashed, very thin, color=teal];
\draw (101) -- (46) [dashed, very thin, color=teal];
\draw (103) -- (104) [dashed, very thin, color=teal];
\draw (107) -- (69) [dashed, very thin, color=teal];
\draw (110) -- (9) [dashed, very thin, color=teal];
\draw (112) -- (199) [dashed, very thin, color=teal];
\draw (113) -- (69) [dashed, very thin, color=teal];
\draw (114) -- (178) [dashed, very thin, color=teal];
\draw (115) -- (3) [dashed, very thin, color=teal];
\draw (116) -- (8) [dashed, very thin, color=teal];
\draw (118) -- (102) [dashed, very thin, color=teal];
\draw (120) -- (77) [dashed, very thin, color=teal];
\draw (121) -- (52) [dashed, very thin, color=teal];
\draw (122) -- (53) [dashed, very thin, color=teal];
\draw (123) -- (16) [dashed, very thin, color=teal];
\draw (124) -- (161) [dashed, very thin, color=teal];
\draw (125) -- (6) [dashed, very thin, color=teal];
\draw (126) -- (35) [dashed, very thin, color=teal];
\draw (128) -- (11) [dashed, very thin, color=teal];
\draw (129) -- (175) [dashed, very thin, color=teal];
\draw (130) -- (61) [dashed, very thin, color=teal];
\draw (133) -- (131) [dashed, very thin, color=teal];
\draw (134) -- (55) [dashed, very thin, color=teal];
\draw (135) -- (191) [dashed, very thin, color=teal];
\draw (137) -- (3) [dashed, very thin, color=teal];
\draw (138) -- (136) [dashed, very thin, color=teal];
\draw (139) -- (65) [dashed, very thin, color=teal];
\draw (140) -- (96) [dashed, very thin, color=teal];
\draw (141) -- (44) [dashed, very thin, color=teal];
\draw (142) -- (46) [dashed, very thin, color=teal];
\draw (143) -- (0) [dashed, very thin, color=teal];
\draw (144) -- (27) [dashed, very thin, color=teal];
\draw (146) -- (52) [dashed, very thin, color=teal];
\draw (147) -- (58) [dashed, very thin, color=teal];
\draw (148) -- (99) [dashed, very thin, color=teal];
\draw (149) -- (39) [dashed, very thin, color=teal];
\draw (150) -- (11) [dashed, very thin, color=teal];
\draw (151) -- (197) [dashed, very thin, color=teal];
\draw (152) -- (96) [dashed, very thin, color=teal];
\draw (153) -- (33) [dashed, very thin, color=teal];
\draw (154) -- (191) [dashed, very thin, color=teal];
\draw (155) -- (16) [dashed, very thin, color=teal];
\draw (156) -- (67) [dashed, very thin, color=teal];
\draw (157) -- (119) [dashed, very thin, color=teal];
\draw (160) -- (109) [dashed, very thin, color=teal];
\draw (162) -- (53) [dashed, very thin, color=teal];
\draw (163) -- (192) [dashed, very thin, color=teal];
\draw (164) -- (176) [dashed, very thin, color=teal];
\draw (165) -- (180) [dashed, very thin, color=teal];
\draw (166) -- (32) [dashed, very thin, color=teal];
\draw (168) -- (136) [dashed, very thin, color=teal];
\draw (169) -- (102) [dashed, very thin, color=teal];
\draw (170) -- (196) [dashed, very thin, color=teal];
\draw (171) -- (111) [dashed, very thin, color=teal];
\draw (172) -- (193) [dashed, very thin, color=teal];
\draw (173) -- (159) [dashed, very thin, color=teal];
\draw (174) -- (35) [dashed, very thin, color=teal];
\draw (177) -- (117) [dashed, very thin, color=teal];
\draw (181) -- (67) [dashed, very thin, color=teal];
\draw (182) -- (106) [dashed, very thin, color=teal];
\draw (183) -- (127) [dashed, very thin, color=teal];
\draw (185) -- (87) [dashed, very thin, color=teal];
\draw (186) -- (106) [dashed, very thin, color=teal];
\draw (187) -- (132) [dashed, very thin, color=teal];
\draw (188) -- (44) [dashed, very thin, color=teal];
\draw (189) -- (159) [dashed, very thin, color=teal];
\draw (195) -- (4) [dashed, very thin, color=teal];

\end{tikzpicture}
\begin{center}An optimal tour with $L=194.11$.\end{center}
\end{adjustbox}\hfill
\begin{adjustbox}{valign=t,minipage={.45\textwidth}}
\centering
\begin{tikzpicture}[scale=0.09]
\tikzstyle{every node}=[draw,circle,fill=white,minimum size=4pt,
                            inner sep=0pt]
\draw (35, 35) node (0) [overlay, color=black] {};
\draw (22, 22) node (1) [overlay] {};
\draw (36, 26) node (2) [overlay] {};
\draw (21, 45) node (3) [overlay] {};
\draw (45, 35) node (4) [overlay] {};
\draw (55, 20) node (5) [overlay] {};
\draw (33, 34) node (6) [overlay] {};
\draw (50, 50) node (7) [overlay] {};
\draw (55, 45) node (8) [overlay] {};
\draw (26, 59) node (9) [overlay] {};
\draw (40, 66) node (10) [overlay] {};
\draw (55, 65) node (11) [overlay] {};
\draw (35, 51) node (12) [overlay] {};
\draw (62, 35) node (13) [overlay] {};
\draw (62, 57) node (14) [overlay] {};
\draw (62, 24) node (15) [overlay] {};
\draw (21, 36) node (16) [overlay] {};
\draw (33, 44) node (17) [overlay] {};
\draw (9, 56) node (18) [overlay] {};
\draw (62, 48) node (19) [overlay] {};
\draw (66, 14) node (20) [overlay] {};
\draw (44, 13) node (21) [overlay] {};
\draw (26, 13) node (22) [overlay] {};
\draw (11, 28) node (23) [overlay] {};
\draw (7, 43) node (24) [overlay] {};
\draw (17, 64) node (25) [overlay] {};
\draw (41, 46) node (26) [overlay] {};
\draw (55, 34) node (27) [overlay] {};
\draw (35, 16) node (28) [overlay] {};
\draw (52, 26) node (29) [overlay] {};
\draw (43, 26) node (30) [overlay] {};
\draw (31, 76) node (31) [overlay] {};
\draw (22, 53) node (32) [overlay] {};
\draw (26, 29) node (33) [overlay] {};
\draw (50, 40) node (34) [overlay] {};
\draw (55, 50) node (35) [overlay] {};
\draw (54, 10) node (36) [overlay] {};
\draw (60, 15) node (37) [overlay] {};
\draw (47, 66) node (38) [overlay] {};
\draw (30, 60) node (39) [overlay] {};
\draw (30, 50) node (40) [overlay] {};
\draw (12, 17) node (41) [overlay] {};
\draw (15, 14) node (42) [overlay] {};
\draw (16, 19) node (43) [overlay] {};
\draw (21, 48) node (44) [overlay] {};
\draw (50, 30) node (45) [overlay] {};
\draw (51, 42) node (46) [overlay] {};
\draw (50, 15) node (47) [overlay] {};
\draw (48, 21) node (48) [overlay] {};
\draw (12, 38) node (49) [overlay] {};
\draw (37, 52) node (50) [overlay] {};
\draw (49, 49) node (51) [overlay] {};
\draw (52, 64) node (52) [overlay] {};
\draw (20, 26) node (53) [overlay] {};
\draw (40, 30) node (54) [overlay] {};
\draw (21, 47) node (55) [overlay] {};
\draw (17, 63) node (56) [overlay] {};
\draw (31, 62) node (57) [overlay] {};
\draw (52, 33) node (58) [overlay] {};
\draw (51, 21) node (59) [overlay] {};
\draw (42, 41) node (60) [overlay] {};
\draw (31, 32) node (61) [overlay] {};
\draw (5, 25) node (62) [overlay] {};
\draw (12, 42) node (63) [overlay] {};
\draw (36, 16) node (64) [overlay] {};
\draw (52, 41) node (65) [overlay] {};
\draw (27, 23) node (66) [overlay] {};
\draw (17, 33) node (67) [overlay] {};
\draw (13, 13) node (68) [overlay] {};
\draw (57, 58) node (69) [overlay] {};
\draw (62, 42) node (70) [overlay] {};
\draw (42, 57) node (71) [overlay] {};
\draw (16, 57) node (72) [overlay] {};
\draw (8, 52) node (73) [overlay] {};
\draw (7, 38) node (74) [overlay] {};
\draw (27, 68) node (75) [overlay] {};
\draw (30, 48) node (76) [overlay] {};
\draw (43, 67) node (77) [overlay] {};
\draw (58, 48) node (78) [overlay] {};
\draw (58, 27) node (79) [overlay] {};
\draw (37, 69) node (80) [overlay] {};
\draw (38, 46) node (81) [overlay] {};
\draw (46, 10) node (82) [overlay] {};
\draw (61, 33) node (83) [overlay] {};
\draw (62, 63) node (84) [overlay] {};
\draw (63, 69) node (85) [overlay] {};
\draw (32, 22) node (86) [overlay] {};
\draw (45, 35) node (87) [overlay] {};
\draw (59, 15) node (88) [overlay] {};
\draw (5, 6) node (89) [overlay] {};
\draw (10, 17) node (90) [overlay] {};
\draw (21, 10) node (91) [overlay] {};
\draw (5, 64) node (92) [overlay] {};
\draw (30, 15) node (93) [overlay] {};
\draw (39, 10) node (94) [overlay] {};
\draw (32, 39) node (95) [overlay] {};
\draw (25, 32) node (96) [overlay] {};
\draw (25, 55) node (97) [overlay] {};
\draw (48, 28) node (98) [overlay] {};
\draw (56, 37) node (99) [overlay] {};
\draw (41, 49) node (100) [overlay] {};
\draw (35, 17) node (101) [overlay] {};
\draw (55, 45) node (102) [overlay] {};
\draw (55, 20) node (103) [overlay] {};
\draw (15, 30) node (104) [overlay] {};
\draw (25, 30) node (105) [overlay] {};
\draw (20, 50) node (106) [overlay] {};
\draw (10, 43) node (107) [overlay] {};
\draw (55, 60) node (108) [overlay] {};
\draw (30, 60) node (109) [overlay] {};
\draw (20, 65) node (110) [overlay] {};
\draw (50, 35) node (111) [overlay] {};
\draw (30, 25) node (112) [overlay] {};
\draw (15, 10) node (113) [overlay] {};
\draw (30, 5) node (114) [overlay] {};
\draw (10, 20) node (115) [overlay] {};
\draw (5, 30) node (116) [overlay] {};
\draw (20, 40) node (117) [overlay] {};
\draw (15, 60) node (118) [overlay] {};
\draw (45, 65) node (119) [overlay] {};
\draw (45, 20) node (120) [overlay] {};
\draw (45, 10) node (121) [overlay] {};
\draw (55, 5) node (122) [overlay] {};
\draw (65, 35) node (123) [overlay] {};
\draw (65, 20) node (124) [overlay] {};
\draw (45, 30) node (125) [overlay] {};
\draw (35, 40) node (126) [overlay] {};
\draw (41, 37) node (127) [overlay] {};
\draw (64, 42) node (128) [overlay] {};
\draw (40, 60) node (129) [overlay] {};
\draw (31, 52) node (130) [overlay] {};
\draw (35, 69) node (131) [overlay] {};
\draw (53, 52) node (132) [overlay] {};
\draw (65, 55) node (133) [overlay] {};
\draw (63, 65) node (134) [overlay] {};
\draw (2, 60) node (135) [overlay] {};
\draw (20, 20) node (136) [overlay] {};
\draw (5, 5) node (137) [overlay] {};
\draw (60, 12) node (138) [overlay] {};
\draw (40, 25) node (139) [overlay] {};
\draw (42, 7) node (140) [overlay] {};
\draw (24, 12) node (141) [overlay] {};
\draw (23, 3) node (142) [overlay] {};
\draw (11, 14) node (143) [overlay] {};
\draw (6, 38) node (144) [overlay] {};
\draw (2, 48) node (145) [overlay] {};
\draw (8, 56) node (146) [overlay] {};
\draw (13, 52) node (147) [overlay] {};
\draw (6, 68) node (148) [overlay] {};
\draw (47, 47) node (149) [overlay] {};
\draw (49, 58) node (150) [overlay] {};
\draw (27, 43) node (151) [overlay] {};
\draw (37, 31) node (152) [overlay] {};
\draw (57, 29) node (153) [overlay] {};
\draw (63, 23) node (154) [overlay] {};
\draw (53, 12) node (155) [overlay] {};
\draw (32, 12) node (156) [overlay] {};
\draw (36, 26) node (157) [overlay] {};
\draw (21, 24) node (158) [overlay] {};
\draw (17, 34) node (159) [overlay] {};
\draw (12, 24) node (160) [overlay] {};
\draw (24, 58) node (161) [overlay] {};
\draw (27, 69) node (162) [overlay] {};
\draw (15, 77) node (163) [overlay] {};
\draw (62, 77) node (164) [overlay] {};
\draw (49, 73) node (165) [overlay] {};
\draw (67, 5) node (166) [overlay] {};
\draw (56, 39) node (167) [overlay] {};
\draw (37, 47) node (168) [overlay] {};
\draw (37, 56) node (169) [overlay] {};
\draw (57, 68) node (170) [overlay] {};
\draw (47, 16) node (171) [overlay] {};
\draw (44, 17) node (172) [overlay] {};
\draw (46, 13) node (173) [overlay] {};
\draw (49, 11) node (174) [overlay] {};
\draw (49, 42) node (175) [overlay] {};
\draw (53, 43) node (176) [overlay] {};
\draw (61, 52) node (177) [overlay] {};
\draw (57, 48) node (178) [overlay] {};
\draw (56, 37) node (179) [overlay] {};
\draw (55, 54) node (180) [overlay] {};
\draw (15, 47) node (181) [overlay] {};
\draw (14, 37) node (182) [overlay] {};
\draw (11, 31) node (183) [overlay] {};
\draw (16, 22) node (184) [overlay] {};
\draw (4, 18) node (185) [overlay] {};
\draw (28, 18) node (186) [overlay] {};
\draw (26, 52) node (187) [overlay] {};
\draw (26, 35) node (188) [overlay] {};
\draw (31, 67) node (189) [overlay] {};
\draw (15, 19) node (190) [overlay] {};
\draw (22, 22) node (191) [overlay] {};
\draw (18, 24) node (192) [overlay] {};
\draw (26, 27) node (193) [overlay] {};
\draw (25, 24) node (194) [overlay] {};
\draw (22, 27) node (195) [overlay] {};
\draw (25, 21) node (196) [overlay] {};
\draw (19, 21) node (197) [overlay] {};
\draw (20, 26) node (198) [overlay] {};
\draw (18, 18) node (199) [overlay] {};
\draw (0) -- (6) [very thick, color=red];
\draw (0) -- (126) [very thick, color=red];
\draw (0) -- (152) [very thick, color=red];
\draw (1) -- (191) [very thick, color=red];
\draw (2) -- (157) [very thick, color=red];
\draw (3) -- (55) [very thick, color=red];
\draw (4) -- (87) [very thick, color=red];
\draw (6) -- (61) [very thick, color=red];
\draw (7) -- (51) [very thick, color=red];
\draw (7) -- (132) [very thick, color=red];
\draw (8) -- (102) [very thick, color=red];
\draw (9) -- (109) [very thick, color=red];
\draw (9) -- (161) [very thick, color=red];
\draw (17) -- (76) [very thick, color=red];
\draw (17) -- (126) [very thick, color=red];
\draw (27) -- (58) [very thick, color=red];
\draw (27) -- (179) [very thick, color=red];
\draw (32) -- (97) [very thick, color=red];
\draw (32) -- (106) [very thick, color=red];
\draw (33) -- (61) [very thick, color=red];
\draw (33) -- (105) [very thick, color=red];
\draw (33) -- (193) [very thick, color=red];
\draw (34) -- (175) [very thick, color=red];
\draw (35) -- (132) [very thick, color=red];
\draw (35) -- (178) [very thick, color=red];
\draw (39) -- (109) [very thick, color=red];
\draw (40) -- (76) [very thick, color=red];
\draw (40) -- (130) [very thick, color=red];
\draw (40) -- (187) [very thick, color=red];
\draw (41) -- (90) [very thick, color=red];
\draw (41) -- (190) [very thick, color=red];
\draw (43) -- (184) [very thick, color=red];
\draw (43) -- (190) [very thick, color=red];
\draw (43) -- (199) [very thick, color=red];
\draw (44) -- (55) [very thick, color=red];
\draw (44) -- (106) [very thick, color=red];
\draw (45) -- (58) [very thick, color=red];
\draw (45) -- (98) [very thick, color=red];
\draw (46) -- (65) [very thick, color=red];
\draw (46) -- (175) [very thick, color=red];
\draw (53) -- (198) [very thick, color=red];
\draw (54) -- (125) [very thick, color=red];
\draw (54) -- (152) [very thick, color=red];
\draw (57) -- (109) [very thick, color=red];
\draw (58) -- (111) [very thick, color=red];
\draw (65) -- (167) [very thick, color=red];
\draw (65) -- (176) [very thick, color=red];
\draw (78) -- (178) [very thick, color=red];
\draw (87) -- (111) [very thick, color=red];
\draw (96) -- (105) [very thick, color=red];
\draw (97) -- (161) [very thick, color=red];
\draw (97) -- (187) [very thick, color=red];
\draw (98) -- (125) [very thick, color=red];
\draw (99) -- (179) [very thick, color=red];
\draw (102) -- (176) [very thick, color=red];
\draw (102) -- (178) [very thick, color=red];
\draw (132) -- (180) [very thick, color=red];
\draw (152) -- (157) [very thick, color=red];
\draw (158) -- (191) [very thick, color=red];
\draw (158) -- (198) [very thick, color=red];
\draw (167) -- (179) [very thick, color=red];
\draw (184) -- (192) [very thick, color=red];
\draw (191) -- (194) [very thick, color=red];
\draw (192) -- (198) [very thick, color=red];
\draw (193) -- (194) [very thick, color=red];
\draw (5) -- (6) [dashed, very thin, color=teal];
\draw (10) -- (43) [dashed, very thin, color=teal];
\draw (11) -- (58) [dashed, very thin, color=teal];
\draw (12) -- (152) [dashed, very thin, color=teal];
\draw (13) -- (193) [dashed, very thin, color=teal];
\draw (14) -- (161) [dashed, very thin, color=teal];
\draw (15) -- (192) [dashed, very thin, color=teal];
\draw (16) -- (4) [dashed, very thin, color=teal];
\draw (18) -- (102) [dashed, very thin, color=teal];
\draw (19) -- (1) [dashed, very thin, color=teal];
\draw (20) -- (1) [dashed, very thin, color=teal];
\draw (21) -- (46) [dashed, very thin, color=teal];
\draw (22) -- (33) [dashed, very thin, color=teal];
\draw (23) -- (175) [dashed, very thin, color=teal];
\draw (24) -- (184) [dashed, very thin, color=teal];
\draw (25) -- (106) [dashed, very thin, color=teal];
\draw (26) -- (0) [dashed, very thin, color=teal];
\draw (28) -- (35) [dashed, very thin, color=teal];
\draw (29) -- (132) [dashed, very thin, color=teal];
\draw (30) -- (109) [dashed, very thin, color=teal];
\draw (31) -- (65) [dashed, very thin, color=teal];
\draw (36) -- (65) [dashed, very thin, color=teal];
\draw (37) -- (58) [dashed, very thin, color=teal];
\draw (38) -- (9) [dashed, very thin, color=teal];
\draw (42) -- (126) [dashed, very thin, color=teal];
\draw (47) -- (44) [dashed, very thin, color=teal];
\draw (48) -- (96) [dashed, very thin, color=teal];
\draw (49) -- (187) [dashed, very thin, color=teal];
\draw (50) -- (54) [dashed, very thin, color=teal];
\draw (52) -- (194) [dashed, very thin, color=teal];
\draw (56) -- (111) [dashed, very thin, color=teal];
\draw (59) -- (4) [dashed, very thin, color=teal];
\draw (60) -- (105) [dashed, very thin, color=teal];
\draw (62) -- (53) [dashed, very thin, color=teal];
\draw (63) -- (98) [dashed, very thin, color=teal];
\draw (64) -- (125) [dashed, very thin, color=teal];
\draw (66) -- (51) [dashed, very thin, color=teal];
\draw (67) -- (51) [dashed, very thin, color=teal];
\draw (68) -- (193) [dashed, very thin, color=teal];
\draw (69) -- (180) [dashed, very thin, color=teal];
\draw (70) -- (184) [dashed, very thin, color=teal];
\draw (71) -- (9) [dashed, very thin, color=teal];
\draw (72) -- (41) [dashed, very thin, color=teal];
\draw (73) -- (132) [dashed, very thin, color=teal];
\draw (74) -- (175) [dashed, very thin, color=teal];
\draw (75) -- (55) [dashed, very thin, color=teal];
\draw (77) -- (99) [dashed, very thin, color=teal];
\draw (79) -- (99) [dashed, very thin, color=teal];
\draw (80) -- (78) [dashed, very thin, color=teal];
\draw (81) -- (90) [dashed, very thin, color=teal];
\draw (82) -- (158) [dashed, very thin, color=teal];
\draw (83) -- (157) [dashed, very thin, color=teal];
\draw (84) -- (190) [dashed, very thin, color=teal];
\draw (85) -- (167) [dashed, very thin, color=teal];
\draw (86) -- (130) [dashed, very thin, color=teal];
\draw (88) -- (106) [dashed, very thin, color=teal];
\draw (89) -- (46) [dashed, very thin, color=teal];
\draw (91) -- (190) [dashed, very thin, color=teal];
\draw (92) -- (126) [dashed, very thin, color=teal];
\draw (93) -- (102) [dashed, very thin, color=teal];
\draw (94) -- (57) [dashed, very thin, color=teal];
\draw (95) -- (87) [dashed, very thin, color=teal];
\draw (100) -- (34) [dashed, very thin, color=teal];
\draw (101) -- (43) [dashed, very thin, color=teal];
\draw (103) -- (40) [dashed, very thin, color=teal];
\draw (104) -- (167) [dashed, very thin, color=teal];
\draw (107) -- (2) [dashed, very thin, color=teal];
\draw (108) -- (32) [dashed, very thin, color=teal];
\draw (110) -- (125) [dashed, very thin, color=teal];
\draw (112) -- (32) [dashed, very thin, color=teal];
\draw (113) -- (90) [dashed, very thin, color=teal];
\draw (114) -- (0) [dashed, very thin, color=teal];
\draw (115) -- (78) [dashed, very thin, color=teal];
\draw (116) -- (6) [dashed, very thin, color=teal];
\draw (117) -- (179) [dashed, very thin, color=teal];
\draw (118) -- (176) [dashed, very thin, color=teal];
\draw (119) -- (53) [dashed, very thin, color=teal];
\draw (120) -- (3) [dashed, very thin, color=teal];
\draw (121) -- (27) [dashed, very thin, color=teal];
\draw (122) -- (180) [dashed, very thin, color=teal];
\draw (123) -- (35) [dashed, very thin, color=teal];
\draw (124) -- (34) [dashed, very thin, color=teal];
\draw (127) -- (109) [dashed, very thin, color=teal];
\draw (128) -- (130) [dashed, very thin, color=teal];
\draw (129) -- (157) [dashed, very thin, color=teal];
\draw (131) -- (87) [dashed, very thin, color=teal];
\draw (133) -- (45) [dashed, very thin, color=teal];
\draw (134) -- (44) [dashed, very thin, color=teal];
\draw (135) -- (45) [dashed, very thin, color=teal];
\draw (136) -- (8) [dashed, very thin, color=teal];
\draw (137) -- (158) [dashed, very thin, color=teal];
\draw (138) -- (17) [dashed, very thin, color=teal];
\draw (139) -- (191) [dashed, very thin, color=teal];
\draw (140) -- (2) [dashed, very thin, color=teal];
\draw (141) -- (194) [dashed, very thin, color=teal];
\draw (142) -- (179) [dashed, very thin, color=teal];
\draw (143) -- (187) [dashed, very thin, color=teal];
\draw (144) -- (8) [dashed, very thin, color=teal];
\draw (146) -- (199) [dashed, very thin, color=teal];
\draw (147) -- (198) [dashed, very thin, color=teal];
\draw (148) -- (105) [dashed, very thin, color=teal];
\draw (149) -- (57) [dashed, very thin, color=teal];
\draw (150) -- (97) [dashed, very thin, color=teal];
\draw (151) -- (198) [dashed, very thin, color=teal];
\draw (153) -- (61) [dashed, very thin, color=teal];
\draw (155) -- (199) [dashed, very thin, color=teal];
\draw (156) -- (7) [dashed, very thin, color=teal];
\draw (159) -- (61) [dashed, very thin, color=teal];
\draw (160) -- (161) [dashed, very thin, color=teal];
\draw (162) -- (178) [dashed, very thin, color=teal];
\draw (163) -- (191) [dashed, very thin, color=teal];
\draw (164) -- (39) [dashed, very thin, color=teal];
\draw (165) -- (111) [dashed, very thin, color=teal];
\draw (166) -- (178) [dashed, very thin, color=teal];
\draw (168) -- (55) [dashed, very thin, color=teal];
\draw (169) -- (96) [dashed, very thin, color=teal];
\draw (170) -- (3) [dashed, very thin, color=teal];
\draw (171) -- (192) [dashed, very thin, color=teal];
\draw (172) -- (40) [dashed, very thin, color=teal];
\draw (173) -- (33) [dashed, very thin, color=teal];
\draw (174) -- (41) [dashed, very thin, color=teal];
\draw (177) -- (176) [dashed, very thin, color=teal];
\draw (181) -- (76) [dashed, very thin, color=teal];
\draw (182) -- (27) [dashed, very thin, color=teal];
\draw (183) -- (17) [dashed, very thin, color=teal];
\draw (185) -- (54) [dashed, very thin, color=teal];
\draw (186) -- (76) [dashed, very thin, color=teal];
\draw (188) -- (98) [dashed, very thin, color=teal];
\draw (189) -- (152) [dashed, very thin, color=teal];
\draw (195) -- (7) [dashed, very thin, color=teal];
\draw (196) -- (39) [dashed, very thin, color=teal];
\draw (197) -- (97) [dashed, very thin, color=teal];

\end{tikzpicture}
\begin{center}An optimal tree with $L=173.00$.\end{center}
\end{adjustbox}
\caption{Optimal solutions (tour subgraph on the left and its tree counterpart on the right) of a further instance generated from p5, with $r_{v}=65.82$ and $c_{v}=2$ for all $v\in V$.}
\label{final_fig_22}
\end{figure}

\subsection{Impact of the symmetry-breaking inequalities}
\label{comp_sym}

The shape of the subgraphs also plays a role in the difficulty of the solution process. The numerical values of the TSP and the MST are quite similar for our instances, but there are usually (far) fewer feasible (symmetric) tours than trees. This fact leads to the tree subgraph case containing many more symmetric solutions on average than its tour counterpart, which in turn leads to higher computational times for the tree case.

Adding the symmetry-breaking inequalities \eqref{triangle_ineq} to the model results in a larger number of instances being solved within our time limit; however on average, there is an increase in the computational time for those instances that could be solved without inequalities \eqref{triangle_ineq} already, due to the larger size of the resulting model. We opted to prioritize the number of instances solved over the average computational times, and hence the choice to keep these inequalities in our model. Table \ref{table_triangles} shows how both the average computational time and the number of solved instances increase.

We observe an analogous behaviour for the tour subgraph case as well; however, in this case, the (slightly) higher number of solved instances does not seem to outweigh the considerable increase in the average computational times, as shown in Table \ref{table_2opt}. Hence for the tour subgraph case, the symmetry-breaking inequalities \eqref{tour_quad} are far less effective than their tree counterpart and should only be used when it is critical to maximize the number of instances solved to optimality.

\begin{table}[t]
\centering
\begin{tabular}{ccccc}
& without \eqref{triangle_ineq} && \multicolumn{2}{c}{with \eqref{triangle_ineq}}\\\cmidrule{2-2}\cmidrule{4-5}
& solved && solved & CPU\\\cmidrule[.1em]{1-5}
p5 & $22$ && $\textbf{28}$ & $+32\%$\\
X-n195-k11 & $20$ && $\textbf{30}$ & $+74\%$\\
kroa200 & $18$ && $\textbf{22}$ & $+4\%$
\end{tabular}
\caption{Impact of the symmetry-breaking inequalities on the tree BPCCSP\@. For the three largest base instances, we solve all the resulting instances such that $\nicefrac{q_{vw}}{p_{v}}=0.5$ via branch-and-cut. We report the number of solved instances (out of $36$) and the average increase in the computational time.}
\label{table_triangles}
\end{table}

\begin{table}[t]
\centering
\begin{tabular}{ccccc}
& without \eqref{tour_quad}&& \multicolumn{2}{c}{with \eqref{tour_quad}}\\\cmidrule{2-2}\cmidrule{4-5}
& solved && solved & CPU\\\cmidrule[.1em]{1-5}
p5 & $35$ && $\textbf{36}$ & $+284\%$\\
X-n195-k11 & $33$ && $\textbf{35}$ & $+103\%$\\
kroa200 & $\textbf{33}$ && $31$ & $+332\%$
\end{tabular}
\caption{Impact of the symmetry breaking inequalities on the tour BPCCSP\@. For the three largest base instances, we solve all the resulting instances such that $\nicefrac{q_{vw}}{p_{v}}=0.5$ via branch-and-cut. We report the number of solved instances (out of $36$) and the average increase in the computational time.}
\label{table_2opt}
\end{table}

\section{Conclusion}
We have introduced a new class of budgeted prize-collecting problems and developed a branch-and-cut framework for their exact solution. We have furthermore proposed a Benders decomposition for the case with independent prizes, as an alternative to the branch-and-cut for low values of the budget $L$. We have validated our algorithms on realistic-sized instances of the the tree and the tour case. Finally, we have identified novel symmetry-breaking inequalities for both tree and tour subgraphs.

As presented in this work, the Benders decomposition only solves instances with independent covering prizes. We believe that new interesting results could be obtained by incorporating \emph{dependent} covering prizes into the Benders decomposition, the challenge being not to slow down the convergence of the resulting Benders cuts.

Finally, for both the tour and the tree subgraph cases, the formulation can be extended by using a larger number of variables to account for an arc and a coverage lower-layer star at the same time. This idea opens the door to column generation approaches, which we believe promising for future investigation.

\section*{Acknowledgements}
The authors wish to thank Ivana Ljubi\'c (ESSEC Business School, Paris) and Jannik Matuschke (KU Leuven) for their valuable comments and suggestions on a preliminary draft of this work.


\begin{thebibliography}{10}
\expandafter\ifx\csname url\endcsname\relax
  \def\url#1{\texttt{#1}}\fi
\expandafter\ifx\csname urlprefix\endcsname\relax\def\urlprefix{URL }\fi
\expandafter\ifx\csname href\endcsname\relax
  \def\href#1#2{#2} \def\path#1{#1}\fi

\bibitem{lastmiledelivery}
capgemini.com, The last mile delivery challenge,
  \url{https://www.capgemini.com/wp-content/uploads/2019/01/Report-Digital-\%E2\%80\%93-Last-Mile-Delivery-Challenge1.pdf}
  (Accessed June 29, 2021) (2019).

\bibitem{ecommerce}
{statista.com}, Worldwide retail e-commerce sales,
  \url{https://www.statista.com/statistics/379046/worldwide-retail-e-commerce-sales/}
  (Accessed June 29, 2021) (2020).

\bibitem{Labbe04}
M.~Labb\'e, G.~Laporte, I.~{Rodr\'iguez Mart\'in}, J.J.~{Salazar Gonz\'alez}, The
  {R}ing {S}tar {P}roblem: {P}olyhedral analysis and exact algorithm, Networks
  43~(3) (2004) 177--189.

\bibitem{brustimes}
Bpost will no longer deliver all packages to your address, \url{}
  \url{https://www.brusselstimes.com/news/business/141627/bpost-will-no-longer-deliver-all-packages-to-your-address/}
  (Accessed June 29, 2021) (2020).

\bibitem{Najiazimi14}
Z.~Naji-Azimi, M.~Salari, The time constrained maximal covering salesman
  problem, Applied Mathematical Modelling 38~(15) (2014) 3945--3957.

\bibitem{Ozbaygin2016}
G.~Ozbaygin, H.~Yaman, O.~{Ekin Karasan}, Time constrained maximal covering
  salesman problem with weighted demands and partial coverage, Computers \&
  Operations Research 76 (2016) 226--237.

\bibitem{Leitner17}
M.~Leitner, I.~Ljubi\'c, J.~{Salazar Gonz\'alez}, M.~Sinnl, An algorithmic
  framework for the exact solution of tree-star problems, European Journal of
  Operational Research 261~(1) (2017) 54--66.

\bibitem{Leitner11}
M.~Leitner, G.~Raidl, Branch-and-{C}ut-and-{P}rice for {C}apacitated
  {C}onnected {F}acility {L}ocation, Journal of Mathematical Modelling and
  Algorithms 10~(3) (2011) 245--267.

\bibitem{Segev87}
A.~Segev, The node-weighted steiner tree problem, Networks 17~(1) (1987) 1--17.

\bibitem{Balas89}
E.~Balas, The prize collecting traveling salesman problem, Networks 19~(6)
  (1989) 621--636.

\bibitem{Feillet05}
D.~Feillet, P.~Dejax, M.~Gendreau, Traveling {S}alesman {P}roblems with
  {P}rofits, Transportation Science 39~(2) (2005) 188--205.

\bibitem{Dell'amico95}
M.~Dell'Amico, F.~Maffioli, P.~V{\"a}rbrand, On {P}rize-collecting {T}ours and
  the {A}symmetric {T}ravelling {S}alesman {P}roblem, International
  Transactions in Operational Research 2~(3) (1995) 297--308.

\bibitem{Ljubic06}
I.~Ljubi\'c, R.~Weiskircher, U.~Pferschy, G.~Klau, P.~Mutzel, M.~Fischetti, An
  {A}lgorithmic {F}ramework for the {E}xact {S}olution of the
  {P}rize-{C}ollecting {S}teiner {T}ree {P}roblem, Mathematical Programming 105
  (2006) 427--449.

\bibitem{Keller85}
C.~Keller, Multiobjective {R}outing {T}hrough {S}pace {A}nd {T}ime: {T}he {MVP}
  {A}nd {TDVRP} {P}roblems, {P}hD thesis, Department of Geography, University
  of Western Ontario (1985).

\bibitem{Goodchild88}
C.~Keller, M.~Goodchild, The {M}ultiobjective {V}ending {P}roblem: {A}
  {G}eneralization of the {T}ravelling {S}alesman {P}roblem, Environment and
  Planning B: Planning and Design 15~(4) (1988) 447--460.

\bibitem{Leitner13}
M.~Leitner, I.~Ljubi\'c, M.~Sinnl, Solving the bi-objective prize-collecting
  {S}teiner tree problem with the $\epsilon$-constraint method, Electronic
  Notes in Discrete Mathematics 41 (2013) 181--188.

\bibitem{Leitner15}
M.~Leitner, I.~Ljubi\'c, M.~Sinnl, A {C}omputational {S}tudy of {E}xact
  {A}pproaches for the {B}i-{O}bjective {P}rize-{C}ollecting {S}teiner {T}ree
  {P}roblem, INFORMS Journal on Computing 27~(1) (2015) 118--134.

\bibitem{Coene13}
S.~Coene, C.~Filippi, F.~Spieksma, E.~Stevanato, Balancing profits and costs on
  trees, Networks 61~(3) (2013) 200--211.

\bibitem{Haouari10}
M.~Haouari, S.~{Bhar Layeb}, H.~Sherali, Strength of three {MIP} formulations
  for the {P}rize {C}ollecting {S}teiner {T}ree {P}roblem with a {Q}uota
  {C}onstraint, Electronic Notes in Discrete Mathematics 36 (2010) 495--502,
  ISCO 2010 - International Symposium on Combinatorial Optimization.

\bibitem{Ljubic14}
I.~Ljubi\'c, P.~Putz, J.~{Salazar Gonz\'alez}, A {MIP}-based approach to solve
  the prize-collecting local access network design problem, European Journal of
  Operational Research 235~(3) (2014) 727--739.

\bibitem{Tsiligirides84}
T.~Tsiligirides, Heuristic {M}ethods {A}pplied to {O}rienteering, The Journal
  of the Operational Research Society 35~(9) (1984) 797--809.

\bibitem{Laporte90}
G.~Laporte, S.~Martello, The selective travelling salesman problem, Discrete
  Applied Mathematics 26~(2) (1990) 193--207.

\bibitem{Fischetti98}
M.~Fischetti, J.~{Salazar Gonz\'alez}, P.~Toth, Solving the {O}rienteering
  {P}roblem through {B}ranch-and-{C}ut, INFORMS Journal on Computing 10~(2)
  (1998) 133--148.

\bibitem{Johnson00}
D.~Johnson, M.~Minkoff, S.~Phillips, The {P}rize {C}ollecting {S}teiner {T}ree
  {P}roblem: {T}heory and {P}ractice, in: Proceedings of the Eleventh Annual
  ACM-SIAM Symposium on Discrete Algorithms, Society for Industrial and Applied
  Mathematics, 2000, pp. 760--769.

\bibitem{Paul20}
A.~Paul, D.~Freund, A.~Ferber, D.~Shmoys, D.~Williamson, Budgeted
  {P}rize-{C}ollecting {T}raveling {S}alesman and {M}inimum {S}panning {T}ree
  {P}roblems, Mathematics of Operations Research 45~(2) (2020) 576--590.

\bibitem{Costa09}
A.~Costa, J.~Cordeau, G.~Laporte, Models and branch-and-cut algorithms for the
  {S}teiner tree problem with revenues, budget and hop constraints, Networks
  53~(2) (2009) 141--159.

\bibitem{Sinnl16}
M.~Sinnl, I.~Ljubi\'c, A node-based layered graph approach for the {S}teiner
  tree problem with revenues, budget and hop-constraints, Mathematical
  Programming Computation 8~(4) (2016) 461--490.

\bibitem{Alvarez13}
E.~{{\'A}lvarez Miranda}, I.~Ljubi{\'{c}}, P.~Mutzel, The {M}aximum {W}eight
  {C}onnected {S}ubgraph {P}roblem, in: Facets of Combinatorial Optimization:
  Festschrift for Martin Gr{\"o}tschel, Springer, Berlin Heidelberg, 2013, pp.
  245--270.

\bibitem{Bateni12}
M.~Bateni, M.~Hajiaghayi, Euclidean {P}rize-{C}ollecting {S}teiner {F}orest,
  Algorithmica 62 (2012) 906--929.

\bibitem{chimani07}
M.~Chimani, M.~Kandyba, P.~Mutzel, A {N}ew {ILP} {F}ormulation for
  2-{R}oot-{C}onnected {P}rize-{C}ollecting {S}teiner {N}etworks, in:
  Algorithms -- ESA 2007, Springer, Berlin Heidelberg, 2007, pp. 681--692.

\bibitem{Golden08}
B.~Golden, S.~Raghavan, D.~Stanojevi\'c, The prize-collecting generalized
  minimum spanning tree problem, Journal of Heuristics 14 (2008) 69--93.

\bibitem{Prodon10}
A.~Prodon, S.~DeNegre, T.~Liebling, Locating leak detecting sensors in a water
  distribution network by solving prize-collecting {S}teiner arborescence
  problems, Mathematical Programming 124 (2010) 119--141.

\bibitem{Verter11}
V.~Verter, Uncapacitated and {C}apacitated {F}acility {L}ocation {P}roblems,
  in: H.~Eiselt, V.~Marianov (Eds.), Foundations of {L}ocation {A}nalysis,
  Springer, New York, 2011, pp. 25--37.

\bibitem{Current89}
J.~Current, D.~Schilling, The {C}overing {S}alesman {P}roblem, Transportation
  Science 23~(3) (1989) 208--213.

\bibitem{Gendreau97}
M.~Gendreau, G.~Laporte, F.~Semet, The {C}overing {T}our {P}roblem, Operations
  Research 45~(4) (1997) 568--576.

\bibitem{Grosso16}
A.~Grosso, F.~Salassa, W.~Vancroonenburg, Searching for a cycle with maximum
  coverage in undirected graphs, Optimization Letters 10~(7) (2016) 1493--1504.

\bibitem{Alvarez20}
E.~{{\'A}lvarez Miranda}, M.~Sinnl, A branch-and-cut algorithm for the maximum
  covering cycle problem, Annals of Operations Research 284~(2) (2020)
  487--499.

\bibitem{Labbe98}
M.~Labb\'e, G.~Laporte, I.~{Rodr\'iguez Mart\'in}, Path, {T}ree and {C}ycle
  {L}ocation, in: T.~Crainic, G.~Laporte (Eds.), Fleet {M}anagement and
  {L}ogistics, Springer, Boston, 1998, pp. 187--204.

\bibitem{Current94}
J.~Current, D.~Schilling, The median tour and maximal covering tour problems:
  {F}ormulations and heuristics, European Journal of Operational Research
  73~(1) (1994) 114--126.

\bibitem{Labbe05}
M.~Labb\'e, G.~Laporte, I.~{Rodr\'iguez Mart\'in}, J.~{Salazar Gonz\'alez},
  Locating median cycles in networks, European Journal of Operational Research
  160~(2) (2005) 457--470.

\bibitem{Lee98}
Y.~Lee, S.~Chiu, J.~S\'anchez, A branch and cut algorithm for the {S}teiner
  ring star problem, International Journal of Management Science 4~(1) (1998)
  21--34.

\bibitem{Baldacci07}
R.~Baldacci, M.~Dell'Amico, J.~{Salazar Gonz\'alez}, The {C}apacitated
  m-{R}ing-{S}tar {P}roblem, Operations Research 55~(6) (2007) 1147--1162.

\bibitem{Hoshino12}
E.~Hoshino, C.~{de Souza}, A branch-and-cut-and-price approach for the
  capacitated m-ring–star problem, Discrete Applied Mathematics 160~(18)
  (2012) 2728--2741.

\bibitem{Fouilhoux12}
P.~Fouilhoux, O.~{Ekin Karasan}, A.~{Ridha Mahjoub}, O.~Özkök, H.~Yaman,
  Survivability in {H}ierarchical {T}elecommunications {N}etworks, Networks
  59~(1) (2012) 37--58.

\bibitem{Lee96}
Y.~Lee, S.~Chiu, J.~Ryan, A {B}ranch and {C}ut {A}lgorithm for a {S}teiner
  {T}ree-{S}tar {P}roblem, INFORMS Journal on Computing 8~(3) (1996) 194--201.

\bibitem{Karget00}
D.~R. {Karget}, M.~{Minkoff}, Building {S}teiner trees with incomplete global
  knowledge, in: Proceedings 41st {A}nnual {S}ymposium on {F}oundations of
  {C}omputer {S}cience, 2000, pp. 613--623.

\bibitem{Gupta01}
A.~Gupta, J.~Kleinberg, A.~Kumar, R.~Rastogi, B.~Yener, Provisioning a
  {V}irtual {P}rivate {N}etwork: {A} {N}etwork {D}esign {P}roblem for
  {M}ulticommodity {F}low, in: Proceedings of the {T}hirty-{T}hird Annual {ACM}
  {S}ymposium on {T}heory of {C}omputing, STOC '01, 2001, p. 389–398.

\bibitem{Ljubic07}
I.~Ljubi\'c, A {H}ybrid {VNS} for {C}onnected {F}acility {L}ocation, in: T.~B.
  et~al. (Eds.), Hybrid {M}etaheuristics, Springer, Berlin, Heidelberg, 2007,
  pp. 157--169.

\bibitem{Gollowitzer11}
S.~Gollowitzer, I.~Ljubi\'c, {MIP} models for connected facility location: {A}
  theoretical and computational study, Computers \& Operations Research 38~(2)
  (2011) 435--449.

\bibitem{Gollowitzer13}
S.~Gollowitzer, B.~Gendron, I.~Ljubi\'c, A cutting plane algorithm for the
  {C}apacitated {C}onnected {F}acility {L}ocation {P}roblem, Computational
  Optimization and Applications 55~(3) (2013) 647--674.

\bibitem{trees_Wolsey}
T.~Magnanti, L.~Wolsey, Optimal trees, Handbook {in} {O}perations {R}esearch
  {and} {M}anagement {S}cience 7 (1995) 503–615.

\bibitem{boost}
boost.org, Boost c++ libraries, \url{https://www.boost.org/} (Accessed June 29,
  2021) (2020).

\bibitem{instances_Neumann}
neumann.hec.ca, {VRP} instances,
  \url{http://neumann.hec.ca/chairedistributique/data/vrp/old} (Accessed June
  29, 2021) (2020).

\bibitem{instances_Uchoa}
E.~Uchoa, D.~Pecin, A.~Pessoa, M.~Poggi, A.~Subramanian, T.~Vidal,
  \href{http://www.optimization-online.org/DB\_HTML/2014/10/4597.html}{New
  {B}enchmark {I}nstances for the {C}apacitated {V}ehicle {R}outing {P}roblem},
  Optimization Online,
\url{http://www.optimization-online.org/DB\_HTML/2014/10/4597.html} (Accessed June 29,
  2021) (2014).

\bibitem{Rein91}
G.~Reinelt, {TSPLIB}--{A} {T}raveling {S}alesman {P}roblem {L}ibrary, ORSA
  Journal on Computing 3~(4) (1991) 376--384.

\end{thebibliography}

\end{document}